\newtheorem{theorem}{Theorem}[section]
\newtheorem{lemma}[theorem]{Lemma}
\newtheorem{proposition}[theorem]{Proposition}
\newtheorem{corollary}[theorem]{Corollary}
\newtheorem{definition}[theorem]{Definition}
\theoremstyle{remark}
\theoremstyle{plain}
\numberwithin{equation}{section}
\newcommand{\wt}{\widetilde}
\def\AAA{{\cal A}}
\def\EEE{{\cal E}}
\def\HHH{{\cal H}}
\def\OOO{{\cal O}}
\def\MMMM{{\cal M}}
\def\PPP{{\cal P}}
\def\MMMM{\mathscr M}
\def\HHHH{\mathscr H}
\def\CCCC{\mathscr C}
\def\PPPP{\mathscr P}
\def\g{\gamma}
\def\G{\Gamma}
\def\De{\Delta}
\def\t{\tilde}
\def\R{\mbox{$\mathbb R$}}
\def\C{\mbox{$\mathbb C$}}
\def\D{\mathbb D}
\def\Q{\mathbb Q}
\def\Z{\mbox{$\mathbb Z$}}
\def\lv{ \left(\begin{matrix} }
	\def\rv{\end{matrix}\right)}
\def\cal{\mathcal}
\def\dw{{\dw}}
\def\ov{\overline}
\title{Polynomials in  molecules}
\author{Yan Gao}
\address{Yan Gao, School of Mathematical Sciences, Shenzhen University, Shenzhen 518061, China}
\email{gyan@szu.edu.cn}
\author{Jinsong Zeng}
\address{Jinsong Zeng, School of Mathematical Sciences, Shenzhen University, Shenzhen 518061, China}
\email{jinsongzeng@163.com}
\begin{document}

	\begin{abstract}
		This paper characterizes polynomials within molecules. We show that a geometrically finite polynomial of degree
		$d\geq2$ lies in a molecule if and only if all its critical points belong to maximal Fatou chains, and show that distinct molecules are mutually disjoint. We also establish a necessary and sufficient condition for subhyperbolic polynomials to be on the closures of bounded hyperbolic components.
	\end{abstract}
	
	\subjclass[2020]{Primary 37F20; Secondary 37F10}
	\keywords{}
	\thanks{
	}
	
	\maketitle
	
	
	\section{Introduction}

	Let $f: \mathbb{C} \to \mathbb{C}$ be a polynomial of degree $d \geq 2$. Its \emph{basin of infinity}
	\[
	U_f(\infty) = \{z \in \mathbb{C} : f^n(z) \to \infty \text{ as } n \to \infty\}
	\]
	is an unbounded domain satisfying $f^{-1}(U_f(\infty)) = U_f(\infty)$. The complement $K_f$ is the \emph{filled Julia set}, with boundary $J_f = \partial K_f$ defining the \emph{Julia set} and $ F_f=\mathbb{C} \setminus J_f$ the \emph{Fatou set}. The set of finite critical points of $f$ is denoted by $C_f$, and the set of \emph{postcritical points} is
	\[
	P_f = \overline{\bigcup_{n \geq 1} f^n(C_f)}.
	\]
	

	Let $\PPPP_d\simeq \C^{d-1}$ denote the space of monic and centered polynomials of degree $d\geq 2$.  The \emph{connectedness locus}
	\[\CCCC_d=\{f\in\PPPP_d: \text{the Julia set of $f$ is connected}\}\]
	is a canonical  subset of $\PPPP_d$. A central problem in complex dynamics is to study the bifurcation of connected Julia sets, which amounts to
	characterizing the structure of $\CCCC_d$.
	
	When $d=2$, the connectedness locus $\CCCC_2$ is the famous Mandelbrot set, which has been deeply studied, though the MLC (Mandelbrot locally connected) Conjecture remains open. For degrees $d\geq3$, the structure of $\CCCC_d$ becomes significantly more complex, yet it is known that $\CCCC_d$ is connected and cellular \cite{BH,La}.
	
	A class of building blocks for $\CCCC_d$ comprises \emph{bounded hyperbolic components}, i.e., the connected components of hyperbolic polynomials
	in $\CCCC_d$.
	Milnor \cite{M2} showed that each bounded hyperbolic component is a topological disk of real dimension $2d-2$. Characterizing bifurcations along these components, while simpler than the full problem, remains highly non-trivial. The subsets of $\CCCC_d$ that capture such bifurcation phenomena are referred as ``molecules''.
	
	\begin{definition}
		The set $\MMMM=\MMMM(\HHHH)$ is called a  \emph{(polynomial) molecule} if there exists  a bounded hyperbolic component $\HHHH$ such that
		\[\MMMM=\overline{\bigcup_{\HHHH'\rightsquigarrow \HHHH} \HHHH'},\]
		where the notation $\HHHH'\rightsquigarrow\HHHH$ indicates that  there exists a  sequence of  bounded hyperbolic components $\HHHH_1=\HHHH', \ldots,\HHHH_m=\HHHH $ such that $\overline{\HHHH_i}\cap \overline{\HHHH_{i+1}}\not=\emptyset$ for $i=1,\ldots,m-1$.
	\end{definition}
	
	The paper aims to describe the polynomials in molecules.  We focus particularly on the geometrically finite maps
	due to their key role in addressing the bifurcation problem.
	To state the main result, we first introduce some  notations.
	
	For any $f\in\CCCC_d$, there exists a unique conformal map $\chi_f: U_f(\infty)\to \C\setminus\overline{\D}$, called the \emph{B\"{o}ttcher coordinate}, satisfying
	$$\chi_f(z)/z\to 1\textup{ as }z\to\infty\textup{ and }\chi_f(z)^d=\chi_f\circ f(z).$$
	For any $\theta\in \R/\Z$, the set
	\[R_f(\theta)=\chi^{-1}_f(\{re^{2\pi \textbf{i}\theta}:r>1\})\]
	is called the \emph{external ray} of angle $\theta$. If the accumulation set of $R_f(\theta)$ on $J_f$ is a single point, we say that $R_f(\theta)$ \emph{lands} at this point, which is denoted by $\pi_f(\theta)$.
	By \cite[Theorem 18.10]{Mil},  the ray $R_f(\theta)$ lands when $\theta$ is rational.

	The \emph{rational lamination} $\lambda_\mathbb{Q}(f)\subseteq (\mathbb{Q}/\mathbb{Z})^2$ of $f$ consists of the pairs $(\theta_1,\theta_2)\in (\mathbb{Q}/\mathbb{Z})^2$ for which the external rays $R_f(\theta_1)$ and $R_f(\theta_2)$ land at the same point.
	
	Let $\HHHH$ be a bounded hyperbolic component. By an unpublished theorem of McMullen, there exists a unique postcritically finite polynomial $f_{0}$ contained in $\HHHH$.
	
	Suppose that $g$ is a polynomial in $\CCCC_d$ with $\lambda_\Q(f_0)\subseteq \lambda_\Q(g)$. Fix a bounded Fatou component $U$ of $f_0$. We define a connected and compact subset of $K_g$ associated to $U$ by
	\begin{equation}\label{eq:11}
		K_{g,U}=\bigcap_{(\theta, \theta')\in \lambda_{\Q}(f_0)}\overline{S_g(\theta,\theta')}\cap K_g,
	\end{equation}
	where $S_g(\theta, \theta')$ is the component of $\mathbb{C}\setminus\overline{R_g(\theta)\cup R_g(\theta')}$ which contains external rays $R_g(t)$ such that $R_{f_0}(t)$ land on $\partial U\setminus\{\pi_{f_0}(\theta)\}$.
	
	A bounded hyperbolic component  $\HHHH$ is called \emph{primitive} if  the bounded Fatou domains of $f$ have disjoint closures for $f\in \HHHH$. A point \(z\) of a connected set \(E\subseteq\C\) is called a \emph{cut point} of $E$ if \(E \setminus \{z\}\) has at least two components, a \emph{branched point} if it has at least three components, and an \emph{endpoint} if it remains connected.\vspace{8pt}
	
	\noindent{\bf Theorem A.}
	\emph{Every molecule $\MMMM$ contains a unique primitive hyperbolic component $\HHHH$ and $\lambda_\Q(f_{0})\subseteq \lambda_\Q(g)$ for every $g\in \MMMM$, where $f_0$ is the unique postcritically finite map in $\HHHH$. Furthermore, a geometrically finite polynomial $g$ lies in $\MMMM$ if and only if
		\begin{enumerate}
			\item $\lambda_\Q(f_0)\subseteq \lambda_\Q(g)$, and\vspace{1pt}
			\item  for any  Fatou domain $U$ of $f_0$,  the set $K_{g,U}$ is  a maximal Fatou chain of $g$, or equivalently,
			$K_{g,U}$ has at most countably  many  cut points.
		\end{enumerate}
	}
	\vspace{8pt}

	\noindent{\bf Remark.} We add the following remarks regarding Theorem A.\vspace{3pt}

	(1) The notion of (maximal) Fatou chains was originally introduced in \cite{CGZ,DHS} to study the combinatorics of Julia sets  for rational maps. It may apply to polynomials with minor adaptation, see Section \ref{sec:chain}. Theorem A implies that:\vspace{2pt}
	
	\emph{A geometrically finite polynomial lies in a molecule if and only if each of its critical points is contained in a maximal Fatou chain.}\vspace{2pt}
	
	Thus, maximal Fatou chains correspond precisely to molecules in parameter space.\vspace{3pt}
	
	(2) Condition (2) of Theorem A is equivalent to a finite condition:  $K_{g,U}$ is a maximal Fatou chain for each periodic Fatou component $U$ of $f_0$.\vspace{3pt}
	
	(3) Theorem A shows that each molecule $\MMMM$ is closed under quasiconformal deformation for geometrically finite maps. That is, if a geometrically finite $f\in\MMMM$ is quasiconformally conjugate to some $g\in\PPPP_d$, then $g$ also lies in $\MMMM$. However, this property is known to fail for the closure of a single bounded hyperbolic component, see \cite{Tan}.
	\vspace{3pt}
	
	As we will see in Section \ref{sec:chain}, the maximal Fatou chains of a polynomial are pairwise disjoint. Their counterparts in parameter space, the molecules, share this property.\vspace{8pt}
	
	\noindent {\bf Theorem B.}
	\emph{Two distinct molecules are always disjoint.
	}\vspace{8pt}
	
	The unique hyperbolic component containing $z \mapsto z^d$ is called the \emph{main hyperbolic component} $\HHHH(d)$. The molecule containing $\HHHH(d)$ is called the \emph{main molecule}, denoted by $\MMMM(d)$.
	
	Luo and Park \cite{LP}  undertook the first systematic study of the so-called extended main molecule.
	For a postcritically finite polynomial $f\in\CCCC_d$, let $\HHHH_f$ denote the connected component  of
	\begin{equation}\label{eq:111}
		\{g\in \PPPP_d:\ g\text{ is quasiconformal conjugate to }f\text{ near the Julia set}\}
	\end{equation}
	that contains $f$, referred to as a \emph{relative hyperbolic component} in \cite{LP}. Note that $\HHHH_f=\{f\}$ if $f$ has no bounded Fatou domains, and $\HHHH_f$ is a hyperbolic component if $f$ is hyperbolic.

	They defined the \emph{extended main molecule} as
	\[{\MMMM}_+(d)=\ov{\bigcup_{\HHHH_f\twoheadrightarrow\HHHH(d)} \HHHH_f},\]
	where $f$ is a postcritically finite polynomial,  and the notion $\HHHH_f\twoheadrightarrow\HHHH(d)$ means that
	there exists a finite sequence of postcritically finite polynomials $f=f_1,\ldots,f_m(z)= z^d$ such that $\overline{\HHHH_{f_i}}\cap \overline{\HHHH_{f_{i+1}}}\not=\emptyset$ for all $i=1,\ldots,m-1$.\vspace{3pt}
	
	One main result of \cite{LP} is as follows.\vspace{8pt}
	
	\noindent {\bf Theorem LP} (\cite[Theorm 1.1]{LP}).  {\it Let $f\in\PPPP_d$ be a postcritically finite polynomial.
		Then the following statements are equivalent.
		\begin{enumerate}
			\item The topological entropy of $f$ restricted to its Hubbard tree $T_f$ is zero;
			\item $f$ is contained in the extended main molecule $\MMMM_+(d)$;
			\item $f$ is obtained from $z^d$ via a finite sequence of simplicial tunings;
			\item $J_f\cap T_f$ is at most a countable set.
		\end{enumerate}
	}
	\vspace{5pt}
	
	It is clear that  $\MMMM(d)\subseteq {\MMMM}_+(d)$, and it is known that $\MMMM(2)={\MMMM}_+(2)$. A natural question then arises: does the equality $\MMMM(d)={\MMMM}_+(d)$ hold for all $d\geq3$? \vspace{2pt}
	
	Since the lamination of $\HHHH(d)$ is trivial, it follows from Theorem A  that, a geometrically finite map $f$ lies in $\MMMM(d)$ if and only if $K_f$ has countably many cut points. This condition is in fact equivalent to property (4) in Theorem LP, see Proposition \ref{pro:iterate}. We thus obtain the following.

	\begin{corollary}\label{coro:3}
		For every degree $d\geq 2$, we have $\MMMM(d)={\MMMM}_+(d)$.
	\end{corollary}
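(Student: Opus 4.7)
The plan is to establish the two inclusions $\MMMM(d)\subseteq\MMMM_+(d)$ and $\MMMM_+(d)\subseteq\MMMM(d)$ separately, using Theorem A, Theorem LP, and the equivalence between ``$K_f$ has at most countably many cut points'' and ``$J_f\cap T_f$ is at most countable'' recorded in Proposition~\ref{pro:iterate}.

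The inclusion $\MMMM(d)\subseteq\MMMM_+(d)$ is an unwinding of the definitions. Every bounded hyperbolic component $\HHHH'$ with $\HHHH'\rightsquigarrow\HHHH(d)$ has a unique postcritically finite center $f'$, and for such a hyperbolic center the relative hyperbolic component $\HHHH_{f'}$ coincides with $\HHHH'$. A chain of bounded hyperbolic components $\HHHH_1=\HHHH',\dots,\HHHH_m=\HHHH(d)$ with pairwise touching closures yields centers $f_1,\dots,f_m$ with $\HHHH_{f_i}=\HHHH_i$, which certifies $\HHHH_{f_1}\twoheadrightarrow\HHHH(d)$. Hence $\HHHH'\subseteq\MMMM_+(d)$, and taking closures gives the inclusion.

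For the reverse inclusion, since $\MMMM(d)$ is closed it suffices to show $\HHHH_f\subseteq\MMMM(d)$ for every postcritically finite $f$ with $\HHHH_f\twoheadrightarrow\HHHH(d)$. I would first handle the center: by Theorem LP the set $J_f\cap T_f$ is at most countable; Proposition~\ref{pro:iterate} then gives that $K_f$ has at most countably many cut points; and since $f$ is postcritically finite (hence geometrically finite) and the rational lamination of $f_0(z)=z^d$ is empty, the two conditions of Theorem A for $\MMMM(d)=\MMMM(\HHHH(d))$ collapse to exactly this cut-point condition. Thus $f\in\MMMM(d)$. Extending from $f$ to all of $\HHHH_f$ is then a quasiconformal deformation argument: every $g\in\HHHH_f$ is quasiconformally conjugate to $f$ on a neighborhood of $J_f$, so $g$ is geometrically finite (Misiurewicz critical points of $f$ on $J_f$ stay pre-periodic, and any critical points of $f$ in bounded Fatou components remain in attracting basins for $g$), and the cut-point structure of $K_g$ is canonically identified with that of $K_f$. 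Theorem A then places $g$ inside $\MMMM(d)$ as well.

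The step most likely to need care is the propagation from the center $f$ to the full relative component $\HHHH_f$, since one must simultaneously control geometric finiteness of every $g\in\HHHH_f$ and the combinatorial equivalence of the cut-point structures of $K_g$ and $K_f$. This is precisely the content of Remark~(3) following Theorem~A, so once that remark is in hand the argument is essentially automatic; everything else is a direct translation between the two characterizations of molecules provided by Theorems~A and~LP.
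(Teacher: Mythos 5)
Your argument is correct and follows the paper's intended route: Theorem A reduces membership of a geometrically finite map in $\MMMM(d)$ to the countable-cut-point condition (the lamination condition being vacuous for $z^d$), Proposition~\ref{pro:iterate} identifies that condition with condition (4) of Theorem LP, and the two inclusions then follow by unwinding the definitions of the two molecules and taking closures. One small correction: the propagation from the center $f$ to all of $\HHHH_f$ should be justified by Proposition~\ref{pro:same1} (all maps in a relative hyperbolic component share the same rational lamination, hence the same cut-point structure of the filled Julia set) rather than by Remark~(3), since elements of $\HHHH_f$ are quasiconformally conjugate to the postcritically finite center only near the Julia set and not globally, so Remark~(3) does not literally apply.
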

	
	This corollary yields the density of  hyperbolic maps in ${\MMMM}_+(d)$, thereby positively answering Question 1.6 raised in \cite{LP}. \vspace{2pt}
	
	Theorem A classifies the geometrically finite polynomials within a molecule. A  subsequent question is: which of these polynomials lie in the closure of a bounded hyperbolic component?

	For $d=2$, every geometrically finite map in a molecule
	lies in the closure of a bounded hyperbolic component. For higher degrees,  however, the situation is more subtle.
	As shown in \cite[Section 3.2]{LP},  there exists a postcritically finite map in $\MMMM(3)$
	outside the closure of any bounded hyperbolic component.
	In addition, the combinatorial classification of geometrically finite polynomials on $\partial\HHHH(d)$ was established in \cite{Luo}.\vspace{2pt}
	
	Our third main result completely characterizes the subhyperbolic polynomials in the closures of bounded hyperbolic components.\vspace{8pt}
	
	\noindent{\bf Theorem C.}
	\emph{Let $f\in\CCCC_d$ be a subhyperbolic polynomial. Then $f$ lies in the closure of a bounded hyperbolic component if and only if every critical point of $f$ is eventually mapped into the maximal Fatou chain generated by a single periodic bounded Fatou domain.
	}\vspace{8pt}
	
	For  definition of maximal Fatou chains generated by single Fatou domains, see Definition~\ref{def:chain}.
	\vspace{3pt}

	The proofs of the above theorems rely on three main tools.\vspace{3pt}
	
	$\bullet$ The first is the combinatorial notion of Fatou chains mentioned earlier. To our knowledge, this is the first application of Fatou chains in the study of parameter spaces.\vspace{3pt}

	$\bullet$ The second  is the hyperbolic-parabolic deformation theory established in \cite{CT2}. \vspace{3pt}
	
	$\bullet$ The third  is a new dynamical perturbation technique for geometrically finite polynomials (Theorem~\ref{thm:perturbation}), which provides a hyperbolic–subhyperbolic deformation framework, thus complementing the hyperbolic-parabolic deformation theory when studying the polynomial bifurcations.
	\vspace{3pt}

	The organization of the paper is as follows.\vspace{2pt}
	
	In Section 2 we define two kinds of Fatou chains and establish their basic properties.  Section 3 proves the dynamical perturbation theorem (Theorem  \ref{thm:perturbation}) for geometrically finite polynomials, and Section 4 introduces laminations and geodesic laminations	
	
	We show in Section 5 that the conclusion of Theorem A holds for ``extended molecules'' (Theorem \ref{thm:gf}), and then prove in Section 6 that extended molecules coincide with molecules (Theorem \ref{thm:same}). This completes the proof of Theorems A and B.
	Theorem C is proved in Section 7.

	\vspace{15pt}
	
	\noindent{\bf Acknowledgement.} The authors are grateful for insightful discussions with Guizhen Cui and Luxian Yang. The first author is supported by the  National Key R\&D Program of China (Grant no. 2021YFA1003203), the NSFC (Grant no. 12322104)
	and the NSFGD (Grant no. 2023A1515010058). The second author is supported by the NSFC (Grant
	no. 12271115).
	
	\section{Fatou chains of polynomials}\label{sec:chain}

	A \emph{continuum} is a connected and compact subset of $\mathbb{C}$ containing at least two points, and a continuum is \emph{full} if its complement is connected. A \emph{disk} always means a Jordan domain in $\C$.	
	
	Let $f$ be a polynomial in $\CCCC_d$. For a continuum $E$ satisfying $f^p(E)\subseteq E$, we define the \emph{continuum generated by} $E$ as
	\[K_E=\ov{\bigcup_{i\geq 0} E^i},\] where $E^i$ denotes the component of $f^{-ip}(E)$ containing $E$. Since $E^i\subseteq E^{i+1}$ for all $i\ge0$, the set $K_E$ remains unchanged whether we use the original period $p$ or any multiple $p'=kp$.
	
	If $K_E=E$, or equivalently, if $E$ is a connected component of $f^{-p}(E)$, then $E$ is called a \emph{stable continuum} of $f$.
	We say that a full stable continuum $E$ \emph{induces  renormalization} if there exists disks $W\Subset V$ such that $f^p:W\to V$ is proper and $E=\{z\in W: f^{ip}(z)\in W,\forall\, i\geq0\}$.
	
	The following is  a criterion when a full stable continuum induces  renormalization.
	
	\begin{lemma}[\cite{BOPT},\,Theorem B]\label{lem:renormalization}
		Let $E$ be a full stable continuum of $f$ with  $\mathrm{deg} f|_E\geq 2$. Then, $E$ induces  renormalization if and only if for every attracting or parabolic point $z\in E$ of $f$, the immediate attracting
		basin of $z$ or the union of all immediate parabolic domains of $z$ is a subset of $E$.
	\end{lemma}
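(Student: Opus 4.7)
The plan is to handle the two directions separately.

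\medskip

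\emph{Necessity.} Suppose $(f^p,W,V)$ is a polynomial-like restriction whose filled Julia set equals $E$. Then $\partial E$ is the Julia set of this polynomial-like map, and in particular $\partial E\subseteq J_f$. If $z\in E$ is an attracting periodic point of $f$, the Fatou component $U$ of $f$ containing $z$ is connected and disjoint from $J_f$, hence $U\cap\partial E=\emptyset$; combined with $z\in U\cap E$, connectedness forces $U\subseteq E$. The same connectedness argument applied to each immediate parabolic domain at a parabolic periodic point $z\in E$ (which has $z$ on its boundary and lies in $F_f$) shows that the union of all such domains is contained in $E$.

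\medskip

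\emph{Sufficiency.} The idea is to produce $W\Subset V$ with $f^p\colon W\to V$ proper by taking a disk $V\supset E$ and letting $W$ be the connected component of $f^{-p}(V)$ containing $E$. Then $f^p\colon W\to V$ is automatically a proper branched covering, and the associated non-escaping set equals $E$ provided $W\Subset V$. Since $E$ is an isolated component of the compact set $f^{-p}(E)$, one has $\overline{W}\cap f^{-p}(E)=E\subseteq V$ once $V$ is chosen small enough; the real question is whether $W$ itself sits compactly inside $V$.

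The obstruction to $W\Subset V$ is a point $\zeta\in\overline{W}\cap\partial V$ whose $f^p$-orbit remains on $\partial V$ and accumulates on $E$. Such a $\zeta$ witnesses a periodic Fatou component of $f^p$ that meets $E$ but protrudes outside it, which in turn is equivalent to the existence of an attracting or parabolic periodic point $z\in E$ of $f$ one of whose immediate basin components (attracting case) or immediate parabolic domains (parabolic case) fails to be contained in $E$. The hypothesis rules this out, and a direct computation with Koenigs/B\"{o}ttcher coordinates at attracting cycles and Fatou coordinates at parabolic cycles then yields $W\Subset V$ after a slight shrinkage of $V$.

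\medskip

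The main obstacle is the parabolic case. An attracting periodic point in $E$ is surrounded by linearizing coordinates on any sufficiently small neighborhood, so compact inclusion is nearly automatic there. At a parabolic periodic point $z\in\partial E$, by contrast, the $f^p$-dynamics on the petals attached to $z$ is only slowly convergent, and a petal extending into $\C\setminus E$ would force $W$ to touch $\partial V$ no matter how $V$ is shrunk. Excluding such external petals is exactly what the parabolic hypothesis provides, and converting this qualitative condition into the quantitative Fatou-coordinate estimates that yield $W\Subset V$ is the technical heart of the argument.
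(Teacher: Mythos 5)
The paper does not prove this statement at all — it is imported verbatim as \cite[Theorem B]{BOPT} — so your argument has to stand entirely on its own, and as written it does not. In the necessity direction the skeleton is right but the parabolic case has a genuine gap: your connectedness argument can only upgrade ``$U\cap\partial E=\emptyset$'' to ``$U\subseteq E$'' once you know $U\cap E\neq\emptyset$. For an attracting $z$ this is free because $z\in U\cap E$, but for a parabolic $z$ you only have $z\in\partial U$, and a connected set avoiding $\partial E$ that touches $E$ only at a boundary point may lie entirely in $\C\setminus E$. You must first show that each immediate parabolic domain at $z$ actually meets $E$, e.g.\ by checking that a sufficiently small attracting petal of the appropriate iterate at $z$ has all of its $f^p$-iterates trapped in a small neighborhood of the finite $f^p$-orbit of $z$, hence inside $W$, hence inside the non-escaping set $E$. (The assertion $\partial E\subseteq J_f$ also deserves a line, say via density of repelling cycles of the polynomial-like restriction in $\partial E$.)

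The sufficiency direction is where the proposal stops being a proof. Its entire content is the unsupported sentence that a point $\zeta\in\overline{W}\cap\partial V$ ``witnesses a periodic Fatou component of $f^p$ that meets $E$ but protrudes outside it'': no reason is given why such a $\zeta$ should lie in the Fatou set at all, nor why its orbit should remain on $\partial V$, and this dichotomy is precisely the content of the theorem. Moreover the proposed domain is not the right one. For a generic small Jordan neighborhood $V$ of $E$, the component $W$ of $f^{-p}(V)$ containing $E$ fails to be compactly contained in $V$ near a critical point of $f^p$ on $\partial E$ (where $f^{-p}$ turns an $\epsilon$-neighborhood into something of size $\epsilon^{1/k}$) and, as you yourself note, in the repelling petals of a parabolic point of $\partial E$, where $f^{-p}$ contracts only by $1-O(r^{\nu})$; fixing this requires building $V$ adapted to the dynamics (translation-invariant neighborhoods in Fatou coordinates inside the repelling petals, equipotential-type boundaries elsewhere), not ``a slight shrinkage'' of a disk. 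Finally, you never verify that the non-escaping set of the resulting $f^p:W\to V$ equals $E$ rather than something strictly larger; this uses that $E$ is a \emph{component} of $f^{-p}(E)$ and is not automatic. All of this deferred material is the actual proof in \cite{BOPT}, so the argument as it stands establishes neither implication in full.
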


	%
	
	We now recursively define the (periodic) Fatou chains of $f$ in each level:
	\begin{itemize}
		\item \textbf{Level-0:} The closures of all bounded periodic Fatou domains.
		
		\item \textbf{Level-1:} Let $E_1, \ldots, E_m$ be the components of the union of all  level-0 Fatou chains. The level-1  Fatou chains are  defined as the continua generated by $E_1, \ldots, E_m$, respectively.
		
		\item \textbf{Level-$\mathbf{(n+1)}$:} Assume that the level-$n$ periodic Fatou chains have been defined. Let $E_1, \ldots, E_m$ be the components of their union. The level-$(n+1)$  Fatou chains are defined as the continua generated by these components.
	\end{itemize}

	\begin{proposition}\label{pro:max}
		There exists an integer $N$ such that each level-$N$ periodic Fatou chain is also a level-$(N+1)$ one. Consequently, each level-$N$ periodic Fatou chain is a stable continuum of $f$.
	\end{proposition}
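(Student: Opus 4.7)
The plan is to introduce $X_n := \bigcup_i K^{(n)}_i$, the union of all level-$n$ periodic Fatou chains of $f$, and to combine a monotonicity argument on the number of components of $X_n$ with a degree-based finiteness argument to force stabilization. First I would verify the nesting $X_n \subseteq X_{n+1}$: each level-$(n+1)$ chain $K_E$ contains its generating seed $E$, a component of $X_n$, so $X_n \subseteq X_{n+1}$. The induced map from components of $X_n$ to components of $X_{n+1}$ (sending each component to the unique one of $X_{n+1}$ containing it) is surjective, so the number $m_n$ of connected components of $X_n$ is non-increasing. Since $m_0$ is bounded by the finite number of bounded periodic Fatou cycles of $f$, there exists $N_0$ beyond which $m_n = m$ is constant. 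For $n \geq N_0$, distinct level-$n$ chains must have pairwise disjoint closures---otherwise they would merge in forming $X_n$ and $m$ would strictly drop---so the components of $X_n$ are precisely the level-$n$ chains, and each level-$(n+1)$ chain takes the form $K_{K^{(n)}_i}$.

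The central task is then to show that for some $N \geq N_0$, $K_{K^{(N)}_i} = K^{(N)}_i$ for every $i$. Setting $P(K)$ to denote the component of $f^{-p}(K)$ containing $K$, this reduces to $P(K^{(N)}_i) = K^{(N)}_i$, i.e.\ that $K^{(N)}_i$ is a stable continuum. Writing $K^{(n)}_i = \overline{\bigcup_l E_l}$ where $E_l = P^l(E)$ is the nested sequence of preimages starting from the seed $E = E^{(n)}_i$, I would examine the ``extra'' preimage components of each $E_l$ under $f^{-p}$---namely the components of $f^{-p}(E_l)$ other than $E_{l+1}$. A short argument using the nestedness $E_m \subseteq E_{l+1}$ for $m \leq l+1$, together with the disjointness of distinct preimage components, shows that each extra component is disjoint from every $E_m$. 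The principal obstacle is to upgrade this to disjointness from the closure $K^{(n)}_i = \overline{\bigcup_m E_m}$, since an extra component could in principle accumulate on limit points of the $E_m$'s and thereby attach to the chain at the next level.

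To resolve the obstacle I would appeal to a finiteness principle based on the degree $d^p$ of $f^p$. Each strict growth step from $K^{(n)}_i$ to $K^{(n+1)}_i$ corresponds to absorbing at least one new preimage branch of $f^p$, and the total number of such branches that can ever attach to a single chain is bounded---the local mapping degree of $f^p$ on the chain can only increase a finite number of times before hitting the ambient bound $d^p$. Consequently the sequence $K^{(n)}_i$ must eventually stabilize at some $N$, giving $K_{K^{(N)}_i} = K^{(N)}_i$. The ``Consequently'' clause of the proposition then follows immediately: the equality $K^{(N)}_i = K_{K^{(N)}_i}$ implies in particular $(K^{(N)}_i)^1 = K^{(N)}_i$, so $K^{(N)}_i$ is a component of $f^{-p}(K^{(N)}_i)$, which is the definition of a stable continuum.
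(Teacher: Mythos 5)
Your proposal is correct and follows essentially the same two-step route as the paper's proof: first the number of level-$n$ chains is non-increasing and hence eventually constant, so that distinct chains become disjoint and the components of their union are the chains themselves; then the bound $\deg(f^p|_{K_n(U)})\le(\deg f)^p$, together with the observation that each strict growth step of a chain strictly increases this degree, forces stabilization, which is exactly the paper's degree argument. (One minor, non-load-bearing slip: the claim in your second paragraph that an extra component of $f^{-p}(E_l)$ is disjoint from \emph{every} $E_m$ is only justified for $m\le l+1$ and can fail for $m>l+1$ --- such a component may be swallowed by a later $E_m$, which is precisely the mechanism by which the chain grows --- but your concluding degree argument does not rely on it.)
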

	\begin{proof}
		Let $k(n)$ denote the number of level-$n$ Fatou chains of $f$. Since $k(n)$ is decreasing, there exists $n_0$ such that $k(n)$ is constant for $n \geq n_0$, and distinct level-$n$ chains are pairwise disjoint.
		
		For a periodic bounded Fatou domain $U$ of period $p$, let $K_n(U)$ be the level-$n$ Fatou chain containing $U$. Then $f^p(K_n(U)) = K_n(U)$, and for $n \ge n_0$, $K_n(U)$ is the unique level-$n$ Fatou chain in $K_{n+1}(U)$. If $K_n(U)$ is not a component of $f^{-p}(K_n(U))$, then
		\[
		\deg(f^p|_{K_{n+1}(U)}) > \deg(f^p|_{K_n(U)}).
		\]
		Since $\deg(f^p|_{K_{n+1}(U)}) \leq (\deg f)^p$, the sequence $\deg(f^p|_{K_n(U)})$ must stabilize. Thus, there exists $n(U)$ such that for $n \ge n(U)$, $K_n(U)$ is a component of $f^{-p}(K_n(U))$ and $K_{n+1}(U) = K_n(U)$.
		
		Taking $N = \max n(U)$ over all periodic bounded Fatou domains $U$, every level-$n$ Fatou chain for $n \geq N$ is also a level-$N$ Fatou chain.
	\end{proof}
	These level-$N$ Fatou chains together with all components of their iterated preimages form the \emph{maximal Fatou chains} of $f$, which are pairwise disjoint by construction.
	
	\begin{proposition}\label{pro:cut-point}
		Suppose that $J_f$ is locally connected. Then
		each Fatou chain of any level contains at most countably many cut points, with only finitely many being periodic. Furthermore, if each critical point in $J_f$ is preperiodic,  all these cut points are preperiodic.
	\end{proposition}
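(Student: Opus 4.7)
The plan is to induct on the level $n$, with inductive hypothesis $(P_n)$ asserting the three conclusions of the proposition for every level-$n$ Fatou chain. For the base case $n=0$, fix a bounded periodic Fatou component $U$ of period $p$. Local connectivity of $J_f$ extends the Riemann map to a continuous surjection $\phi_U\colon\overline{\mathbb D}\to\overline U$ semi-conjugating $z\mapsto z^{d_U}$ to $f^p|_{\overline U}$, where $d_U=\deg(f^p|_U)$. A point of $\partial U$ is a cut point of $\overline U$ iff its fiber under $\phi_U$ has at least two elements; the resulting closed equivalence relation $\sim_U$ on $\partial\mathbb D$ is forward invariant under $z\mapsto z^{d_U}$. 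The key claim at level $0$ is that $\sim_U$ is the iterated-preimage saturation of finitely many periodic classes, corresponding to the parabolic periodic points of $f$ that lie on $\partial U$. By the Fatou--Douady--Hubbard Jordan-boundary theorem (and Petersen--Zakeri-type results for Siegel components under local connectivity), attracting and Siegel components have Jordan closure so $\sim_U$ is trivial; in the parabolic case the only identifications are at the parabolic periodic points, of which there are finitely many by finiteness of non-repelling cycles. Consequently $\overline U$ has countably many cut points, finitely many periodic, all preperiodic, unconditionally.

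For the inductive step $(P_n)\Rightarrow(P_{n+1})$, a periodic level-$(n+1)$ chain has the form $K_E=\overline{\bigcup_{i\ge 0}E^i}$, with $E$ a component of the union of level-$n$ chains and $p$ its period. I would partition the cut points of $K_E$ into: (i) cut points inherited from level-$n$ sub-chains inside some $E^i$; (ii) junction points in some $E^i$ where two distinct level-$n$ sub-chains meet; and (iii) limit points lying in $K_E\setminus\bigcup E^i$. Type (i) is countable with finitely many periodic by $(P_n)$ applied to the finitely many orbits of level-$n$ sub-chains inside $K_E$, combined with the countability of $f^p$-preimages. For type (ii), a junction point belongs to the common boundary of Fatou components from distinct periodic cycles (or their iterated preimages), forcing it to be a (pre)periodic landing point of finitely many external rays; only finitely many orbits of such meetings occur, and their iterated preimages form a countable, preperiodic set. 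Type (iii) limit points accumulate on points already counted in (i) or (ii), contributing no new periodic cut points and at most a countable closure of preperiodic ones. Finally, a non-periodic maximal Fatou chain is a component $L$ of $f^{-m}(K)$ for some periodic maximal chain $K$; since $f^m|_L$ is a finite-degree branched cover onto $K$, the cut points of $L$ are exactly the $f^m$-preimages in $L$ of cut points of $K$, so countability, finiteness of periodic points, and preperiodicity are preserved.

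The central obstacle is establishing the level-$0$ claim that $\sim_U$ is controlled by finitely many periodic classes located at parabolic periodic points on $\partial U$. This requires assembling the Jordan-boundary theorems for attracting and Siegel Fatou components with the parabolic pinch description, leaning crucially on local connectivity of $J_f$ (which also rules out Cremer periodic points). Once that is in place, the inductive step is essentially a bookkeeping argument using the classical fact that Fatou components belonging to distinct periodic cycles can touch only at (pre)periodic landing points of finitely many external rays, together with the preservation of countability and finiteness of periodic points under finite-degree branched covers.
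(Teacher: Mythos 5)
Your overall strategy (induction on the level, then a trichotomy of cut points in the inductive step) matches the paper's, but there are two problems, one of which is a genuine gap. First, at level $0$ your premise is wrong: a boundary point $z$ of a bounded Fatou domain $U$ whose fiber under the Carath\'eodory extension has two or more elements is \emph{not} a cut point of $\overline{U}$. Since $U$ is a connected open set dense in $\overline{U}$, the set $\overline{U}\setminus\{z\}$ still contains $U$ and is contained in its closure, hence is connected. So $\overline{U}$ has \emph{no} cut points at all, unconditionally, and the entire apparatus of Jordan-boundary theorems, parabolic pinch points, and exclusion of Cremer points is unnecessary (and, as stated, incorrect, though your weaker conclusion of countability is of course still true). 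The paper simply notes that level-$0$ chains have no cut points.

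The genuine gap is in your type (iii). You assert that the points of $K_E\setminus\bigcup_i E^i$ contribute ``at most a countable closure of preperiodic'' cut points, but this set is typically an uncountable subset of the Julia set, and the closure of a countable set need not be countable; nothing in your argument bounds the number of cut points arising there. What is actually true, and what the proof needs, is that \emph{no} point of $K_E\setminus\bigcup_i E^i$ is a cut point of $K_E$: if $x$ were one, local connectivity gives two external rays landing at $x$ whose union separates $K_E$ into two pieces each meeting $K_E$; but since $x$ avoids every $E^i$ and $\bigcup_i E^i$ is connected and dense in $K_E$, that union lies entirely on one side, so the other side cannot meet $K_E$ --- a contradiction. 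Without this separation argument (or a substitute) your inductive step does not establish countability. Your type (ii) discussion is also looser than necessary --- the level-$n$ chains meeting at a junction are periodic continua, not single Fatou components, and the right argument is that the rays landing at the junction force $B_i\cap B_j=\{z\}$, whence $z$ is periodic because $B_i$ and $B_j$ are --- but that part is repairable bookkeeping rather than a gap.
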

	\begin{proof}
		The proof goes by induction on the level of Fatou chains.
		Clearly, level-$0$ Fatou chains contain no cut points. Assume that every level-$n$ Fatou chain satisfies the stated property. Let $E$ be a component of the union of all  level-$n$ Fatou chains, and denote by $B_1,\ldots,B_m$ the level-$n$ Fatou chains contained in $E$.
		
		Let $z$ be a cut point of $E$ that is not a cut point of any $B_i$. Then $z \in B_i \cap B_j$ for some $i \neq j$. Since $J_f$ is locally connected, there exist external rays $R_f(\alpha)$ and $R_f(\beta)$ landing at $z$ such that $\gamma = R_f(\alpha) \cup \{z\} \cup R_f(\beta)$ separates $B_i$ and $B_j$, which implies $\{z\} = B_i \cap B_j$. As $B_i$ and $B_j$ are periodic, $z$ is periodic. Combined with the induction hypothesis, this shows $E$ satisfies the  conclusion of the proposition.
		
		For $k \geq 1$, let $E_k$ be the component of $f^{-kp}(E)$ containing $E$, where $p$ is the period of $E$. Any cut point of $E_k$ maps under iteration to either a critical point of $f^p$ or a cut point of $E$. Therefore, every $E_k$, and hence $\bigcup_{k \geq 0} E_k$, satisfies the  conclusion of the proposition.
		
		Let $K = \overline{\bigcup_{k \geq 0} E_k}$ be the level-$(n+1)$ Fatou chain generated by $E$. We claim that no point $x \in K \setminus \bigcup_{k \geq 0} E_k$ is a cut point of $K$. Suppose otherwise: then there exist external rays $R_f(\theta_\pm)$ landing at $x$ such that both components $W_\pm$ of $\mathbb{C} \setminus (R_f(\theta_+) \cup \{x\} \cup R_f(\theta_-))$ intersect $K$. Since $x$ avoids all $E_k$ ($k \geq 1$), the set $\bigcup_{k \geq 0} E_k$ lies entirely in one component, say $W_+$. Then $K $ is disjoint from $W_-$, a contradiction. Thus $K$ satisfies the required property.  \end{proof}
	
	
	Similarly, we  can  define the   Fatou chains generated by  single Fatou domains.
	
	\begin{definition}\label{def:chain}
		Let $U$ be a bounded periodic Fatou domain of $f$. The level-$0$ Fatou chain generated by $U$ is defined as $\overline{U}$. Inductively, if $E$ denotes the level-$n$ Fatou chain generated by $U$, then the level-$(n+1)$ Fatou chain generated by $U$ is defined as the continuum generated by $E$.
	\end{definition}
	
	Analogous to Proposition \ref{pro:max}, this process stabilizes, yielding a \emph{maximal Fatou chain generated by $U$}, which is still a stable continuum of $f$.
	
	\begin{proposition}\label{pro:chain1}
		Suppose that $J_f$ is locally connected. Then the maximal Fatou chain $E_U$ generated by a fixed Fatou domain $U$ has no periodic cut points. In particular, $E_U=\overline{U}$ if and only if $\partial U$ contains no critical points.
	\end{proposition}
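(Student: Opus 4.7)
The plan is to compare the maximal chain $E_U$ with its finite approximations. Set $E_0^i$ to be the connected component of $f^{-ip}(\overline U)$ containing $\overline U$, so that $E_U = \overline{\bigcup_i E_0^i}$ and $E_0^i \subseteq E_0^{i+1}$. The first step is to show that any cut point of $E_U$ must also cut some $E_0^i$: assuming to the contrary that $z$ is not a cut point of any $E_0^i$, each $E_0^i \setminus \{z\}$ is connected, and they all contain the common connected piece $\overline U \setminus \{z\}$, so the nested union $\bigcup_i (E_0^i \setminus \{z\})$ is connected. Since $\bigcup_i E_0^i$ is dense in $E_U$ and $z$ is non-isolated in whichever $E_0^i$ first contains it (continua have no isolated points), this union remains dense in $E_U \setminus \{z\}$; the closure of a connected dense subset being connected, $E_U \setminus \{z\}$ is connected, contradicting the cut-point hypothesis.

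The first assertion then follows from a local branching analysis of $E_0^i$. Each $E_0^i$ is a chain of Fatou-component closures $\overline V$ with $f^{ip}(V) = U$, so any cut point $w$ must lie in $\overline{V_1}\cap\overline{V_2}$ for two distinct such components. Both sheets mapping onto $\overline U$ under $f^{ip}$ forces $f^{ip}$ to fail local injectivity at $w$, so $f^j(w)$ is a critical point of $f$ for some $0\le j<ip$. Because $w\in J_f$, the image $f^j(w)$ is a critical point of $f$ lying in $J_f$, which cannot be periodic (else it would produce a super-attracting cycle inside the Fatou set); hence $w$ is not periodic either. Combined with the reduction above, $E_U$ admits no periodic cut points.

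For the ``in particular'' claim, the same local branching dichotomy gives the characterization. If $\partial U$ contains no critical point of the first-return map $f^p$, then $f^{ip}$ is a local homeomorphism along $\partial U$, so no Fatou-closure sheet of $f^{-ip}(\overline U)$ through a boundary point differs from $\overline U$, and $E_0^i = \overline U$ for all $i$. Conversely, a critical point $w\in\partial U$ of $f^p$ with local degree $k\ge 2$ produces $k$ distinct sheets of $f^{-p}(\overline U)$ meeting at $w$, one being $\overline U$ and the rest being closures of Fatou components $V\neq U$ with $f^p(V) = U$; these enlarge $E_0^1$ strictly beyond $\overline U$. The most delicate step is the density--connectedness reduction in the first paragraph, which requires $z$ to be non-isolated within some $E_0^i$; the rest is a direct branching computation along the orbit of $\partial U$.
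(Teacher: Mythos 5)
Your first step --- that a cut point of $\overline{\bigcup_i E_0^i}$ must already cut some $E_0^i$, via the dense-connected-subset argument --- is correct and in fact cleaner than the external-ray argument the paper uses for the analogous reduction. But the proof has a structural gap at the very start: $E_U$ is \emph{not} $\overline{\bigcup_i E_0^i}$. By Definition~\ref{def:chain}, that set is only the \emph{level-$1$} chain generated by $U$; the maximal chain is obtained by iterating the ``continuum generated by'' operation through levels until it stabilizes, and stabilization may genuinely require several levels (this is the content of the degree-increasing argument in Proposition~\ref{pro:max}: each unstabilized level absorbs new critical points at limit points of the previous level). Your second paragraph relies essentially on $E_0^i$ being a finite union of closures of Fatou components $V$ with $f^{ip}(V)=U$; at level $n+1$ one instead takes preimage components of the level-$n$ chain $E$, which are not Fatou-component closures, so the local branching analysis does not carry over. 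The paper's proof inducts on the level and treats a hypothetical \emph{periodic} cut point $z$ of $E_k$ differently: since $z$ is periodic and lies in $J_f$ it is not precritical, so $f^k$ is locally injective at $z$ and pushes the separating ray pair down to one that would disconnect $E$, contradicting the inductive hypothesis. Your argument, by contrast, rules out all non-precritical cut points at level $1$ but gives nothing at higher levels.

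There is a second, smaller gap even at level $1$: the assertion that $w\in\overline{V_1}\cap\overline{V_2}$ with $f^{ip}(V_1)=f^{ip}(V_2)=U$ ``forces $f^{ip}$ to fail local injectivity at $w$'' is not automatic. If $f^{ip}$ is a local homeomorphism at $w$, two distinct components of $f^{-ip}(U)$ can still abut at $w$ whenever $u=f^{ip}(w)$ admits more than one access from $U$ (the two local sheets of $f^{-ip}(U)$ near $w$ then come from two different local components of $U$ near $u$). To exclude this you must invoke that each point of $\partial U$ has a single access from $U$, i.e.\ that $\partial U$ is a Jordan curve --- true for a periodic bounded Fatou component when $J_f$ is locally connected, but a nontrivial input that should be cited rather than left implicit. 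Finally, in the ``in particular'' part note that what your branching computation actually characterizes is the absence of critical points of the return map $f^p$ on $\partial U$ (equivalently, of critical points of $f$ on the boundaries of all components of the cycle of $U$), which for $p>1$ is not literally the same as $\partial U\cap C_f=\emptyset$.
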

	\begin{proof}
		Note that $\overline{U}$, the level-$0$ Fatou chain generated by $U$, has no cut points. By induction, assume the level-$n$ Fatou chain $E$ generated by $U$ has no periodic cut points.
		
		For each $k \geq 1$, let $E_k$ be the component of $f^{-k}(E)$ containing $E$.
		We claim that $E_k$ has no periodic cut points.
		Suppose otherwise: let $z$ be a periodic cut point of $E_k$, and let $R_f(\theta_\pm)$ be external rays landing at $z$ such that $\gamma = R_f(\theta_+) \cup \{z\} \cup R_f(\theta_-)$ separates $E_k$. Since $f^k$ is injective near $z$ and $f^k(E_k) = E$, it follows that $z$ would disconnect $E$, a contradiction.
		
		Thus, the level-$(n+1)$ Fatou chain $K = \overline{\bigcup_{k \geq 0} E_k}$ generated by $U$ has no periodic cut points.
	\end{proof}
	
	Using Lemma \ref{lem:renormalization}, we immediately obtain the following.
	
	\begin{proposition}\label{pro:renormalization}
		If $f$ has no irrational indifferent periodic points, then any periodic maximal Fatou chain induces  renormalization. If $f$ has no indifferent periodic points, then  any maximal Fatou chain generated by a single periodic Fatou domain induces renormalization.
	\end{proposition}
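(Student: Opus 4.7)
The plan is to reduce both statements to Lemma \ref{lem:renormalization} by verifying, for the chain in question, that it is a full stable continuum with $\deg(f^p|_K)\geq 2$ and that every attracting or parabolic periodic point lying in $K$ drags its entire immediate basin (respectively, the union of immediate parabolic domains) into $K$.

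For the first statement, take a periodic maximal Fatou chain $K$ of period $p$. Stability is exactly Proposition \ref{pro:max}. Fullness I would prove by induction on the level: a level-$0$ chain $\overline{U}$ is full because Fatou components in $\CCCC_d$ are simply connected, and passing to the next level (components of unions of chains, then iterated preimages) preserves fullness inside the full continuum $K_f$. For the degree, $K$ contains the closure of some periodic bounded Fatou domain which, by the no-Siegel hypothesis, is attracting or parabolic, so its cycle carries a critical point of $f$, producing the required branching of $f^p$ inside $K$.

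The key step is the containment condition. If $z\in K$ is attracting periodic with immediate basin $V$, then $z$ is an interior point of $V$, so $V\cap K\neq\emptyset$; since $K$ is a union of Fatou component closures together with Julia-set limit points, and $z$ does not lie on the boundary of any other Fatou component in $K$, the open component $V$ must itself sit in $K$, giving $\overline{V}\subseteq K$. If $z\in K$ is parabolic periodic, the immediate parabolic domains $V_1,\dots,V_k$ are periodic bounded Fatou components sharing $z$ on their boundaries, so $\overline{V_1},\dots,\overline{V_k}$ are amalgamated into a single level-$1$ chain; the maximal chain they generate contains $z$ and, by the pairwise disjointness of maximal chains, coincides with $K$, whence $\bigcup_i V_i\subseteq K$. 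Lemma \ref{lem:renormalization} then yields the renormalization.

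The second statement follows by the same template applied to $K_U$, with stability and fullness established by obvious analogues and the attracting-containment argument going through verbatim, using that the only periodic Fatou domain in $K_U$ is $U$ itself. The strengthened hypothesis that $f$ has no indifferent periodic points is essential precisely because it eliminates parabolic periodic points, which would otherwise be intractable: $K_U$ could not accommodate multiple immediate parabolic domains at some $z\in K_U$ since it contains only one periodic bounded Fatou component. I expect the main technical obstacles to be (i) the fullness induction, which requires a careful argument that the construction cannot enclose a bounded hole, exploiting the tree-like combinatorics of Fatou chains inside $K_f$; and (ii) the confirmation of the degree bound $\deg(f^p|_{K_U})\geq 2$ in the second part, which may require bookkeeping of critical points along the cycle of $U$ and across the preperiodic preimages of $U$ lying in $K_U$.
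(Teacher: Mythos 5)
Your proposal is correct and follows exactly the paper's route: the paper deduces this proposition directly from Lemma \ref{lem:renormalization} with no further argument, and the hypotheses you verify (stability via Proposition \ref{pro:max}, fullness, the degree bound coming from the critical point attracted by the relevant cycle, and the containment of immediate basins and parabolic domains via the saturation of chains by Fatou components and the pairwise disjointness of maximal chains) are precisely what that reduction requires. The only small caution is that simple connectivity of $U$ alone does not imply $\overline{U}$ is full (a slit annulus is a counterexample); one should instead use that a bounded complementary component of $\overline{U}$ would have its boundary in $J_f=\partial U_f(\infty)$ and hence be incompatible with the density of the basin of infinity near $J_f$ --- a standard point which is surely what the paper intends by calling the proposition immediate.
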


	Next, we present several equivalent characterizations of   stable continua  in  maximal Fatou chains for geometrically finite polynomials.
	Factually, any combinatorial issue can be  first addressed in the postcritically finite case and then extended to geometrically finite case via the following result, established independently by Kawahira \cite{Ka} and Cui-Tan \cite{CT2} using different methods.
	
	\begin{theorem}\label{thm:CT}
		Let $f$ be a geometrically finite polynomial in $\CCCC_d$. Then, there exists a unique postcritically finite polynomial $g\in\PPPP_d$, and a homeomorphism $\phi:\C\to\C$ with $\phi(z)/z\to 1$ as $z\to\infty$, such that $\phi(J_f)=J_g\textup{ and }\phi\circ f=g\circ \phi\textup{ on }J_f.$
	\end{theorem}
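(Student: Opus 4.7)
The plan is to build a postcritically finite model $g$ of $f$ by dynamical surgery that replaces every non-superattracting cycle in $F_f$ by a superattracting cycle of the same combinatorial type, while leaving $U_f(\infty)$ unchanged. Since $f$ is geometrically finite, $J_f$ is locally connected, and the only non-hyperbolic features of $f$ in the Fatou set are finitely many attracting cycles together with finitely many parabolic cycles on $J_f$; these are precisely the data the surgery will rigidify. The identity on $U_f(\infty)$ will provide the normalization $\phi(z)/z\to 1$ at infinity after matching B\"ottcher coordinates.

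For each immediate attracting domain $U$ of period $p$ and internal return degree $k$, one first builds a quasiconformal conjugacy between $f^p\colon U\to U$ and $z\mapsto z^k$ on $\D$, by combining the Koenigs linearization near the attracting cycle with an interpolation to the identity in a collar of $\partial U$ via a Blaschke product of degree $k$. Pulling this map back equivariantly to all iterated preimages of $U$ and extending by the identity on $J_f\cup U_f(\infty)$ gives a Beltrami coefficient $\mu$ of bounded dilatation that is $f$-invariant on $\C\setminus J_f$. The measurable Riemann mapping theorem integrates $\mu$ to a qc homeomorphism $\phi_1$, and $\phi_1\circ f\circ\phi_1^{-1}$ is a polynomial whose attracting cycles have become superattracting while the topological dynamics on $J_f$ is unchanged.

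The parabolic cycles force us outside the Ahlfors--Bers framework, since no quasiconformal map conjugates the germ $z\mapsto z+z^{k+1}+\cdots$ to $z\mapsto z^{k+1}$; the moduli of nested fundamental annuli degenerate at the parabolic point. The replacement is the trans-quasiconformal (David) surgery of Ha\"{\i}ssinsky, used in \cite{CT2}, and its variant via hyperbolic semiconjugacy in \cite{Ka}: on each immediate parabolic petal one constructs a David homeomorphism conjugating the return map to $z\mapsto z^{k}$ on $\D$ with exponentially integrable dilatation at the parabolic point. Pulling this back equivariantly through all iterated preimages and pasting with the identity on $U_f(\infty)$ produces a global David coefficient $\nu$; the David integration theorem then yields a homeomorphism $\phi_2$ of $\C$ and a postcritically finite polynomial $g$. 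Composing $\phi_2\circ\phi_1$ and renormalizing by the unique affine map that places $g$ in $\PPPP_d$ gives the required $\phi$, and the normalization at infinity is preserved because the surgery is the identity on $U_f(\infty)$.

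For uniqueness, suppose $(g',\phi')$ is another such pair; then $\phi'\circ\phi^{-1}$ conjugates $g$ and $g'$ on their Julia sets while preserving the external angle coordinate at infinity, so $g$ and $g'$ have identical rational laminations and identical ramification portraits on their Julia sets. Thurston rigidity for postcritically finite polynomials (any Levy or Thurston obstruction for $g$ would pull back through $\phi$ to an obstruction for $f$, which geometric finiteness forbids) then forces $g=g'$ in $\PPPP_d$. The main obstacle is the parabolic case: one must verify the David condition $\int e^{\alpha/(1-|\nu|)}<\infty$ near every parabolic point, and then upgrade the resulting measurable solution to a homeomorphism of $\C$ whose restriction to $J_f$ is continuous and genuinely conjugates the dynamics. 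These estimates are the technical heart of the theorem and the reason it was proved by two independent methods in \cite{Ka} and \cite{CT2}.
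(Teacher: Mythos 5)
The paper does not actually prove Theorem~\ref{thm:CT}; it quotes it from \cite{Ka} and \cite{CT2}, so there is no internal proof to compare against. Judged on its own terms, your sketch has a genuine and fatal flaw that occurs in both surgery steps: you describe each step as the construction of a \emph{conjugacy} between the return map $f^p\colon U\to U$ of a Fatou component and the model $z\mapsto z^k$ on $\D$, followed by pulling back the standard conformal structure through that conjugacy. No such conjugacy exists, quasiconformal, David, or even topological. For an attracting (non-superattracting) cycle, the fixed point of $f^p|_U$ has local degree $1$ while the fixed point of $z\mapsto z^k$ has local degree $k\ge 2$, and local degree is a topological conjugacy invariant. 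For a parabolic cycle the situation is worse: every orbit of $f^p|_U$ converges to the parabolic point on $\partial U$, so $f^p$ has \emph{no} fixed point in $U$, whereas $z\mapsto z^k$ fixes $0\in\D$; a homeomorphism $U\to\D$ intertwining the two maps would have to carry an empty fixed-point set onto $\{0\}$. The same obstruction shows why the overall strategy cannot be repaired by a cleverer choice of integrating map: if $\phi$ were a global homeomorphism of $\C$ with $\phi\circ f=g\circ\phi$ everywhere, then $g$ would still have a non-superattracting cycle (resp.\ a parabolic point, since the local topological picture of an attracting petal abutting the fixed point is preserved). The theorem only asserts conjugacy \emph{on $J_f$}, and any correct proof must modify the map itself inside the Fatou set --- cutting out the linearizing (resp.\ Fatou-coordinate) region and gluing in a superattracting model, so that the new model $F$ agrees with $f$ only near $J_f$ --- and then realize $F$ by a genuine polynomial. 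In the attracting case this realization is the classical quasiconformal straightening of a quasiregular model; in the parabolic case it is exactly the ``plumbing'' of \cite{CT2} (Proposition~\ref{pro:plumbing1} of this paper), which goes through Thurston-type realization of a semi-rational map (Theorem~\ref{thm:cui-tan-Thurston}) rather than through David integration, or alternatively Kawahira's direct construction of the Julia-set conjugacy by expansion. Your attribution of a David surgery to \cite{CT2} is also incorrect.

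Two smaller points. First, even granting a model map $F$, the conjugacy $\phi$ on $J_f$ does not fall out of the integration step; it must be built separately (by Rees--Shishikura-type convergence of pullbacks, or by Kawahira's tessellation/expansion argument), and showing it is continuous and injective on $J_f$ is where most of the work lies. Second, your uniqueness argument via ``obstructions pulling back through $\phi$'' is not the right mechanism: the correct route is that $\phi'\circ\phi^{-1}$ preserves external angles, hence $g$ and $g'$ have the same rational lamination and the same preperiodic critical portrait, and one then invokes the rigidity of postcritically finite polynomials with prescribed lamination (Poirier's theorem, as in Proposition~\ref{pro:same1}).
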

	
	For a  postcritically finite polynomial $g$, its {\it Hubbard tree} $T_g$  is the minimal $g$-invariant and finite regulated tree in $K_g$ containing $P_f$; see  \cite[Proposition 2.7]{DH1}. Here, a finite tree $T\subseteq K_g$ is called \emph{regulated} if for any edge of $T$, its intersection with the closure of any Fatou domain consists of one or two internal rays together with the landing points. It is well-known that
\begin{proposition}\label{pro:iterate}
Every cut point of $J_g$ is eventually iterated into $T_g$.
\end{proposition}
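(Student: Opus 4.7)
My plan is to show (i) every cut point of $J_g$ is preperiodic, (ii) every periodic cut point lies in $T_g$, and then conclude by forward iteration.

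For (i): since $g$ is postcritically finite with connected $K_g$, the Julia set $J_g$ is locally connected and the inverse Böttcher map extends to a continuous parameterization $\gamma:\R/\Z\to J_g$. A point $z$ is a cut point of $K_g$ iff $|\gamma^{-1}(z)|\geq 2$. For pcf polynomials the lamination on $\R/\Z$ induced by $\gamma$ has only rational nontrivial equivalence classes (each finite), so at any cut point at least two rational rays land, forcing $z$ to be preperiodic under $g$.

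For (ii): let $w$ be a periodic cut point; replacing $g$ by $g^p$, we may assume $w$ is fixed. Then $g$ is a local homeomorphism at the repelling fixed point $w$ and permutes the components $V_1,\dots,V_k$ of $K_g\setminus\{w\}$; passing to a further iterate, each $V_i$ is $g$-invariant. Membership $w\in T_g$ is equivalent to at least two distinct $V_i$ meeting $P_g$, since only then does a regulated arc between two postcritical points in different components cross $w$. Assume for contradiction that $P_g\subseteq V_1$; then for each $i\neq 1$, $V_i$ is a $g$-invariant connected subset of $K_g\setminus P_g$. Any Fatou component inside $V_i$ would be preperiodic by pcf-ness, and its periodic forward image would still lie in $V_i$ and contain an attracting periodic point, contradicting $V_i\cap P_g=\emptyset$; hence $V_i\subseteq J_g$. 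Except in the Chebyshev family (where $T_g=K_g$ and the proposition is immediate), $g$ is strictly expanding on $J_g$ with respect to the Thurston orbifold metric, so any $g$-invariant connected subset of $J_g\setminus P_g$ collapses to a point; yet $V_i$ is nonempty and open in $J_g$, which has no isolated points, a contradiction.

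For a general cut point $z$: if some iterate $g^n(z)$ is a critical point, then $g^{n+1}(z)\in P_g\subseteq T_g$ and we are done. Otherwise $g$ is a local homeomorphism along the entire orbit of $z$, so each iterate is again a cut point of $J_g$; by (i) the orbit is preperiodic, so some iterate is a periodic cut point, which by (ii) lies in $T_g$. The delicate step is the expansion/collapse argument in (ii) when $w$ itself is postcritical: then $V_i$ accumulates on $P_g$, its hyperbolic diameter in $\widehat{\C}\setminus P_g$ is infinite, and a naive global contraction argument breaks; one must invoke the Thurston orbifold metric, which remains well-defined and strictly expansive in a neighborhood of $P_g$ for any non-Chebyshev pcf polynomial, to produce the desired contradiction.
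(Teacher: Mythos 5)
There is a genuine gap, and it is fatal to the overall strategy: your claim (i), that every cut point of $J_g$ is preperiodic, is false. It is true that at a cut point of a locally connected Julia set at least two external rays land, but it is not true that the lamination of a postcritically finite polynomial ``has only rational nontrivial equivalence classes.'' By Kiwi's theorem (Lemma~4.2 of this paper), $\lambda(g)$ is the smallest \emph{closed} equivalence relation containing $\lambda_\Q(g)$, and the closure operation produces uncountably many nontrivial classes consisting of irrational angles. Concretely, for $g(z)=z^2+i$ the Julia set is a dendrite; every non-endpoint of a dendrite is a cut point, so $J_g$ has uncountably many cut points, while the preperiodic points form a countable set. (More generally, by Theorem~LP quoted in the introduction, any pcf polynomial with positive core entropy has uncountably many cut points.) Since your final step reduces a general cut point to a periodic one via preperiodicity of its orbit, the argument collapses for all of these non-preperiodic cut points. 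Step (ii) also has a soft spot: the components $V_1,\dots,V_k$ of $K_g\setminus\{w\}$ are \emph{not} forward invariant even after passing to an iterate fixing all rays at $w$ --- only the local sectors at $w$ are permuted; a component $V_i$ may contain preimages of points of $V_j$, so $g(V_i)\not\subseteq V_i$ in general, and the expansion/collapse argument as written does not apply.

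The standard proof avoids preperiodicity altogether. Since $T_g$ is a full connected set containing all critical values, $g^{-1}(T_g)$ is again a connected finite regulated tree containing $T_g$, and the increasing union $\bigcup_{N\ge 0}g^{-N}(T_g)$ is dense in $J_g$ (its closure is a nonempty closed backward-invariant set). Given a cut point $z$, pick points $a,b\in g^{-N}(T_g)$ in two different components of $K_g\setminus\{z\}$; the regulated arc $[a,b]$ passes through $z$ and, because $g^{-N}(T_g)$ is a regulated tree, $[a,b]\subseteq g^{-N}(T_g)$. Hence $z\in g^{-N}(T_g)$, i.e.\ $g^N(z)\in T_g$. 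I would recommend replacing your argument with this one (or a citation to Douady--Hubbard/Poirier), since the preperiodicity route cannot be repaired.
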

	The Hubbard tree $T_f$ for a geometrically finite polynomial $f\in\CCCC_d$ is then defined as a finite tree in $K_f$ such that $P_f\subseteq T_f$ and $T_f\cap J_f=\phi^{-1}(T_g\cap J_g)$. Here $\phi$ and $g$ are the maps obtained in Theorem \ref{thm:CT}.
	
	\begin{proposition}\label{pro:cluster}
		Let $K$ be a full stable continuum of  a geometrically finite polynomial $f\in\CCCC_d$ with $\partial K\subseteq J_f$. Then the following statements are equivalent:
		\begin{enumerate}
			\item $K$ is contained in a maximal Fatou chain of $f$;
			\item each cut point of $K$ is preperiodic;
			\item $K$ contains at most countably many cut points.
						\item $K$ contains finitely many periodic cut points;
			
		\end{enumerate}
	\end{proposition}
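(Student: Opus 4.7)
The plan is to close the cycle of implications $(1) \Rightarrow (2) \Rightarrow (3) \Rightarrow (4) \Rightarrow (1)$. For $(1) \Rightarrow (2)$, suppose $K$ lies inside a maximal Fatou chain $E$ of $f$. The first step is to check that every cut point $z$ of $K$ is also a cut point of $E$: if $K \setminus \{z\} = A \sqcup B$, the local connectedness of $J_f$ (which holds since $f$ is geometrically finite) produces external rays landing at $z$ whose union with $\{z\}$ separates $A$ from $B$ in the plane; fullness of $K$ together with $\partial K \subseteq J_f$ then forces the same separation to persist in the larger set $E$. Applying Proposition \ref{pro:cut-point} to $E$---legitimate because geometric finiteness ensures each critical point in $J_f$ is preperiodic---yields that every cut point of $E$, and hence of $K$, is preperiodic.

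The implication $(2) \Rightarrow (3)$ is immediate from the countability of the set of preperiodic points. For $(3) \Rightarrow (4)$, I would pass through the postcritically finite model: by Theorem \ref{thm:CT}, the conjugating homeomorphism $\phi$ transfers the cut-point structure of $K$ to a full stable continuum $\phi(K)$ of the associated postcritically finite polynomial $g$, so it suffices to prove that a stable continuum of $g$ with countably many cut points has only finitely many periodic ones. Since a periodic cut point of $g$ is a point where two or more external rays with periodic angles land, and every cut point of $J_g$ is eventually iterated into the Hubbard tree $T_g$, an induction modeled on the one in Proposition \ref{pro:cut-point}---using finiteness of $T_g$ to bound the degree combinatorics at each level---limits the count of periodic cut points to a finite number.

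For $(4) \Rightarrow (1)$---the main direction---let $Z \subseteq K$ denote the finite set of periodic cut points of $K$, choose $p$ so that $f^p(K) \subseteq K$ and $f^p$ fixes $Z$ pointwise, and decompose $K$ along $Z$ into finitely many full closed pieces $K_1, \ldots, K_s$, each itself a full stable continuum of some iterate of $f$ with no periodic cut points of its own. By Proposition \ref{pro:renormalization}, combined with Theorem \ref{thm:CT} to access the combinatorics of the postcritically finite model, each $K_i$ is identified as a Fatou chain generated by a single periodic Fatou domain. Reassembling the $K_i$ along $Z$ places $K$ inside a level-$n$ periodic Fatou chain for some $n$, hence inside a maximal Fatou chain by Proposition \ref{pro:max}. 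The main obstacle is this final identification: ruling out exotic full stable continua with no periodic cut points that are not themselves Fatou chains. I would circumvent it by passing through the postcritically finite model via Theorem \ref{thm:CT}, where the Hubbard tree delivers the combinatorial rigidity needed to pin down each $K_i$'s structure.
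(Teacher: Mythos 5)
Your forward implications $(1)\Rightarrow(2)\Rightarrow(3)$ follow the paper's route: the paper first proves that for any $x,y\in K$ the regulated arc $[x,y]\subseteq K_f$ lies in $K$ (using fullness, stability, and $\partial K\subseteq J_f$), concludes that cut points of $K$ are cut points of $K_f$, and then invokes Proposition~\ref{pro:cut-point}; your separation-by-ray-pairs argument is essentially the same and also delivers $(1)\Rightarrow(4)$. The problem is the converse direction, where your proposal has a genuine gap.

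For $(4)\Rightarrow(1)$ you cut $K$ along the finite set $Z$ of periodic cut points and assert that each resulting piece $K_i$ ``is identified as a Fatou chain generated by a single periodic Fatou domain'' via Proposition~\ref{pro:renormalization}. That proposition runs in the opposite direction: it says maximal Fatou chains induce renormalization; it gives no criterion for recognizing a stable continuum without periodic cut points as a Fatou chain, and a priori a piece $K_i$ need not contain any Fatou domain at all. You name this exact issue (``ruling out exotic full stable continua with no periodic cut points that are not themselves Fatou chains'') as the main obstacle and then defer it to unspecified ``combinatorial rigidity'' of the Hubbard tree --- but that deferred step \emph{is} the theorem. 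The paper's actual mechanism is Lemma~\ref{lem:non-cluster}: any regulated arc not contained in a maximal Fatou chain admits, via expansion in the orbifold metric and a pigeonhole argument on a finite collection of arcs in $T_f$, two disjoint subarcs mapped homeomorphically onto a common arc; this horseshoe produces \emph{uncountably many} cut points and periodic cut points of \emph{unbounded period}. Combined with the regulated-arc claim (which puts such an arc inside $K$ whenever $K$ is not in a maximal Fatou chain), this simultaneously proves $\neg(1)\Rightarrow\neg(3)$ and $\neg(1)\Rightarrow\neg(4)$, closing the equivalences without ever decomposing $K$. Your intermediate step $(3)\Rightarrow(4)$ suffers from the same omission: the claimed ``induction using finiteness of $T_g$'' is not carried out, and I do not see how to bound the number of periodic cut points by countability of all cut points without the expansion/pigeonhole lemma (or without first deducing $(1)$). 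Also, minor but real: it is not justified that the pieces $K_i$ are stable continua of an iterate of $f$, since $f^p$ fixing $Z$ pointwise does not prevent the image of a piece from spreading across several pieces.
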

	
	The proof of this proposition is based on the following lemma.

	\begin{lemma}\label{lem:non-cluster}
		Let $f$ be a postcritically finite polynomial. Then, given any regulated arc $I_0\subseteq K_f$ not lying in any maximal Fatou chain, there exist arcs $I'\subseteq I_0$, $ I=[x,y]\subseteq K_f$, two disjoint sub-arcs $I_1, I_2\subseteq I$ and positive integers $l, m_1, m_2$, such that
		\begin{enumerate}
			\item $x,y$ are pre-repelling and their orbits avoid the critical points;
			\item $f^l:I'\to I$ is a homeomorphism;
			\item $I$ is disjoint from $P_f$ and branched points of the Hubbard tree $T_f$;
			\item the maps $f^{m_k}:I_k\to I$ are homeomorphisms for $k=1, 2$.
		\end{enumerate}
		As a consequence, for any $n>0$, there exists a preperiodic point $z_n\in I'$ whose period exceeds $n$.
	\end{lemma}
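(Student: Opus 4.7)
The strategy is to exploit the hypothesis that $I_0$ escapes every maximal Fatou chain in order to produce, after some iterate of $f$, a ``generic'' arc $I$, and then to build two disjoint Markov subarcs of $I$ by pulling back near its endpoints; the consequence will then follow from the resulting full-shift structure.

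First, I would locate a point $w\in I_0\cap J_f$ lying outside every maximal Fatou chain. Such $w$ exists since there are at most countably many maximal Fatou chains (they are pairwise disjoint), each is closed, and none contains $I_0$, so $I_0$ cannot be covered by their union. After replacing $I_0$ by a small regulated subarc around $w$ I may assume that $I_0$ itself is close to $w$. Then, using that repelling preperiodic points whose forward orbits avoid $C_f$ are dense in $J_f$, and that $P_f$ together with the branched points of the Hubbard tree $T_f$ is a finite set, I would choose a subarc $I'\subseteq I_0$ and an integer $l$ so that $f^l$ is injective on $I'$, $I := f^l(I')$ is a regulated arc $[x,y]$ with $x,y$ pre-repelling and forward-orbit avoiding $C_f$, and the interior of $I$ is disjoint from $P_f$ and from the branched points of $T_f$. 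Injectivity is arranged by shrinking $I'$ to avoid the finitely many critical points of $f^l$ it might carry, while the endpoints are pinned down by nudging to nearby pre-repelling, non-precritical points provided by density. This yields items (1)--(3).

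For item (4), I would use linearization at the repelling periodic cycles underlying $x$ and $y$: for suitably large $m_1$ (respectively $m_2$) there is an inverse branch of $f^{m_1}$ (respectively $f^{m_2}$) carrying $I$ homeomorphically onto a subarc $I_1\subseteq I$ abutting $x$ (respectively $I_2\subseteq I$ abutting $y$), and enlarging $m_1,m_2$ makes $I_1,I_2$ shrink toward the two distinct endpoints, hence eventually disjoint. This is the step I expect to be the main technical obstacle: one must verify that each pullback arc remains inside $I$ rather than branching off into a transverse edge of the Hubbard tree, which relies in an essential way on the absence of branched points of $T_f$ in the interior of $I$ together with the regulated-tree structure at the endpoints.

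Finally, setting $F := f^{m_k}$ on $I_k$ gives a Markov system on $I_1\cup I_2$ with a full two-symbol shift as its itinerary coding; by the standard fixed-point argument this admits periodic orbits of every combinatorial length $N$, whose $f$-period is at least $N\min(m_1,m_2)$. Pulling back by $(f^l|_{I'})^{-1}$ transports such a point into $I'$ as a preperiodic point of $f$ of $f$-period $>n$ once $N$ is chosen large, giving the stated consequence.
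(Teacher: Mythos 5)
Your reduction to items (1)--(3) is broadly workable (though you should note that a point of $I_0$ outside every maximal Fatou chain has its whole forward orbit outside every maximal Fatou chain, by stability of the chains, and you still need the arc $f^l(I')$ -- not just its interior -- to avoid the finite set $P_f\cup\{\text{branch points of }T_f\}$). The genuine gap is item (4), exactly the step you flag as the main obstacle. Your plan is to pull $I$ back by inverse branches of $f^{m_k}$ ``at'' the endpoints $x$ and $y$. This fails for two reasons. First, $x$ and $y$ are only pre-repelling; if they are strictly preperiodic there is no inverse branch of any iterate fixing them, so there is no pullback arc ``abutting $x$'' at all. Second, even if you arrange $x,y$ to be repelling periodic and the return map to fix the local arm of $K_f$ containing $I$, the component of $f^{-m}(I)$ at $x$ agrees with $I$ only in an initial germ: beyond that it is the regulated arc $[x,\tilde y]$ with $f^{m}(\tilde y)=y$, which can leave $I$ at a cut point of $K_f$ interior to $I$. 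Excluding branched points of $T_f$ from $I$ does not exclude branched points of $K_f$ (iterated preimages of branch points of $T_f$ are dense in $J_f$), so nothing forces $I_1\subseteq I$.

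The paper's proof is structured precisely to avoid this. It works entirely inside the forward-invariant Hubbard tree: it sets $X=X_1\cup X_2\cup P_f$ (maximal Fatou chains meeting $C_f\cup P_f$, branch points of $T_f$, and $P_f$), takes a finite collection $\Gamma$ of short arcs in $T_f\setminus X$ with one endpoint in $X$ and one in the set $Y$ of points whose orbits avoid $X$, and uses orbifold expansion plus a pigeonhole argument on $N+1$ nested pullbacks into $\Gamma$ to produce a returning arc $f^m:\alpha\to\beta$ with $\alpha\Subset\beta$. This yields repelling \emph{periodic} points of arbitrarily large period in the \emph{interior} of arcs of $\Gamma$, with orbits avoiding $X$. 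The arc $I=[x,y]$ is then chosen inside some $\beta\in\Gamma$ containing two such periodic points $z_1,z_2$ of large periods $m_1,m_2$, and $I_i$ is the component of $f^{-m_i}(I)$ containing $z_i$: it stays in $T_f$ (hence in $I$) because it is small, based at $z_i\in I\subseteq T_f$, and avoids $P_f$ and the branch points of $T_f$. Producing these interior high-period periodic points with controlled orbits is the content you are missing; the density of pre-repelling points near $w$ gives no control on whether their pullbacks return into $I$. To repair your argument you would need to reconstruct something equivalent to the paper's returning-arc/pigeonhole step.
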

	A result similar to this lemma is proved in \cite[Lemma 6.4]{LP} by Cantor-Bendixson decomposition of graphs. Here we present a direct proof based on the approach in \cite[Lemma 9.14]{GT}.
	\begin{proof}
		Let $X_1$ be the union of all maximal Fatou chains of $f$ containing critical or postcritical points, and let $X_2$ be the set of branch points of $T_f$. Define $X = X_1 \cup X_2 \cup P_f$. Then $f(X) \subseteq X$.
		
		Let $Y$ be the set of points in $J_f$ whose orbits avoid $X$. Since $I_0$ is not contained in any maximal Fatou chain, we conclude that $K_f$ is not a maximal Fatou chain. It follows that $Y$ is non-empty and contains no isolated points.
		
		Since $f$ is postcritically finite, there exists $\lambda > 1$ such that for any arc $\gamma \subseteq K_f$ with $f^n: \gamma \to K_f \setminus X$ injective, we have
		\begin{equation}\label{eq:expanding}
			\mathrm{diam}_\omega(f^n(\gamma)) \geq \lambda^n \, \mathrm{diam}_\omega(\gamma),
		\end{equation}
		where $\mathrm{diam}_\omega(\cdot)$ denotes the homotopic diameter with respect to the canonical orbifold metric of $f$ (see \cite[Appendix A]{CGZ}).
		
		Let $\Gamma$ be a maximal collection of pairwise disjoint open arcs $\beta \subseteq T_f \setminus X$
such that one endpoint of $\beta$ belongs to $X$ and the other to $Y$, and that $\mathrm{diam}_\omega(\beta) < \kappa$ for a sufficiently small constant $\kappa$.
		Then $\Gamma$ is finite, and each $Y \cap \beta$ is non-empty and has no isolated points.
		
		Let $\gamma \subseteq K_f$ be any regulated arc not contained in a maximal Fatou chain of $f$. Then $\gamma \cap Y \neq \emptyset$.\vspace{5pt}

		{\bf Claim}.   \emph{For any $y \in Y \cap \gamma$ and any $\epsilon > 0$, there exist an open arc $\alpha \Subset \gamma$ in the $\epsilon$-neighborhood of $y$ and $\beta \in \Gamma$ such that $f^m: \alpha \to \beta$ is a homeomorphism.}

		\begin{proof}[Proof of the claim.]
			Choose $n_\epsilon$ such that $\kappa / \lambda^{n_\epsilon} < \epsilon/2$.
			By expansivity \eqref{eq:expanding} and Proposition \ref{pro:iterate}, there exist an open arc $\alpha_0 \subseteq \gamma$ containing $y$ and $m \geq n_\epsilon$, such that $f^m(\alpha_0)$ lies in $T_f$ and intersects $X$. Since $f^m(y) \notin X$, some $\beta \in \Gamma$ intersects $f^m(\alpha_0)$. Let $\alpha$ be the component of $f^{-m}(\beta)$ intersecting $\alpha_0$. Then $\mathrm{diam}_\omega(\alpha) < \epsilon/2$ by \eqref{eq:expanding}. As $\beta$ is disjoint from $P_f$, $f^m$ is injective near $\alpha$. Since $\beta$ avoids branch points of $T_f$, we have $\alpha \subseteq \gamma$.
		\end{proof}
		
		Let $N = \#\Gamma$. Fix any $y \in \gamma \cap Y$ and $\epsilon > 0$. Successively applying the claim, we obtain nested sub-arcs $\alpha_1 \Supset \alpha_2 \Supset \cdots \Supset \alpha_{N+1}$ of $\gamma$ in the $\epsilon$-neighborhood of $y$ and elements $\beta_1, \ldots, \beta_{N+1} \in \Gamma$ such that $f^{n_k}: \alpha_k \to \beta_k$ is a homeomorphism with $n_k < n_{k+1}$.
		
		By the pigeonhole principle, there exist $i < j$ such that $\beta_i = \beta_j$. Set $\beta = \beta_i$, $m = n_j - n_i$, and $\alpha = f^{n_i}(\alpha_j)$. Then $\alpha \Subset \beta$ and $f^m: \alpha \to \beta$ is a homeomorphism. Thus $f^m$ has a repelling fixed point in $\alpha$ whose orbit avoids $X$. This implies $\alpha_j$ contains a pre-repelling point in $Y$. Varying $y$ and $\epsilon$, we conclude that
		\begin{enumerate}
			\item pre-repelling points are dense in $Y \cap \gamma$; and
			\item the periods of these pre-repelling points tend to infinity.
		\end{enumerate}
		
		By  the discussion above, there exist arcs $\alpha \subseteq \gamma$ and $\beta \in \Gamma$ such that $f^l: \alpha \to \beta$ is a homeomorphism, and $\beta$ contains periodic points $z_1, z_2 \in Y \cap \beta$ with large periods $m_1, m_2$. Since $\gamma$ is arbitrary, Statement (1) also applies to $\beta$. Thus we obtain pre-repelling points $x, y \in Y \cap \beta$ with $z_1, z_2 \in I = [x,y]$. By expansivity \eqref{eq:expanding}, for $i = 1, 2$, the component $I_i$ of $f^{-m_i}(I)$ containing $z_i$ satisfies $I_i \Subset I$ and $\overline{I_1} \cap \overline{I_2} = \emptyset$.
		
		Finally, set $\gamma = I_0$ and $I' = (f^l_\alpha)^{-1}(I)$. This completes the proof of the lemma.
	\end{proof}

	\begin{proof}[Proof of Proposition \ref{pro:cluster}]
	We claim that for any $x,y\in K$, the regulated arc $[x,y]\subseteq K_f$ is contained in $K$.
Suppose this fails. Then $\mathbb{C}\setminus\big([x,y]\cup K\big)$ contains a bounded domain $D$, which must lie in $F_f$.
Since $\partial K\subset J_f$, the part of $\partial D\setminus [x,y]$ is contained in the boundary of some Fatou domain.
Because $K$ is $f$-invariant, there exists a periodic Fatou domain $U$ with $\partial U\subseteq K$.
As $K$ is full, $\overline{U}\subseteq K$. The stability of $K$ then implies $D\subseteq K$, a contradiction.
Thus the claim holds.

	This claim shows that every cut point of $K$ is also a cut point of $K_f$. 	Then combining Proposition \ref{pro:cut-point},  we have  $(1)\Rightarrow (2)\Rightarrow (3)$ and $(1)\Rightarrow (4)$.
	
	On the other hand, if $K$ is not contained in a maximal Fatou chain, the claim above implies that $K$ contains a regulated arc of $K_f$ that does not lie in any maximal Fatou chain. It then follows from
	 Lemma \ref{lem:non-cluster} that $K$ contains uncountably many cut points and infinitely many periodic cut points. Therefore, Statements (3) and (4) fail.
	\end{proof}

	\section{Dynamical Perturbation of polynomials}\label{sec:pinching}

	In this section, we develop a surgical method to perturb a critical point from the Julia set into the Fatou set while preserving the underlying dynamics (Theorem \ref{thm:perturbation}).
	This construction will play a key role in the proof of Theorems A, B and C. 
	
	\subsection{Parabolic points and convergence of external rays}
	Let $D\ni 0$ be a disk in $\C$ and $f:D\to \C$ be a univalent holomorphic function with $f(0)=0$. Suppose that $f'(0)=1$. Then 
	\[f(z)=z+z^{\nu+1}+o(z^{\nu+1}),\ \text{as }z\to 0.\]
The integer $\nu\geq1$ is called the \emph{multiplicity} of $0$ as a parabolic fixed point. The dynamical structure of $f$ near $0$ is described by the  Leau-Fatou Flower Theorem, see \cite[Section 10]{Mil}.

\begin{theorem}[Leau-Fatou Flower Theorem]\label{thm:flower}
There exist $\nu$ pairwise disjoint disks $\PPP_1^+,\ldots,\PPP_\nu^+\subseteq D$, called \emph{repelling petals}, and $\nu$ pairwise disjoint disks $\PPP_1^-,\ldots,\PPP_\nu^-\subseteq D$, called \emph{attracting petals}, for $f$ at $0$, such that:
\begin{enumerate}
\item The closures of any two distinct repelling  (resp.\,attracting) petals intersect only at $0$, and these $2\nu$ petals together with $0$ forms an open neighborhood of $0$;
\item For each $k\in\{1,\dots,\nu\}$, $f(\PPP_k^-)\subseteq \PPP_k^-$ and $f^n\to0$ uniformly on $\PPP_k^-$ as $n\to\infty$. Moreover, every orbit of $f$ converging to $0$ eventually enters some $\PPP_k^-$;
\item For each $k\in\{1,\dots,\nu\}$, $f^{-1}(\PPP_k^+)\subseteq \PPP_k^+$ and $(f^{-1})^n\to0$ uniformly on $\PPP_k^+$ as $n\to\infty$. Moreover, every orbit of $f^{-1}$ converging to $0$ eventually enters some $\PPP_k^+$.
\end{enumerate}
Moreover, let $\{f_n:D\to\C\}$ be a sequence of holomorphic functions converging uniformly to $f$, such that each $f_n$ has a parabolic fixed point at $0$ with multiplier $1$ and multiplicity $\nu$. Then, for each sufficiently large $n$, there exist attracting petals $\PPP_1^-(f_n),\dots,\PPP_\nu^-(f_n)$ and repelling petals $\PPP_1^+(f_n),\dots,\PPP_\nu^+(f_n)$ for $f_n$ at $0$ such that the closure of $\PPP_k^\pm(f_n)$ converges to that of $\PPP_k^\pm$ as $n\to\infty$ in the Hausdorff metric, for each $k\in\{1,\dots,\nu\}$.
\end{theorem}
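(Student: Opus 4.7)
My plan is to follow the classical change-of-coordinates approach and then adapt it to handle the perturbation statement. After a linear normalization we may assume $f(z)=z+z^{\nu+1}+O(z^{\nu+2})$ near $0$. The key step is to pass to the branched coordinate $w=-1/(\nu z^\nu)$, which conjugates $f$ on each of the $\nu$ sectors $S_k=\{z\in D:\arg z\in(\tfrac{(2k-1)\pi}{\nu}-\varepsilon,\tfrac{(2k-1)\pi}{\nu}+\varepsilon)\}$ (attracting directions) and their complementary $\nu$ sectors (repelling directions) to a map
\[
\widetilde f(w)=w+1+O(|w|^{-1/\nu})
\]
defined on a neighborhood of $\infty$. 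Choosing $R>0$ large, the right half-plane $\{\re w>R\}$ is sent strictly into itself by $\widetilde f$, and the left half-plane $\{\re w<-R\}$ is sent strictly into itself by $\widetilde f^{-1}$. Pulling these back to the $z$-plane via the $\nu$ inverse branches of $w=-1/(\nu z^\nu)$ produces the $\nu$ attracting petals $\PPP_k^-$ and the $\nu$ repelling petals $\PPP_k^+$.

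Verifying the static properties (1) reduces to checking that on each half-plane the $\widetilde f$-orbits escape to $\mp\infty$, which follows from $\re(\widetilde f(w)-w)\to 1$; the dynamical statements (2)--(3) then come from transporting this back to $z$-coordinates and noting that any orbit of $f$ tending to $0$ has argument asymptotically forced into one of the $\nu$ attracting directions (by the asymptotic expansion $f(z)-z\sim z^{\nu+1}$), so it must eventually enter one of the $\PPP_k^-$. The fact that the $2\nu$ petals together with $0$ form a neighborhood of $0$ is an elementary geometric check using that the attracting and repelling sectors alternate and interlock.

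For the perturbation statement, the hypothesis that each $f_n$ has a parabolic fixed point at $0$ of multiplier $1$ and multiplicity $\nu$ means we may write $f_n(z)=z+a_n z^{\nu+1}+O(z^{\nu+2})$ with $a_n\to 1$. After a small $n$-dependent linear normalization (absorbing $a_n$) the same coordinate change $w=-1/(\nu z^\nu)$ conjugates $f_n$ to $\widetilde f_n(w)=w+1+o(1)$, where the error is now uniform in $n$ on the relevant neighborhood of $\infty$ because $f_n\to f$ uniformly on $D$. Choosing $R$ uniformly, the very same half-planes $\{\re w\gtrless \pm R\}$ are invariant for all sufficiently large $n$, and pulling them back through $w=-1/(\nu z^\nu)$ yields petals $\PPP_k^\pm(f_n)$ whose closures converge in Hausdorff metric to those of $\PPP_k^\pm$, since the boundaries (images of $\{\re w=\pm R\}$) depend continuously on $f_n$.

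The main obstacle I anticipate is purely technical: ensuring that the normalization absorbing $a_n$ and the implicit choice of the branch of $w=-1/(\nu z^\nu)$ can be made compatible for all $n$ large, so that $\PPP_k^\pm(f_n)$ is genuinely labeled by the same index $k$ as $\PPP_k^\pm$ (rather than being permuted in the limit). This is resolved by fixing the $\nu$-th root conventions on each sector once and for all and verifying that, because $a_n\to 1$, the attracting directions of $f_n$ (the $\nu$-th roots of $-1/a_n$) converge to those of $f$; this forces the matching of indices and gives the Hausdorff convergence on each petal separately.
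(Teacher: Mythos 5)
Your proposal is correct and is essentially the classical argument via the coordinate $w=-1/(\nu z^\nu)$ and invariant half-planes; the paper does not prove this theorem but cites it from Milnor (Section 10), where exactly this construction is carried out. Your treatment of the perturbation addendum — using the hypothesis that the multiplier and multiplicity are preserved to get $a_n\to 1$, hence uniform estimates on $\widetilde f_n(w)=w+1+o(1)$ and uniformly invariant half-planes — is the standard way to obtain Hausdorff convergence of the petals, and your concern about consistent labeling of the $\nu$-th root branches is correctly resolved by the convergence of the attracting directions.
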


If $f'(0)=e^{2\pi iq/p}\not=1$, where $0<q<p$ are coprime integers, then $(f^p)'(0)=0$ and we define the \emph{multiplicity of $f$ at $0$} to be the multiplicity of $f^p$ at $0$.

\begin{lemma}\label{lem:stable}
Let $\{f_n\}\subseteq\CCCC_d$ be a sequence converging to $g$, and let $z$ be a preperiodic point of $g$.
\begin{enumerate}
\item If $z$ is pre-repelling, then for every $\theta$ with $\pi_g(\theta)=z$, we have  $\overline{R_{f_n}(\theta)}\to\overline{R_g(\theta)}$ as $n\to\infty$ and  $\pi_{f_n}(\theta)$ is pre-repelling for all large $n$. If, in addition, $z$ is not pre-critical, then  $\pi_g(\theta)=\pi_g(\eta)=z$ implies $\pi_{f_n}(\theta)=\pi_{f_n}(\eta)=z_n$ for all sufficiently large $n$.\vspace{2pt}

\item If $z$ is a parabolic periodic point, and if each $f_n$ admits a parabolic point $z_n$ with the same period, multiplier and multiplicity as $z$, such that $z_n\to z$, then for every $\theta$ with $\pi_g(\theta)=z$, we have $\pi_{f_n}(\theta)=z_n$ for all sufficiently large $n$, and $\overline{R_{f_n}(\theta)}\to\overline{R_g(\theta)}$ as $n\to\infty$.
\end{enumerate}\end{lemma}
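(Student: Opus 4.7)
\emph{Proof plan.} Both parts rest on the principle that the stability of external rays at periodic angles propagates to their pre-periodic preimages. As a preparatory observation, the B\"ottcher coordinates $\chi_{f_n}$ converge to $\chi_g$ locally uniformly on $U_g(\infty)$, so $R_{f_n}(\theta)\to R_g(\theta)$ on every compact subset of $U_g(\infty)$; the whole difficulty therefore concerns the landing behaviour near $K_g$. I would reduce each case by writing $d^k\theta=\theta^*$ periodic with $\pi_g(\theta^*)=z^*=g^k(z)$ periodic, and then transferring the stability of $R_{f_n}(\theta^*)$ back to $R_{f_n}(\theta)$ by pulling back through a suitable branch of $f_n^{-k}$, using the uniform convergence $f_n^k\to g^k$ on compacta.

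For Part (1), the periodic case reduces to the classical fact that, at a repelling periodic point $z^*$ of $g$, the orbit portrait (the set of angles of periodic external rays landing on the cycle of $z^*$) is preserved under small perturbations: each $f_n$ has a nearby repelling periodic point $z^*_n$ of the same period, every ray $R_{f_n}(\theta^*)$ with $\pi_g(\theta^*)=z^*$ lands at $z^*_n$, and $\overline{R_{f_n}(\theta^*)}\to\overline{R_g(\theta^*)}$ in Hausdorff topology (see \cite[\S18]{Mil}). Pulling back through $f_n^k$ then yields convergence of the closures of $R_{f_n}(\theta)$ to that of $R_g(\theta)$ and the pre-repelling property of $\pi_{f_n}(\theta)$. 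When $z$ is not pre-critical, the branch of $g^{-k}$ sending $z^*$ to $z$ is a local homeomorphism, so for $n$ large it deforms uniquely to a branch of $f_n^{-k}$ sending $z^*_n$ to a single point $z_n\to z$; both rays $R_g(\theta)$ and $R_g(\eta)$ landing at $z$ perturb to rays landing at this common pre-image.

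For Part (2), the additional ingredient is Theorem \ref{thm:flower}: the assumed preservation of the parabolic data forces the attracting and repelling petals $\PPP_k^\pm(f_n^p)$ at $z_n$ to converge in Hausdorff topology to those of $g^p$ at $z$, where $p$ is the period of $z$. Any periodic ray $R_g(\theta^*)$ landing at $z$ eventually enters one of the repelling petals $\PPP_k^+$; inside that petal the dynamics of $g^p$ is conjugate, via the Fatou coordinate, to the translation $\zeta\mapsto\zeta+1$, and the ray becomes a well-defined periodic curve invariant up to shift. Reducing to the periodic case, convergence of the Fatou coordinates $\vp_{f_n^p}\to\vp_{g^p}$ on the repelling petal, combined with $R_{f_n}(\theta^*)\to R_g(\theta^*)$ on compacta of $U_g(\infty)$, forces $R_{f_n}(\theta^*)$ to enter $\PPP_k^+(f_n^p)$ and to land at $z_n$ for all sufficiently large $n$. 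The pre-periodic case is then handled by pullback as in Part (1).

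\emph{Main obstacle.} The delicate step is the parabolic case. The hard part is not convergence of the ray on compacta of $U_g(\infty)$, but ensuring that $R_{f_n}(\theta)$ lands at exactly $z_n$, rather than at another parabolic point of the cycle or at a point outside the cycle. This requires tracking how the identification of petals provided by Theorem \ref{thm:flower} transports the combinatorial labelling of landing rays from $g$ to $f_n$, and the most substantial step will be to upgrade Hausdorff convergence of petals to uniform convergence of the associated Fatou coordinates on compact subsets of each repelling petal, which then yields a canonical identification of periodic rays between $g$ and $f_n$.
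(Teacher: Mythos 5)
Your outline for Part (1) matches the paper's treatment: the paper simply cites the classical stability of rays landing at (pre-)repelling points (\cite[Proposition 8.1]{DH1}) and the pullback reduction you describe is the standard way that result is obtained, so there is nothing to add there. For Part (2) your route differs from the paper's in one substantive way, and the difference matters for how much work is left. You propose to pass to Fatou coordinates on the repelling petal, conjugate the return map to $\zeta\mapsto\zeta+1$, and then prove uniform convergence of the Fatou coordinates of $f_n^p$ to those of $g^p$; you correctly flag this convergence as the most substantial unproved step of your plan. The paper avoids Fatou coordinates entirely. Its argument is: by the perturbative part of the Flower Theorem (stated in the paper as the last paragraph of Theorem~\ref{thm:flower}, using that $z_n$ has the same multiplier $1$ and the same multiplicity, so there is no parabolic implosion), the repelling petals $\PPP_n$ of $f_n$ at $z_n$ converge in the Hausdorff metric to a repelling petal $\PPP$ of $g$ at $z$ containing the tail of $R_g(\theta)$. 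One then takes the single fundamental arc $\gamma_n$ of $R_{f_n}(\theta)$ between potentials $r_0/d$ and $r_0$; locally uniform convergence of the B\"ottcher coordinates puts $\gamma_n$ inside $\PPP_n$ for large $n$, and the entire lower tail of $R_{f_n}(\theta)$ is the union of successive $f_n$-lifts of $\gamma_n$, which stay in $\PPP_n$ because $f_n^{-1}(\PPP_n)\subseteq\PPP_n$ and accumulate only at $z_n$ because backward orbits in a repelling petal converge uniformly to the fixed point. This settles both the landing at exactly $z_n$ (your stated worry about the ray escaping to another point of the cycle) and the Hausdorff convergence of $\overline{R_{f_n}(\theta)}$, with no need to control Fatou coordinates. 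So your plan is workable, but its hardest step can be bypassed; if you keep the Fatou-coordinate route you must actually supply the convergence argument, whereas the petal-invariance argument closes the proof with only the tools already stated in Theorem~\ref{thm:flower}.
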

	\begin{proof}
Statement (1) is a well‑known result, see for instance \cite[Proposition 8.1]{DH1}. Statement (2) is proved by a similar argument.

After iteration we may assume that \(f(z)=z\), \(f_n(z_n)=z_n\) and \(f'(z)=f_n'(z_n)=1\).  
Suppose the external ray  
\[
R_f(\theta)=\{\chi_f^{-1}(re^{2\pi i\theta})\mid r>1\}
\]  
lands at \(z\), where \(\chi_f\) is the Böttcher coordinate of \(f\) and \(r\) is the \emph{potential} of \(\chi_f^{-1}(re^{2\pi i\theta})\).  
By Theorem~\ref{thm:flower} there exist a repelling petal \(\PPP\) of \(f\) at \(z\) and repelling petals \(\PPP_n\) of \(f_n\) at \(z_n\), such that the points of \(R_f(\theta)\) with potential \(\le r_0\) lie in \(\PPP\) and \(\overline{\PPP_n}\to\overline{\PPP}\) as \(n\to\infty\).

For each \(n\), let \(\gamma_n\) be the sub‑arc of \(R_{f_n}(\theta)\) with \(\gamma_n(0)\) having potential \(r_0/d\) and \(\gamma_n(1)\) having potential \(r_0\). Then \(f_n(\gamma_n(0))=\gamma_n(1)\). Since \(\chi_{f_n}\to\chi_f\) locally uniformly, we have \(\gamma_n\subseteq\PPP_n\) for all sufficiently large \(n\).  
Fix such an \(n\) and set \(\gamma_{n,0}=\gamma_n\). Inductively, for \(k\ge1\) let \(\gamma_{n,k}\) be the lift of \(\gamma_{n,k-1}\) by \(f\) based at \(\gamma_{n,k-1}(0)\). The concatenation \(\Gamma_n\) of the arcs \(\gamma_{n,k},\;k\ge0,\) consists precisely of those points of \(R_{f_n}(\theta)\) with potential \(\le r_0\). Theorem~\ref{thm:flower}\,(3) gives \(\Gamma_n\subseteq\PPP_n\) and converging to $z_n$, which completes the proof of (2).
\end{proof}

	\subsection{Dynamical Perturbation Theorem}
	By a \emph{last critical point} of a polynomial $f$, we mean a critical point $c$ in $J_f$ such that $f^k(c)\notin C_f$ for all $k\geq 1$. The collection of maximal Fatou chains of $f$ that contain  critical or postcritical points of $f$ is denoted by $\EEE_f$.
	
	\begin{theorem}[Dynamical perturbation]\label{thm:perturbation}
		Let $f\in\CCCC_d$ be  geometrically finite with non-empty bounded Fatou domains. Let $c^*$ be a last critical point of $f$. Then there exists a sequence of geometrically finite maps $\{f_n\}\subseteq \CCCC_d$ uniformly converging to $f$ such that:
		\begin{enumerate}
			\item \textup{(Critical point correspondence)} For every $c\in C_f$,  $f_n$ has a unique critical point $c_n$ with  ${\rm deg}(f,c)={\rm deg}(f_n, c_n)$ and $c_n\to c$ as $n\to\infty$, such that $c_n^*\in F_f$, and  $c_n\not=c_n^*$ lies in a parabolic domain or the boundary of a bounded Fatou domain precisely when $c$ does.\vspace{3pt}
			
			\item \textup{(Critical relation consistency)} For distinct $c,c'\in C_f$, if $f^k(c)=c'$, then $f_n^k(c_n)=c_n'$;
			if $c$ is not iterated to $c^*$, then $f^{m+l}(c)=f^{m}(c)$ implies $f_n^{m+l}(c_{n})=f_n^m(c_{n})$. \vspace{3pt}
			\item\textup{(Parabolic point correspondence)} For each parabolic periodic point $z$ of $f$, there is a unique parabolic point $z_n$ of $f_n$ converging to $z$ as $n\to\infty$, and sharing the same multiplier, period, and multiplicity as $z$ does for $f$.			
			
			\item \textup{(Fatou chain correspondence)} If $c^*$ lies in a maximal Fatou chain of $f$, then there exists a bijection $\iota_n:\EEE_f\to\EEE_{f_n}$ such that\vspace{2pt}
			\begin{itemize}
				\item $c\in K$ if and only if $c_n\in\iota_n(K)$ for any $c\in C_f$ and $K\in\EEE_f$;
				\item $\iota_n\circ f(K)=f_n\circ \iota_n(K)$ for any $K\in\EEE_f$.
			\end{itemize}
			\vspace{3pt}
			
			\item \textup{(Quasiconformal deformation and semiconjugacy)} If $c^*$ lies in the boundary of  a bounded Fatou domain, then there exists a polynomial $\t{f}\in\CCCC_d$ such that each $f_n$ is  quasiconformal conjugate to $\t{f}$. Moreover, there exists a continuous surjection $\phi:J_{\t{f}}\to J_f$ satisfying $$f\circ\phi=\phi\circ \t{f}\ \text{on}\ J_{\t{f}},$$ such that   $\t{z}=\phi^{-1}(z)$ is a singleton and $\pi_f^{-1}(z)=\pi_{\t{f}}^{-1}(\t{z})$ unless $f^k(z) = c^*$ for some $k \ge 0$, in which case $\phi^{-1}(z)$ consists of $d_z=\deg(f^{k+1}, z)$ distinct points $\t{z}_1,\ldots,\t{z}_{d_z}$ satisfying $$\pi_f^{-1}(z)=\bigsqcup_{j=1}^{d_z}\pi_{\t{f}}^{-1}(\t{z}_j).$$
					\end{enumerate}
	\end{theorem}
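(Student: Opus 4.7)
The theorem is a dynamical perturbation result whose natural strategy is a surgical construction of a model polynomial that ``unpinches'' $f$ along the grand orbit of the last critical point $c^*$, followed by approximating $f$ by a quasiconformal deformation path $\{f_n\}$ of that model. This complements the hyperbolic--parabolic deformation theory of \cite{CT2}: while that theory describes degeneration from hyperbolic to parabolic via pinching, here the viewpoint is reversed in order to split a last critical point off of the Julia set.

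My plan is to first reduce to the postcritically finite case using Theorem \ref{thm:CT}, which supplies a postcritically finite model $g$ with the same Julia combinatorics as $f$. Under $g$, the orbit of $c^*$ is strictly pre-periodic, landing either on a repelling cycle in $\partial U$ for some bounded Fatou domain (Case A) or on a repelling cycle that is a cut point in a Fatou chain (Case B). I would then define a rational lamination $\lambda_\Q(\widetilde{g}) \subsetneq \lambda_\Q(g)$ by severing exactly the identifications at the grand orbit of $c^*$; because $c^*$ is a \emph{last} critical point, this excision does not interact with other critical orbits, which is essential for the compatibility conditions (1)--(4). A realization theorem for rational laminations, obtained via Thurston's theorem applied to the branched-covering model built from the severed lamination, then yields a postcritically finite polynomial $\widetilde{g}$ in which the lift $\widetilde{c}^*$ now lies in the interior of a Fatou domain (Case A) or strictly inside an enlarged Fatou chain (Case B). The polynomial $\widetilde{f}$ of part~(5) is this $\widetilde{g}$.

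Next, I would construct $f_n$ by deforming $\widetilde{g}$ within its quasiconformal conjugacy class along a path that pinches the newly created Fatou structure at the grand orbit of $\widetilde{c}^*$ back down to points, degenerating onto $g$. This pinching deformation is exactly the mechanism of \cite{CT2}, and by continuity in $\PPPP_d$ the resulting family converges uniformly to $f$. Properties (1)--(2) then follow from the combinatorial construction because $\lambda_\Q(f_n)$ refines $\lambda_\Q(f)$ only along the orbit of $c^*$; property~(3) follows from Lemma \ref{lem:stable}(2) applied to the unchanged parabolic data; property~(4) follows by matching maximal Fatou chains across the collapsing semiconjugacy $\phi$, using Propositions \ref{pro:cluster} and \ref{pro:renormalization} together with the fact that $c^*$ is the \emph{only} critical datum being moved; and property~(5) is built into the construction.

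The main obstacle I expect lies in the realization step: verifying that the truncated lamination $\lambda_\Q(\widetilde{g})$ is genuinely realized by a polynomial, i.e., that no Thurston obstruction appears after unpinching a single critical orbit. Intuitively this should hold because unpinching \emph{simplifies} combinatorics, but a careful check is required, likely by arguing that any obstruction for $\widetilde{g}$ would descend through the collapse $\phi$ to an obstruction for $g$, contradicting the realizability of $g$. A secondary difficulty, special to Case B, is that the unpinching may open up a new Fatou domain rather than merely enlarging an existing one, forcing a nontrivial construction of the bijection $\iota_n$ in part~(4); controlling this will rely on the renormalization block surrounding $c^*$ furnished by Proposition \ref{pro:renormalization}, which localizes the modification and makes the chain correspondence canonical away from the affected grand orbit.
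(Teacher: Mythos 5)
Your high-level picture (sever the identifications of $\lambda_\Q(f)$ along the grand orbit of $c^*$, realize the new combinatorics by a Thurston-type theorem, then recover $f$ as a limit of quasiconformal deformations of the realized polynomial) matches the skeleton of the paper's argument, which performs a local topological surgery on $f$ inside shrinking disks around $c^*$ and its critical value, producing semi-rational maps $F_n$ realized as polynomials $f_n$. But two steps of your proposal would fail as written. First, the degeneration $f_n\to f$ is \emph{not} an instance of the pinching machinery of \cite{CT2}: that theory pinches an attracting periodic point against a periodic point on the boundary of its basin and produces a \emph{parabolic} cycle in the limit, whereas here the limit $f$ has $c^*$ at a strictly preperiodic point of $J_f$ and no new parabolic cycle is created. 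The paper states explicitly that Theorem \ref{thm:perturbation} is a hyperbolic--subhyperbolic deformation \emph{complementing} \cite{CT2}; the convergence $f_n\to f$ is a separate nontrivial argument (Proposition \ref{pro:convergence}, drawing on \cite{G,GZ}) and cannot be obtained ``by continuity in $\PPPP_d$'', since the $f_n$ come out of an abstract realization theorem. Second, reducing to the postcritically finite model $g$ of Theorem \ref{thm:CT} loses exactly the data you need: the conjugacy is only topological and only on the Julia set, so a perturbation of $g$ in $\PPPP_d$ yields no perturbation of $f$ in $\PPPP_d$, and Statement (3) (matching periods, multipliers and multiplicities of parabolic cycles) cannot even be formulated for $g$. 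The paper avoids both problems by leaving $f$ unchanged outside the disks $D_n(c^*)$, $D_n(a_j)$, so that each $F_n$ remains holomorphic, with the same parabolic germs, near every attracting and parabolic cycle.

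On the realization step you correctly identify the main obstacle, and your idea that an obstruction for the unpinched map should descend to one for $f$ is in the right spirit: the paper's Lemma \ref{pro:c-equivalent} pushes any Levy cycle or connecting arc off the invariant attracting region $\AAA_n$ and transfers it to an auxiliary postcritically finite model $G_n$ with an invariant Hubbard tree, which is unobstructed by Poirier's theorem. Note, however, that one must use the semi-rational theory of Theorem \ref{thm:cui-tan-Thurston} (Levy cycles \emph{and} connecting arcs), since the maps carry parabolic cycles. Finally, your treatment of (4) and (5) is too thin to count as a proof: (4) requires showing that the finitely many periodic cut points of the decorated forest $\HHH_f$ persist under the perturbation and still separate critical points lying in distinct maximal Fatou chains (the paper uses Lemma \ref{lem:non-cluster} together with Lemma \ref{lem:stable} for this), and (5) requires an explicit quasiconformal conjugacy between consecutive $f_n$'s followed by a limiting argument on preperiodic points and external rays; none of this is supplied by the lamination picture alone.
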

	
	We refer to each map $f_n$ in this theorem as a \emph{dynamical perturbation} of $(f,c^*)$.\vspace{3pt}
	
	The construction of this perturbation proceeds in three stages. First, we apply a topological surgery to $f$ to obtain a sequence of topological polynomials $F_n$ converging to $f$, under which the orbit of the critical point $c^*$
	converges to an attracting cycle. Next, we demonstrate that each $F_n$ is equivalent to a genuine  polynomial $f_n$. Finally, we prove that $f_n\to f$ as $n\to\infty$.
	
	A related approach was used in \cite{CT3,G,GZ}. The first and third stages have already been established for general geometrically finite rational maps \cite{G}. The second stage was verified in the case where $f$ is a subhyperbolic polynomial and $c^*$ is perturbed into the basin of infinity \cite{CT3, GZ}, as well as when $f$ is a subhyperbolic renormalizable Newton map and $c^*$
	is perturbed into the basins of super-attracting fixed points \cite{G}.
	
	In the proof of Theorem \ref{thm:perturbation}, we verify  the second stage for the case when $c^*$
	is perturbed into a bounded Fatou domain. The additional novelty of Theorem \ref{thm:perturbation} lies in showing that the resulting perturbed polynomials $f_n$ satisfy the  dynamical conditions (4) and (5). This theorem provides a hyperbolic–subhyperbolic deformation of polynomials, complementing the hyperbolic-parabolic deformation \cite{CT2} introduced in the next section.
	
\subsection{Proof of Theorem \ref{thm:perturbation}}
		The argument is organized into five steps. \vspace{2pt}

We select a disk within each Fatou domain of $f$ that contains critical or postcritical points, so that their union $\mathcal{A}$ is $f$-invariant, i.e., $f(\mathcal{A})\subseteq \mathcal{A}$, satisfying
\begin{itemize}
\item  $(C_f \cup P_f) \cap F_f\subseteq \AAA$,  
\item $\partial\AAA\cap J_f$ consists of all parabolic periodic points of $f$, and
\item $f^k|_\AAA$ uniformly converges to attracting or parabolic cycles as $k\to\infty$.
\end{itemize}
After suitably adjusting arcs of a Hubbard tree $T_f$ inside the Fatou set, we may assume that for every component $A$ of $\mathcal{A}$, the intersection $A \cap T_f$ is non-empty and connected, and that $T_f^* = T_f \cup \mathcal{A}$ is $f$-invariant. \vspace{5pt}

{\bf 1. Topological surgery.}
		\vspace{5pt}
Set $v^*=f(c^*)$. For each $n\geq1$, choose a disk $D_{n}(v^*)\ni v^*$ with diameter less than $1/n$. Since $K_f$ is full and locally connected, we may assume the disk satisfies that $\overline{D_n(v^*)}\cap K_f$ is full and any bounded Fatou domain $U$ with $v^*\notin \partial U$ is either contained in $\overline{D_n(v^*)}$ or intersects $\overline{D_n(v^*)}$ in at most one point.
This implies, when $v^*$ is disjoint from the boundary of any bounded Fatou domain, $\overline{D_n(v^*)}\cap T$ is a tree for any tree $T\subseteq K_f$.

Let $D_n(c^*)$ be the component of $f^{-1}(D_n(v^*))$ containing $c^*$. By passing to a subsequence if necessary, we may assume further that  $D_{n+1}(c^*)\Subset D_n(c^*)$ and $\overline{D_n(c^*)}\cap\overline{\AAA}=\emptyset$ for all $n$. Then $\overline{D_n(c^*)}\cap T_f^*=\overline{D_n(c^*)}\cap T_f$ is a tree.\vspace{1pt}
		
	Now we fix an $n\geq1$ and define a perturbation $F_n$ of $f_n$ by a topological surgery.\vspace{1pt}
	
	Note first that there exists $k_n\geq0$ such that $f^{-k_n}(T_f)\cap D_n(v^*)$ contains a point in the Fatou set. Therefore, by replacing $T_f$ with $f^{-k_n}(T_f)$ if necessary, we may assume that $T_f\cap D_n(v^*)$ intersect the Fatou set.
	
	Next, there exists a minimal $r \geq 1$ such that every $z \in f^{-r}(c^*) \cap T_f^*$ is neither a critical point nor a postcritical point of $f$ and $T_f^*$ is locally an arc near $z$.
	For simplicity, we may assume $r=1$. Let $a_1, \ldots, a_m$ be the preimages of $c^*$ in $T_f^*$, and denote by $D_n(a_j)$ the component of $f^{-1}(D_n(c^*))$ containing $a_j$. Then, $f:D_n(a_j)\to D_n(c^*)$ is a homeomorphism and $$T_{D_n(a_j)}:=\overline{D_n(a_j)}\cap T_f^*$$ is a tree.
		
\begin{figure}[http]
	\centering
	\begin{tikzpicture}
		\node at (0,0){ \includegraphics[width=16cm]{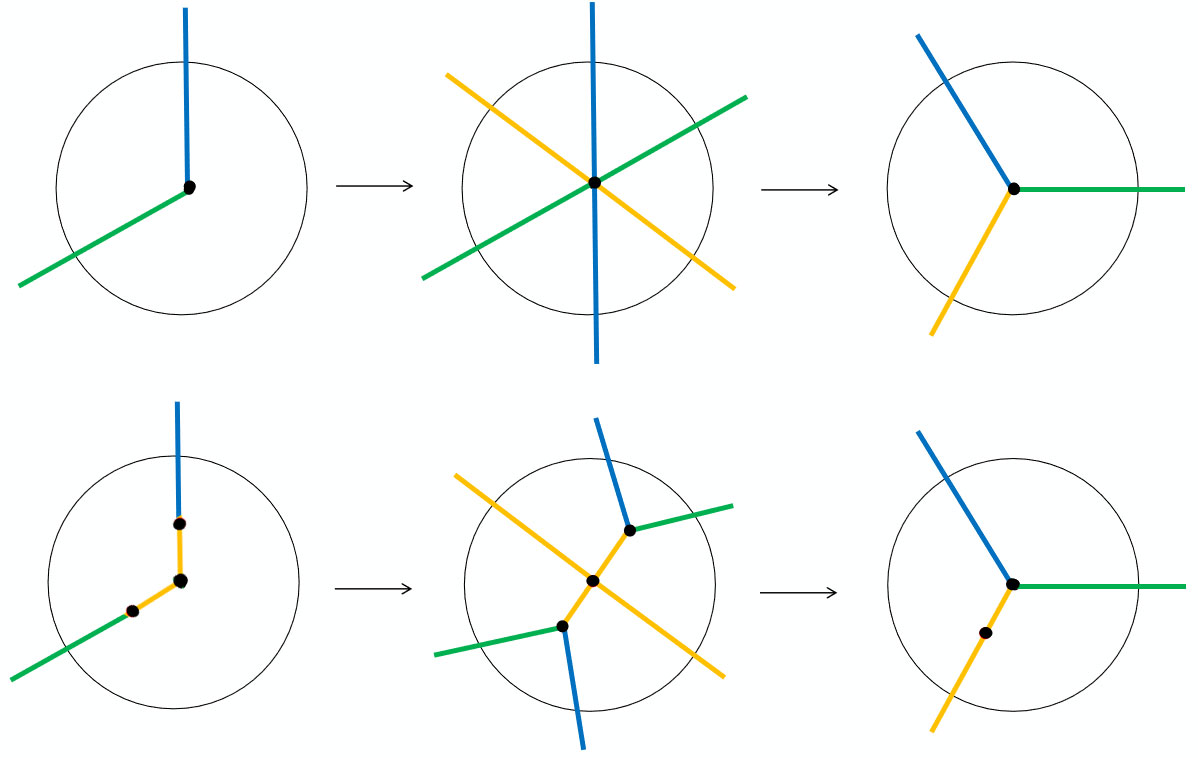}};
		\node at (-3, 3) {$f$};
		\node at (2.75, 3) {$f$};
		\node at (2.75, -2.5){$F_n$};
		\node at (-3,-2.5){$F_n$};
		\node at (-5.1, 2.5){$a_j$};
		\node at (0.4, 2.58){$c^*$};
		\node at (0.4, -2.7){$c^*$};
		\node at (5.9, 2.8){$v^*$};
		\node at (5.92, -3.5){$F_n(c^*)$};
	\end{tikzpicture}
	\caption{The construction of $F_n$ on $D_n(c^*)$ and $D_n(a_j)$.}\label{fig:Fn}
\end{figure}			
	The perturbation $F_n:\C\to \C$ of $f_n$ is a branched covering defined as follows (see Figure \ref{fig:Fn}):
		\begin{itemize}
			\item $F_n(z) = f(z)$ for $z \notin D_n(c^*) \cup \bigcup_{j=1}^m D_n(a_j)$;\vspace{1pt}
			\item $F_n: D_n(c^*) \to D_n(v^*)$ is a branched covering of degree $\deg(f,c^*)$ with the unique critical point $c^*$,  such that $v_n^*:=F_n(c^*) \in T_f \cap {F}_f$ and $f^k(v_n^*)\not\in D_n(c^*)$ for all $k\geq1$. Define $$T_{D_n(c^*)} = (F_n|_{\overline{D_n(c^*)}})^{-1}(T_f \cap \overline{D_n(v^*)}).$$
			It then follows that $T_{D_n(c^*)}$ is a tree containing $\partial D_n(c^*)\cap T_f^*$.
			\item For each $j$, the restriction $F_n: D_n(a_j) \to D_n(c^*)$ is a homeomorphism satisfying $F_n(a_j) = c^*$, and $F_n(T_{D_n(a_j)})$ is a subtree of $T_{D_n(c^*)}$.
		\end{itemize}
				
		By construction,   $C_{f}=C_{F_n}$. Define
		\begin{equation}\label{eq:22}
			P_n=P_{F_n}\cup \bigcup_{k\geq0} f^k(v^*) \quad\text{and}\quad T_{F_n}^*=(T_f^*\setminus D_n(c^*))\cup T_{D_n(c^*)}.
		\end{equation}
		Then both $P_n$ and $\AAA$ are contained in $T_{F_n}^*$, and all the three sets are $F_n$-invariant.  
		
		Thus $F_n$ is  \emph{semi-rational}: it is holomorphic near the infinity and  each cycle in $P_{F_n}$, which is either attracting or parabolic, and any attracting petal at a parabolic point of $F_n$ contains points of $P_{F_n}$. This definition of  semi-rational maps is slightly stronger than that in \cite{CT2}.
		
		\vspace{5pt}
		
		{\bf 2. Rational realization}\vspace{5pt}
		
		A \emph{marked semi-rational polynomial} is a pair $(F,P)$ where $F$ is a semi-rational polynomial and $P$ is a closed set containing $P_F$ with $F(P) \subseteq P$ and $\#(P \setminus P_F) < \infty$. Thus, the pair $(F_n, P_n)$ constructed in Step 1 is a marked semi-rational polynomial.
		
		Two marked semi-rational polynomials $(g_1,P_1)$ and $(g_2,P_2)$ are \emph{c-equivalent} if there exist homeomorphisms $\phi, \psi: \mathbb{C} \to \mathbb{C}$ and a neighborhood $W$ of the union of all attracting cycles and parabolic cycles of $g_1$ (including the infinity) such that:
		\begin{itemize}
			\item $\phi \circ g_1 = g_2 \circ \psi$ on $\mathbb{C}$;
			\item $\phi$ is holomorphic on $W$;
			\item $\phi$ and $\psi$ are isotopic rel $P_1 \cup \overline{W}$.
		\end{itemize}
	When $P_1 = P_{g_1}$ (so $P_2 = P_{g_2}$), we say $g_1$ is \emph{c-equivalent} to $g_2$.

The following theorem provides a criterion for determining when a marked semi-rational polynomial is c-equivalent to a marked polynomial.
	
A Jordan curve  $\alpha\subseteq \C\setminus P$ is called \emph{essential} if each component of $\C\setminus \alpha$ contains at least two points in $P\cup\{\infty\}$. An open arc $\beta\subseteq\C\setminus P$ is called \emph{essential} if its endpoints $z_0,z_1$ belong to $P$ and either $z_0\not=z_1$, or $z_0=z_1$ and both components of $\C\setminus \overline{\beta}$ intersects $P$.
A \emph{multicurve} in $\C\setminus P$ is  a finite collection $\G$ of pairwise disjoint, pairwise non-isotopic, essential Jordan curves in $\C\setminus P$.
		
		Let $(F,P)$ be a semi-rational polynomial. A multicurve $\{\g_1,\ldots,\g_p\}$ in $\C\setminus P$ is called a
		\emph{Levy cycle} of $(F,P)$ if for each $\g_i$,
		there exists a Jordan curve $\wt{\g}_i$  isotopic to $\g_i$ in $\C\setminus P$, such that $\wt{\g}_i$ is a component of $F^{-1}(\g_{i+1})$ and  $F$ maps $ \wt{\g}_i$ homeomorphically onto $\g_{i+1}$, where the indices are taken modulo $p$. We call $\{\wt{\g}_1,\ldots,\wt{\g}_p\}$  the \emph{adjacent multicurve} of the Levy cycle.
		
An essential open arc $\beta\subseteq\C\setminus P$ is called a \emph{connecting arc} of $(F,P)$ if $\beta$ is isotopic rel $P$ to a component of $F^{-p}(\beta)$ for some integer $p\geq1$, $\beta$ avoids attracting petals of all parabolic points, and the endpoints of $\beta$ are not attracting.
		
		\begin{theorem}[Thurston-Cui-Tan\cite{DH3,CT2}]\label{thm:cui-tan-Thurston}
			Let $(F,P)$ be a marked semi-rational polynomial. Then $(F,P)$ is c-equivalent to a marked polynomial if and only if it has neither Levy cycles nor connecting arcs. Moreover, such a polynomial is unique up to affine conjugacy.
		\end{theorem}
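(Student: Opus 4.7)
The plan is to split the argument into (i) necessity of the two obstructions, (ii) sufficiency via a Thurston-type iteration on Teichm\"uller space, and (iii) uniqueness up to affine conjugacy. For necessity, I would assume $(F,P)$ is c-equivalent to a marked polynomial $(f,P')$ via $\phi,\psi$, transport a hypothetical Levy cycle or connecting arc of $(F,P)$ through this equivalence, and show that a genuine marked polynomial admits neither. A Levy cycle for $(f,P')$ would yield essential Jordan curves $\g_1,\ldots,\g_p\subseteq\C\setminus P'$ each lifted homeomorphically by $f$, hence an $f^p$-invariant family of annular regions in the Fatou set whose long-term dynamics is incompatible with any cycle of $(f,P_f)$ (attracting, superattracting, parabolic, or at infinity). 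A connecting arc $\beta$ for $(f,P')$ would similarly give an essential arc whose iterated preimages remain isotopic rel $P'$; but its endpoints, being neither attracting periodic nor interior to repelling petals of parabolics, must be repelling preperiodic or parabolic, so expansion along $J_f$ together with the Leau--Fatou flower description (Theorem~\ref{thm:flower}) forces the preimage arcs to grow, a contradiction.

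For sufficiency, I would use Thurston's pullback iteration. Let $\TTT_P$ be the Teichm\"uller space of complex structures on $\C$ marked by $P$, and $\si_F:\TTT_P\to \TTT_P$ the holomorphic pullback induced by $F$. A fixed point of $\si_F$ produces, via uniformization, a Riemann surface structure on $\C$ in which $F$ is holomorphic, yielding a marked polynomial c-equivalent to $(F,P)$. In the postcritically finite attracting case (Thurston \cite{DH3}), the absence of Thurston obstructions forces $\si_F$ to contract the Thurston metric and the iterates $\si_F^n(\tau_0)$ to converge; for polynomials every Thurston obstruction reduces to a Levy cycle, so ruling out Levy cycles suffices in that setting.

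The main obstacle is handling the parabolic cycles permitted by the semi-rational hypothesis: near such cycles $\si_F$ fails to be uniformly contracting, and the standard pullback can diverge even in the absence of Levy cycles. Following the hyperbolic--parabolic deformation scheme of Cui--Tan \cite{CT2}, one replaces $F$ near each parabolic cycle by a local attracting model through a semi-conjugacy, enlarges the marked set $P$ in a controlled way by the resulting orbit data, and runs the iteration on the modified Teichm\"uller space. The crux is to show that any non-convergence of $\si_F^n(\tau_0)$ must be produced either by a collapsing multicurve---yielding a Levy cycle---or by a collapsing arc system whose endpoints are forced by the semi-conjugacy to be non-attracting and to avoid repelling petals---yielding a connecting arc. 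Excluding both obstructions therefore forces convergence and produces the desired marked polynomial. Uniqueness up to affine conjugacy is then standard: any two rational realizations are linked by a quasiconformal conjugacy that is conformal off the Julia set, and no-invariant-line-field rigidity for geometrically finite polynomials forces this conjugacy to be an affine map.
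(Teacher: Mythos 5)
This statement is not proved in the paper: it is quoted verbatim as a known result, attributed to Thurston (via Douady--Hubbard \cite{DH3}) for the postcritically finite case and to Cui--Tan \cite{CT2} for the general marked semi-rational case, and then used as a black box in Step~2 of the proof of Theorem~\ref{thm:perturbation}. So there is no in-paper argument to compare yours against; the relevant comparison is with the cited references.

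Judged on its own terms, your outline correctly identifies the architecture of the actual proofs: necessity by transporting an obstruction through the c-equivalence, sufficiency by Thurston's pullback iteration on Teichm\"uller space with Cui--Tan's modification near parabolic cycles, and the reduction (for topological polynomials) of general Thurston obstructions to Levy cycles. Two steps are too thin to pass as a proof rather than a plan. First, the reduction ``every Thurston obstruction of a topological polynomial yields a Levy cycle'' is itself a nontrivial theorem (Levy; Bielefeld--Fisher--Hubbard) and in the marked, non-postcritically-finite setting with parabolics it is exactly the content of the hard part of \cite{CT2}; asserting that ``any non-convergence of $\si_F^n(\tau_0)$ must be produced either by a collapsing multicurve or by a collapsing arc system'' is the crux, not a consequence of the scheme. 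Second, your uniqueness argument is circular as stated: you invoke a quasiconformal conjugacy between two realizations that is ``conformal off the Julia set,'' but c-equivalence only gives an isotopy rel $P\cup\overline W$, and promoting it to a conjugacy (let alone one conformal off $J$) already requires the pullback/lifting machinery; the uniqueness in \cite{DH3} comes from strict contraction of $\si_F$ on Teichm\"uller space, and in \cite{CT2} from a separate rigidity argument for geometrically finite maps. If you intend this as a genuine proof rather than a roadmap, those are the two places where the real work lives.
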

		
We remark that if the endpoints of a connecting arc $\beta$ of $(F,P)$ contain no parabolic points, then $\beta$ induces a Levy cycle of $(F,P)$.	
		
\begin{lemma}\label{pro:c-equivalent}
Let $Q\subseteq J_f\setminus P_f$ be a finite set such that $f(Q)\subseteq Q$. Then $(F_n,P_{n}\cup Q) $ has neither Levy cycles nor connecting arcs for each sufficiently large $n$.
\end{lemma}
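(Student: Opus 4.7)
The plan is to argue by contradiction: suppose, passing to a subsequence without relabeling, that $(F_n,P_n\cup Q)$ admits either a Levy cycle $\Gamma_n=\{\gamma_{n,1},\ldots,\gamma_{n,p_n}\}$ or a connecting arc $\beta_n$ for every $n$. I first want to pin down the limiting marking set. Since $v^*=f(c^*)\in P_f$, the entire orbit $\bigcup_{k\ge 0}f^k(v^*)$ is contained in $P_f$, and a direct case analysis---tracking each $c\in C_f$ according to whether its $f$-orbit meets one of the chosen preimages $a_1,\ldots,a_m$ of $c^*$---yields $P_f\subseteq P_n$ for every large $n$. The new points $P_n\setminus P_f$ are precisely the $F_n$-iterates $v_n^*,f(v_n^*),f^2(v_n^*),\ldots$ of $c^*$, which all lie in $F_f$: those with small index shadow the $f$-orbit of $v^*$ and converge pointwise to points of $P_f$, while those with large index lie in the terminal attracting or parabolic basin, accumulating on a cycle already in $P_f$. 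In particular, $P_n\cup Q$ Hausdorff-converges to the finite, $f$-invariant set $P_f\cup Q$.

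\medskip

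Next I extract a limiting combinatorial obstruction. Because each new marking of $P_n$ eventually lies in an arbitrarily small disk around a fixed point of $P_f$, for each large $n$ there is a homeomorphism $\psi_n:\C\to\C$ isotopic to the identity rel $P_f\cup Q$ with $\psi_n\to\mathrm{id}$ uniformly, collapsing each cluster of new markings back onto its $P_f$-point. This induces an injection from isotopy classes of essential curves (resp.\ arcs) in $\C\setminus(P_n\cup Q)$ into those in $\C\setminus(P_f\cup Q)$. Since the latter surface has finite Euler characteristic, only finitely many isotopy classes of multicurves and bounded-complexity arcs can be realized; after passing to a further subsequence the combinatorial type of $\Gamma_n$ (resp.\ $\beta_n$) is constant, and each $\psi_n(\gamma_{n,i})$ (resp.\ $\psi_n(\beta_n)$) represents a fixed class $[\gamma_i]$ (resp.\ $[\beta]$) in $\C\setminus(P_f\cup Q)$. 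Using the uniform convergence $F_n\to f$ on $\C$ (which follows from $F_n\equiv f$ outside the shrinking disks $D_n(c^*)\cup\bigcup_j D_n(a_j)$), the pullback relation transfers to the limit: the $F_n$-preimage component $\wt\gamma_{n,i}$ of $\gamma_{n,i+1}$ isotopic to $\gamma_{n,i}$, when pushed forward by $\psi_n$, converges to an $f$-preimage component $\wt\gamma_i$ of $\gamma_{i+1}$ with $\wt\gamma_i$ isotopic to $\gamma_i$ rel $P_f\cup Q$; an analogous statement holds for the connecting arc. Thus $\{\gamma_1,\ldots,\gamma_p\}$ is a Levy cycle, and $\beta$ a connecting arc, of the marked map $(f,P_f\cup Q)$.

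\medskip

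This yields the contradiction: $P_f\cup Q$ is a finite $f$-invariant set containing $P_f$, so $(f,P_f\cup Q)$ is a marked polynomial which is trivially c-equivalent to itself, and by Theorem \ref{thm:cui-tan-Thurston} it admits neither Levy cycles nor connecting arcs. The main obstacle is the limiting step above: one must control how $F_n$-preimage components that pass through the perturbation region $D_n(c^*)\cup\bigcup_j D_n(a_j)$ transfer, under $\psi_n$, to bona fide $f$-preimage components, because $F_n$ and $f$ differ exactly in those disks. The essential point is that the local branched covering structure of $F_n$ at $c^*$ and each $a_j$ has the same degree and combinatorial shape as that of $f$, so the surgery introduces no spurious preimage components; making this rigorous requires a careful description of how preimage components thread through the shrinking disks and attach to the new punctures $v_n^*$ and its nearby iterates.
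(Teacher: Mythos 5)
Your strategy is genuinely different from the paper's: you try to pass the obstruction to the limit map $f$ as $n\to\infty$ and contradict the fact that $(f,P_f\cup Q)$, being an honest marked polynomial, is unobstructed by Theorem \ref{thm:cui-tan-Thurston}. The paper instead works with each fixed large $n$ separately: it isotopes the Levy cycle (or connecting arc) off the attracting regions $\mathcal{A}_n$ (Claims 1--2), replaces $F_n$ by a postcritically finite model $G_n$ agreeing with $F_n$ outside $\mathcal{A}_n$ and carrying an invariant expanding tree, shows the obstruction survives for $(G_n,Q_n^*)$, and then invokes Poirier's realization theorem for Hubbard trees to conclude $(G_n,Q_n^*)$ is unobstructed --- no limiting process is needed.

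The problem is that the central step of your argument --- transferring the Levy cycle or connecting arc to the limit --- is not carried out, and several of the reductions you use to set it up are incorrect. First, $P_f$ and $P_n$ are in general infinite closed sets (the orbits of critical values in attracting or parabolic basins accumulate on cycles; the paper's definition of a marked semi-rational polynomial explicitly allows $\#P=\infty$ with $\#(P\setminus P_F)<\infty$), so ``$P_n\cup Q$ Hausdorff-converges to the finite set $P_f\cup Q$'' is false as stated. Second, even for a finite marked set there are infinitely many isotopy classes of essential simple closed curves on a punctured sphere; what is bounded is the number of components of a multicurve, not the number of classes it can realize. Hence ``only finitely many isotopy classes of multicurves \dots\ can be realized'' fails, and the subsequence extraction that fixes the combinatorial type of $\Gamma_n$ has no justification --- the Levy curves could degenerate (e.g.\ have hyperbolic length tending to infinity) as $n\to\infty$, and nothing in the proposal rules this out. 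Third, for connecting arcs the return time $p$ in ``$\beta$ is isotopic to a component of $F^{-p}(\beta)$'' depends on $n$ and is a priori unbounded, which defeats a naive limit of the pullback relation. These are exactly the reasons such perturbation statements are not usually proved by compactness in Thurston theory, and you acknowledge in your last paragraph that the key step remains to be made rigorous; as written, the proof has a genuine gap at its core.
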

\begin{proof}
By construction, the set $Q_n=P_n\cup Q$ is  a marked set of $F_n$ for all sufficiently large $n$. Fix such an $n$. Since the $F_n$-orbit of $c^*$ eventually enters $\AAA$ and $F_n(\AAA)\subseteq \AAA$, there exists a minimal integer $r_n\geq0$ such that $\AAA_n:=F_n^{-r_n}(\AAA)$ contains every critical points of $F_n$ iterated to $c^*$.

Suppose, contrary to the lemma, that $(F_n,Q_n)$ has either a Levy cycle $\{\gamma_1,\ldots,\gamma_p\}$ with adjacent multicurve $\{\widetilde{\gamma}_1,\ldots,\widetilde{\gamma}_p\}$, or a connecting arc $\beta$ whose endpoints contains parabolic points.
			
Since $F_n^k(\AAA_n)$ uniformly converges to attracting or parabolic cycles as $k\to\infty$, we may adjust each $\gamma_i$ (resp.\,$\beta$) within its isotopy class rel $Q_n$ so that $\gamma_i$ (resp.\,$\beta$) is disjoint from $F_n^{k_0+1}(\mathcal{A}_n)$, while preserves  minimal intersection  with $F_n^k(\mathcal{A}_n)$ for all $0\leq k\leq k_0$.\vspace{8pt}
			
			{\bf Claim 1}. \emph{For each $i\in\{1,\ldots,p\}$, the bounded component $D_i$ of $\C\setminus \g_i$ avoids $\AAA_n$. }
			
			\begin{proof}[Proof of Claim 1]
				We first show that $\gamma_i$ is disjoint from $\mathcal{A}_n$. Assume for contradiction that $\gamma_i \,\cap \mathcal{A}_n \neq \emptyset$. By minimality of the intersection, it follows that $\widetilde{\gamma}_i \cap \mathcal{A}_n \neq \emptyset$, and hence $\gamma_{i+1} \cap F_n(\mathcal{A}_n) \neq \emptyset$. Again, by minimality of the intersection of $\gamma_{i+1}$ with $F_n(\mathcal{A}_n)$, we deduce $\gamma_{i+2} \cap F_n^2(\mathcal{A}_n) \neq \emptyset$. Continuing this process, we eventually find some $\gamma_j$ intersecting $F_n^{k_0+1}(\mathcal{A}_n)$, contradicting the assumption on $\{\gamma_1, \dots, \gamma_p\}$.
				
				Since \(F_n(\mathcal{A}_n) \subseteq \mathcal{A}_n\), it follows that \(\widetilde{\gamma}_1, \ldots, \widetilde{\gamma}_p\) are disjoint from \(\mathcal{A}_n\). For each \(i\), let \(\widetilde{D}_i\) denote the bounded component of \(\mathbb{C}\setminus\widetilde{\gamma}_i\). Then
				\begin{equation}\label{eq:45}
					F_n: \widetilde{D}_j \to D_{j+1} \quad \text{is a homeomorphism for all } j \geq 1.
				\end{equation}
				
				Now suppose, contrary to the claim, that some \(D_i\) intersects a component \(A\) of \(\mathcal{A}_n\). The preceding argument shows \(A \subseteq D_i\), hence \(A \subseteq \widetilde{D}_i\). For each \(k\), let \(A_k\) be the component of \(\mathcal{A}_n\) containing \(F_n^k(A)\). Then \(D_{i+1}\) contains \(A_1\), and thus \(A_1 \subseteq \widetilde{D}_{i+1}\). By induction, \(\widetilde{D}_{i+k} \supseteq A_k\) for all \(k \geq 0\). On the other hand, some \(A_k\) contains critical points of \(F_n\), contradicting \eqref{eq:45}. This completes the proof of Claim 1.
			\end{proof}
			
A similar argument also shows that \vspace{8pt}
			
			{\bf Claim 2}. \emph{The connecting arc $\beta$ is disjoint from $\AAA_n$. }	\vspace{8pt}

			Now we define a postcritically finite semi-rational polynomial $G_n$, satisfying
			\begin{itemize}
				\item $G_n(z)=F_n(z)$ for $z\in\C\setminus \AAA_n$;
				\item each component of $\AAA_n$ contains at most one critical or postcritical point of $G_n$;
				\item $G_n$ has an invariant tree $T_{G_n}\supseteq P_{G_n}$, which coincides with $T_{F_n}^*$ outside of $\AAA_n$.
			\end{itemize}
			
			By the preceding claims and the construction of $G_n$, either the multicurve $\{\gamma_1,\ldots,\gamma_p\}$ is  still a Levy cycle for the marked semi-rational polynomial $(G_n, Q_n^*)$, where $Q_n^* = (Q_n \setminus \mathcal{A}_n) \cup P_{G_n}$, or $\beta$ is a connecting arc for $(G_n,Q_n^*)$. Furthermore, since $G_n$ has no parabolic cycles, the connecting arc $\beta$ also induces a Levy cycle of $(G_n,Q_n^*)$ by the remark before this lemma.
			
			On the other hand, by the construction of $T_{F_n}^*$ in \eqref{eq:22},
			one may verify directly that, if an edge of $T_{G_n}$ has endpoints $v, v'$ not iterated into $\mathcal{A}_n$, then some iterate $G_n^j$ maps $v$ and $v'$ to points that are not endpoints of any edge in $T_{G_n}$. Hence, $(G_n, Q_n^*)$ is c-equivalent to a marked polynomial by \cite[Theorem 1.1]{Poi}. By Theorem \ref{thm:cui-tan-Thurston}, $(G_n, Q_n^*)$ admits no Levy cycle, a contradiction.
			This completes the proof of the lemma.
		\end{proof}

		By Lemma~\ref{pro:c-equivalent} and Theorem~\ref{thm:cui-tan-Thurston}, for all sufficiently large $n$, there exists a unique polynomial $f_n \in \PPPP_d$ such that $F_n$ is $c$-equivalent to $f_n$. By the Rees-Shishikura Theorem~\cite{CPT}, the map $f_n$ satisfies Statements (1)--(3) of the theorem.
		
		\begin{proposition}\label{pro:convergence}
			The geometrically finite map $f_n$ uniformly converges to $f$ as $n\to\infty$.
		\end{proposition}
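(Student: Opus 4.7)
The plan is to deduce the analytic convergence $f_n \to f$ from the evident topological convergence $F_n \to f$, by exploiting that each $f_n$ is produced from $F_n$ by a normalized quasiconformal deformation whose Beltrami coefficient is supported on a shrinking set. This is the third stage of the perturbation scheme already carried out for general geometrically finite rational maps in \cite{G}; I will indicate how it specializes to the polynomial setting.

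First I would establish precompactness of $\{f_n\}$ in $\PPPP_d \simeq \C^{d-1}$. Since $F_n$ agrees with $f$ outside the disks $D_n(c^*) \cup \bigcup_{j} D_n(a_j)$ of diameter $O(1/n)$, the topological branched coverings $F_n$ converge uniformly to $f$ on $\C$, and the filled Julia sets $K_{F_n}$ lie in a fixed compact set for all large $n$. The c-equivalence homeomorphisms $\phi_n,\psi_n$ satisfy $\phi_n\circ F_n = f_n\circ \psi_n$ and identify $J_{F_n}$ with $J_{f_n}$; together with the normalization of $f_n$ as monic and centered, this forces $K_{f_n}$, and hence the coefficients of $f_n$, to stay bounded. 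Any subsequential limit thus produces some $g\in \PPPP_d$, and it suffices to prove $g=f$.

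To identify the limit, I would use that $\phi_n$ is holomorphic on a neighborhood $W$ of the union of attracting and parabolic cycles of $F_n$ (including $\infty$). The Thurston pullback construction underlying the proof of Theorem~\ref{thm:cui-tan-Thurston} realizes $\phi_n$ as a quasiconformal map on $\C$ whose Beltrami coefficient is supported in the iterated preimages $\bigcup_{k\ge 0} F_n^{-k}(D_n(c^*))$, a set of Lebesgue measure tending to $0$ as $n\to\infty$. A compactness argument in the Teichmüller space of $\C\setminus P_n$, using that $P_n\setminus P_{F_n}$ consists of finitely many points on the forward orbit of $v^*$ with combinatorics bounded independently of $n$, yields a uniform bound on $\|\mu_{\phi_n}\|_\infty$. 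The measurable Riemann mapping theorem with the normalization inherited from $\PPPP_d$ then forces $\phi_n,\psi_n\to\mathrm{id}$ locally uniformly on $\C$, and passing to the limit in $\phi_n\circ F_n = f_n\circ \psi_n$ gives $g=f$, so the full sequence converges.

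The main obstacle is exactly the uniform control of the quasiconformal dilatations used to realize the c-equivalence: topological convergence of $F_n$ is cheap, but promoting it to analytic convergence of $f_n$ requires an effective form of Thurston's contraction on the appropriate Teichmüller space. This is already in place, under the name of the Rees--Shishikura--Tan type theorem, for subhyperbolic polynomials perturbed into the basin of infinity in \cite{CT3,GZ} and for geometrically finite rational maps in \cite{G}. In the present setting only two points need to be verified by hand: the support of $\mu_{\phi_n}$ shrinks to measure zero, which follows from the construction $F_n=f$ off $D_n(c^*)\cup\bigcup_j D_n(a_j)$ and the fact that every critical orbit eventually meets $\AAA$; and the combinatorics of the marked set $P_n$ is bounded, which is immediate from $\#(P_n\setminus P_{F_n})<\infty$ together with the stabilization of iterated preimages afforded by Proposition~\ref{pro:max}.
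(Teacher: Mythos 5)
The paper does not actually prove this proposition: immediately after the statement it writes that the convergence ``was previously given in \cite[Proposition 3.3]{G} as well as \cite[Step IV of the proof of Theorem 1.1]{GZ}'', so there is no internal argument to compare yours against. Your overall strategy --- precompactness of $\{f_n\}$ (which is in fact immediate, since every $f_n$ lies in the compact set $\CCCC_d$), followed by identification of any subsequential limit $g$ with $f$ by degenerating the c-equivalences to the identity and passing to the limit in $\phi_n\circ F_n=f_n\circ\psi_n$ --- is indeed the strategy of the cited references, and since you, like the paper, ultimately defer to \cite{CT3}, \cite{GZ} and \cite{G} for the analytic core, the proposal is acceptable at the same citation level as the paper itself.

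The one step of your sketch that, read as a self-contained argument, has a genuine gap is the uniform bound on $\|\mu_{\phi_n}\|_\infty$. The proposed ``compactness argument in the Teichm\"uller space of $\C\setminus P_n$'' does not work as stated: $P_n\supseteq P_{F_n}$ is an \emph{infinite} set accumulating on the attracting and parabolic cycles, it varies with $n$, and the finiteness of $P_n\setminus P_{F_n}$ gives no control on $P_{F_n}$ itself, so there is no fixed finite-dimensional Teichm\"uller space in which to extract anything. What the references actually do is build an explicit initial quasiconformal c-equivalence whose dilatation is controlled by the surgery on $D_n(c^*)\cup\bigcup_j D_n(a_j)$ and then exploit that the Thurston pullback does not increase dilatation where $F_n$ is holomorphic; the delicate point --- and the real content of \cite{CT3} and \cite[Proposition 3.3]{G} --- is bounding the accumulated distortion along orbits that re-enter the surgery region, since generic Julia orbits return to any neighborhood of $c^*$ infinitely often and one cannot simply multiply a bounded number of dilatation factors. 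Your measure-zero claim for $\bigcup_{k\ge0}F_n^{-k}(D_n(c^*))$ is essentially correct but should explicitly invoke that geometrically finite Julia sets have Lebesgue measure zero, together with a reverse-Fatou argument to handle the simultaneous $n$-dependence of the disk and of the map $F_n$. So: keep the structure, delete the Teichm\"uller-space mechanism, and either cite the distortion control of \cite{CT3} for the dilatation bound or reproduce its orbit-counting argument.
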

		
		The proof of this convergence result was previously given in \cite[Proposition 3.3]{G} as well as \cite[Step IV of the proof of Theorem 1.1]{GZ}. Therefore, it remains to verify Statements (4) and (5).\vspace{5pt}
		
		{\bf 3. Correspondence of maximal Fatou chains.}
		\vspace{5pt}
		
		Recall that $\EEE_f$ denotes the collection of all maximal Fatou chains of $f$ containing a critical or postcritical point.
		Then
		\begin{equation}\label{eq:5555}
		\mathcal{H}_f := T_f^* \cap \bigcup_{E\in\mathcal{E}_f} E
		\end{equation}
		is an $f$-invariant (decorated) forest containing all critical  points in maximal Fatou chains. By Propositions \ref{pro:iterate} and \ref{pro:cluster}, the set $Q$ of  periodic points in $\HHH_f$ is  finite, and \vspace{5pt}
		
		 {\bf Claim 3}. \emph{The $f$-orbit of every point in $\HHH_f$ eventually enters $\AAA\cup Q$.}
		\vspace{5pt}

		Since $c^* \in \HHH_f$ by the condition in Statement (4), we may require $F_n(c^*)\in \HHH_f\cap F_f$. The modification from $T_f^*$ to $T^*_{F_n}$ in Step 1, when restricted to $\HHH_f$, yields an $F_n$-invariant forest $\HHH_{F_n}$ that differs from $\HHH_f$ only within $D_n(c^*)$.

		\vspace{5pt}
		
		 {\bf Claim 4}. \emph{For any preperiodic point $z\in \HHH_f\cap J_f$, there exists $\epsilon>0$ such that the  punctured $\epsilon$-neighborhood of $z$ is disjoint from $\HHH_f\cap \bigcup_{k\geq0}f^{-k}(z)$.}
		\begin{proof}[Proof of Claim 4]
			It suffices to consider the case that $z$ is periodic. Let $D_z$ be an open neighborhood of $z$ such that each component of $(\HHH_f\setminus\{z\})\cap D_z$ is either a disk in a parabolic domain, or an open arc, called a \emph{local edge} to $z$. If such a component  lies in $F_f$,  it clearly avoids $\bigcup_{k\geq0}f^{-k}(z)$.
			
		Now, let $\g:(0,1)\to\C$ be a local edge to $z$ with $\lim_{t\to0}\g(t)=z$ and $\g(0,\epsilon)\not\subseteq F_f$ for each sufficiently small $\epsilon>0$.
		Then
			 there exist $m \geq 1$, a level-$m$  Fatou chain $K$, and $\epsilon > 0$ such that:
			\begin{enumerate}
				\item $\g(0,\epsilon) \subseteq K$;
				\item $z \notin \bigcup_{k \geq 0} E_k$, where $E_0\subseteq K$ is the unique level-$(m-1)$  Fatou chain and $K=\overline{\bigcup_{k\geq0} E_k}$;
				\item every point of $\g(0,\epsilon)$ is mapped into $E_0$ under iteration.
			\end{enumerate}
			Hence, the iterated preimages of $z$ is disjoint from  $\g(0,\epsilon)$. 
			
			Since $\mathcal{H}_f$ has finitely many local edges to $z$, the claim 4 follows.
		\end{proof}
		Recall the assumptions adding on $\{a_1,\ldots,a_m\} = f^{-1}(c^*) \cap T_f^*$ in Step 1. By taking preimages, we may assume $a_1,\ldots,a_m\in  \mathcal{H}_{F_n}$. Define $H_n := D_n(c^*) \cap \mathcal{H}_{F_n}$.\vspace{5pt}
		
		 {\bf Claim 5}. \emph{For all sufficiently large $n$, the $F_n$-orbit of any point in $H_n$ avoids $D_n(c^*)$.}

		\begin{proof}[Proof of Claim 5]
		 
			\vspace{1pt}
			
			Choose $\epsilon, \epsilon' > 0$ such that Claim 4 holds for $c^*$ and $\epsilon$, and $\D_{\epsilon'}(v^*) \subseteq f(\D_\epsilon(c^*))$. By expansivity \eqref{eq:expanding}, there exists $\epsilon_0 < \epsilon'/2$ such that for any arc $\gamma \subseteq K_f \setminus \mathcal{A}$ with $\mathrm{diam}(\gamma) < \epsilon_0$, the diameters of components of $f^{-k}(\gamma)$ are less than $\epsilon_0$ for all $k \geq 0$.

				Suppose the claim does not hold. Then for some large $n$ with $D_n(v^*)\subseteq \D_{\epsilon_0}(v^*)$, there exist $w \in H_n$ and $k_0 > 1$ such that $F_n^{k_0}(w) \in H_n$. As a result, there exist $j \in \{1,\ldots,m\}$ and $z \in D_n(a_j) \cap \mathcal{H}_{F_n}=:\gamma_j$ with $F_n(z) = w$, such that $F_n^{k_0}(z) \in \gamma_j$.
				
				Let $I_{k_0} \subseteq \gamma_j$ be the arc from $a_j$ to $F_n^{k_0}(z)$. For $0 \leq k < k_0$, let $I_k$ be the lift of $I_{k_0}$ under $F_n^{k_0-k}$ based at $z_k := F_n^k(z)$. The other endpoint $y_k$ of $I_k$ belongs to $F_n^{-(k_0-k)}(a_j)$. Since $D_n(a_j) \cap P_{F_n} = \emptyset$,  $F_n$ is injective near each $I_k$, and as $\gamma_j$ contains no branch points of $T_{F_n}^*$, we have $I_k \subseteq \mathcal{H}_{F_n}$. 
				
				Since $F_n|_{I_k} = f|_{I_k}$ for $2 \leq k < k_0$, the arc $I_2$ is also a component of $f^{-(k_0-2)}(I_{k_0})$. Note that $z_2$ lies in the $\epsilon_0$-neighborhood of $v^*$. By choice of $\epsilon_0$, it follows that $I_2 \subseteq \D_\epsilon(v^*)$. Thus $\D_\epsilon(v^*) \cap \mathcal{H}_f$ contains a point $y_2 \in f^{-(k_0-2)}(a_j) \subseteq f^{-(k_0-1)}(c^*)$. As $\D_{\epsilon'}(v^*) \subseteq f(\D_\epsilon(c^*))$, the set $\D_\epsilon(c^*) \cap \mathcal{H}_f$ contains a point of $f^{-k_0}(c^*)$, contradicting Claim 4 by our choice of $\epsilon$. This proves  Claim 5.
		\end{proof}
		
As a corollary of Claim 5, for all sufficiently large \(n\), every \(F_n\)-orbit in \(\mathcal{H}_{F_n}\) eventually enters \(\mathcal{A}\cup Q\).  
Indeed, take any \(z\in\mathcal{H}_{F_n}\).  
\begin{itemize}
\item If the \(F_n\)-orbit of \(z\) avoids \(D_n(c^*)\), then \(z\in\mathcal{H}_f\) and its \(F_n\)-orbit coincides with its \(f\)-orbit; thus the claim follows directly from Claim 3.  
\item If \(F_n^k(z)\in D_n(c^*)\) for some \(k\ge0\), then by Claim 5 the orbit of \(F_n^{k+1}(z)\) never returns to \(D_n(c^*)\), and the conclusion again holds.
\end{itemize}
		
Fix a sufficiently large $n$. Recall that $Q$ is the finite set of all  periodic points in $\HHH_f$. By Lemma~\ref{pro:c-equivalent} and Theorem~\ref{thm:cui-tan-Thurston}, there exist a polynomial $f_n \in \PPPP_d$ and homeomorphisms $\psi_0, \psi_1: (\mathbb{C}, P_n \cup Q) \to (\mathbb{C}, P_n \cup Q)$ that are holomorphic in a neighborhood $W$ of all attracting or parabolic cycles of $F_n$ and the infinity, such that $\psi_0$ and $\psi_1$ are isotopic rel $W \cup P_n \cup Q$, and  $\psi_0 \circ F_n = f_n \circ \psi_1$ on $\mathbb{C}$.
		By the Homotopic Lifting Lemma, we obtain a sequence $\{\psi_k\}_{k \geq 0}$ satisfying
		\[
		\psi_k \circ F_n = f_n \circ \psi_{k+1} \text{ on } \mathbb{C},
		\]
		where each $\psi_k$ is isotopic to $\psi_{k+1}$ rel $F_n^{-k}(W \cup P_n \cup Q)$. By \cite[Lemma 2.2]{CPT}, these maps converge to a quotient map $\psi: \mathbb{C} \to \mathbb{C}$ such that
		\[
		\psi \circ F_n = f_n \circ \psi  \text{ on } \mathbb{C},
		\]
		and $\psi$ is injective on $\bigcup_{k \geq 0} F_n^{-k}(W \cup P_n \cup Q)$.
		
		By the corollary of Claim 5,  $Q$ coincides with the set of periodic points of $F_n$ within $\HHH_{F_n}$, and $$\mathcal{H}_{F_n} \subseteq \bigcup_{k\geq 0} F_n^{-k}(W \cup P_n \cup Q).$$
		Thus, $\psi$ conjugates $F_n|_{\mathcal{H}_{F_n}}$ to $f_n|_{\mathcal{H}_{f_n}}$ with $\mathcal{H}_{f_n} := \psi(\mathcal{H}_{F_n})$.
		In particular, $\mathcal{H}_{f_n}$ contains $N=\# Q$ periodic points. It is immediate that $\psi$ induces a bijection
		$$\varsigma_n: \textup{Comp}(\mathcal{H}_{f}) \to \textup{Comp}(\mathcal{H}_{f_n})$$ such that $c\in C_f$ lies in a component $H $ of $\HHH_f$ if and only if $c_n$ belongs to $\varsigma_n(H)$.
		
		There is a canonical bijection \(\varsigma: \textup{Comp}(\mathcal{H}_f) \to \mathcal{E}_f\) sending each component of \(\mathcal{H}_f\) to the maximal Fatou chain containing it.
		
		Since \(\mathcal{H}_{f_n}\) contains only finitely many periodic points, Proposition~\ref{pro:cluster}\,(4) implies that every component of \(\mathcal{H}_{f_n}\) belongs to some maximal Fatou chain of \(f_n\) within $\mathcal{E}_{f_n}$. This gives an inclusion \(\widetilde{\varsigma}_n: \textup{Comp}(\mathcal{H}_{f_n}) \to \mathcal{E}_{f_n}\).
		
		To show that \(\widetilde{\varsigma}_n\) is bijective, we first claim: for any two critical points \(c_n, c_n'\) of \(f_n\) in distinct components of \(\mathcal{H}_{f_n}\), there exists a pre-repelling point \(z_n\) of period greater than \(N\) such that \(c_n\) and \(c_n'\) lie in distinct components of \(K_{f_n}\setminus\{z_n\}\).
		
		Indeed, since \(c\) and \(c'\) are not in the same maximal Fatou chain of \(f\), by Lemma~\ref{lem:non-cluster} we can choose rational angles \(\theta,\eta\) with periods greater than \(N\) such that \(\pi_f(\theta)=\pi_f(\eta)=z\), the orbit of \(z\) avoids the critical points of \(f\), and the arc \(\overline{R_f(\theta)}\cup\overline{R_f(\eta)}\) separates \(c\) from \(c'\). Using Lemma~\ref{lem:stable}\,(2) and taking \(n\) large enough, we have \(\pi_{f_n}(\theta)=\pi_{f_n}(\eta)=z_n\) and the arc \(\overline{R_{f_n}(\theta)}\cup\overline{R_{f_n}(\eta)}\) separates \(c_n\) from \(c_n'\). This completes the proof of the claim.

		Let $H$ be a periodic element of $\mathcal{H}_{f_n}$ with period $p$. Define $H^k$ as the component of $f_n^{-pk}(H)$ containing $H$, and set $E_{f_n,H} = \overline{\bigcup_{k\geq 0} H^k}$. Since $H$ contains a critical point and all points in $H$ have period at most $N$, the preceding claim implies that each $H^k$, and hence $E_{f_n,H}$, is separated from the critical points of $f_n$ outside $H$ by pre-repelling points.
		Consequently, $$E_{f_n,H}\cap E_{f_n,H'}=\emptyset$$ for any other periodic component \(H'\) of \(\mathcal{H}_{f_n}\). Hence \(E_{f_n,H}\) is a maximal Fatou chain, and \(\widetilde{\varsigma}_n\) is injective on the periodic elements of \(\mathcal{H}_{f_n}\).
		
		A further similar argument shows that $\widetilde{\varsigma}_n$ is bijective. Then the map $\iota_n := \widetilde{\varsigma}_n \circ \varsigma_n \circ \varsigma^{-1}$ satisfies the requirements of Statement (4).

		\vspace{5pt}
		
		{\bf 4. Quasiconformal deformation}\vspace{5pt}
		
		For the proof of Statement (5), we now assume that $c^*$ lies on the boundary of a bounded Fatou domain $U$ of $f$ and begin by establishing the following claim.\vspace{5pt}
		
		{\bf Claim 6}. \emph{For each sufficiently large  $n$, there exist homeomorphisms $\xi_0,\xi_1:\C\to\C$ and a neighborhood $W_0$ of the union of all attracting or parabolic cycles  of $F_n$ (including the infinity), such that
			$\xi_0\circ F_n=F_{n+1}\circ \xi_1$ on $\C$, $\xi_0$ is quasiconformal on $\C$ and is identical near $\infty$,  and $\xi_0,\xi_1$ are isotopic relative to $P_{n}\cup \overline{W_0}$.}
		
		\begin{proof}[Proof of claim 6]
			Since $c^*\in\partial U$, we can define $F_n$ and $F_{n+1}$ in Step 1 such that $F_n(c^*)$ and $F_{n+1}(c^*)$ both lie in $D_n(v^*)\cap f( U)$, and that there exists a disk $B_0\Subset D_n(v^*)\cap f( U)$ with smooth boundary satisfying the following properties:
			\begin{itemize}
				\item $B_0$ contains both $F_n(c^*)$ and $F_{n+1}(c^*)$;
				\item for each $k\geq 1$, $f^k:B_0\to B_k:=f^k(B_0)$ is a homeomorphism and $\overline{B_k}\cap P_f=\emptyset$;
				\item the closed disks $\overline{B_k},k\geq1,$ are pairwise disjoint.
			\end{itemize}

			Let $h_0:\overline{B_0}\to \overline{B_0}$ be a quasiconformal map such that $h_0=id$ on $\partial B_0$ and $h_0(F_n(c^*))=F_{n+1}(c^*)$. Then, for each $k\geq1$, define a quasiconformal map $h_k:\overline{B_k}
			\to \overline{B_k}$ by $$h_k=f^k\circ h_0\circ (f^k|_{B_0})^{-1}.$$ Since $f^k$ coincides with both $F_n^k$ and $F_{n+1}^k$ on $\overline{B_0}$, it follows that
			$h_k=id$ on $\partial B_k$ and $h_k(F_n^{k+1}(c^*))=F_{n+1}^{k+1}(c^*)$. Set $\mathcal{B}=\bigcup_{k\geq0}B_k$.

			We define a map $\xi_0:\C\to \C$ by
			\[\xi_0(z)=\left\{
			\begin{array}{ll}
				z, & \hbox{if $z\in\C\setminus \mathcal{B}$;} \\[2pt]
				h_k(z), & \hbox{if $z\in B_k$ for some $k\geq 0$.}
			\end{array}
			\right.\]
			It is straightforward to verify that $\xi_0$ is quasiconformal on $\C$, and $\xi_0(P_{n})=P_{n+1}$. Moreover, since $F_n$ coincides with  $F_{n+1}$  outside of $D_n(c^*)$ and $D_n(a_j)$ with $j=1,\ldots,m$, we conclude that
			\begin{equation}\label{eq:33}
				\xi_0\circ F_n(z)=F_{n+1}\circ \xi_0(z)\ \textup{if } z\not\in \left(f^{-1}(\mathcal{B})\setminus\mathcal{B}\right)\cup D_n(c^*)\cup\bigcup_{j=1}^m D_n(a_j).
			\end{equation}
			
			To construct $\xi_1:\C\to\C$, we modify $\xi_0$ on three types of disks: $D_n(c^*)$, $D_n(a_j)$, and each component $B$ of $f^{-1}(\mathcal{B})\setminus \mathcal{B}$ not contained in $D_n(c^*)$.\vspace{1pt}
			
			First, note that $F_n=F_{n+1}=f$ on such a component $B$
			and $f:B\to B_k$ is conformal for some $k$. Then the quasiconformal map $h_B: \overline{B}\to \overline{B}$ is defined by $h_B=(f|_B)^{-1}\circ h_k\circ f|_B$. It follows that $h_B=id$ on $\partial B$.
			
			Next, the maps $F_n,F_{n+1}:D_n(a_j)\to D_n(c^*)$ are both homeomorphisms and send $a_j$ to $c^*$. Define the homeomorphism $h_{a_j}:D_n(a_j)\to D_n(a_j)$ by $h_{a_j}=(F_{n+1}|_{D_n(a_j)})^{-1}\circ F_n$ for $j=1,\ldots, m$.
			
			Finally, the disk $D_n(c^*)$ contains the unique critical point $c^*$ of both $F_n$ and $F_{n+1}$, and the punctured disk $D^*_n(c^*)$ is disjoint from $P_n$ and $P_{n+1}$. Thus, the maps
			\[\text{ $F_n:D^*_n(c^*)\to D^*_n(v^*)$  and $F_{n+1}:D^*_n(c^*)\to D^*_n(v^*)$}\]
			are both coverings of the same degree.
			
			Note that $F_n=F_{n+1}=f$ on $\partial D_n(c^*)$ and $\xi_0=id$ on $\partial D_n(v^*)$. By the Covering Lifting Lemma, there exists a unique homeomorphism $\tilde{h}:\overline{D_n(c^*)}\to \overline{D_n(c^*)}$ such that $\xi_0\circ F_n=F_{n+1}\circ \tilde{h}$ on $\overline{D_n(c^*)}$ and $\tilde{h}=id$ on $\partial D_n(c^*)$.
			
			We now define the map $\xi_1:\C\to\C$ piecewise as follows:
			\[\xi_1(z)=\left\{
			\begin{array}{ll}
				\xi_0(z), & \hbox{if $z\not\in\left( f^{-1}(\mathcal{B})\setminus\mathcal{B}\right)\cup D_n(c^*)\cup(\bigcup_{i=1}^m D_n(a_j))$;} \\[2pt]
				h_B(z), & \hbox{if $z\in B$, a component of $f^{-1}(\mathcal{B})\setminus\mathcal{B}$ not contained in $D_n(c^*)$;}\\[2pt]
				\tilde{h}(z), &\hbox{if $z\in D_n(c^*)$;}\\[2pt]
				h_{a_j}(z), &\hbox{if $z\in D_n(a_j)$ for some $j\in\{1,\ldots,m\}$.}
				
			\end{array}
			\right.\]
			The constructions of $h_B$, $\tilde{h}$ and $h_{a_j}$ ensure that $\xi_0$ and $\xi_1$ satisfy all requirements in Claim 6.
		\end{proof}
		
		Since $F_n$ and $F_{n+1}$ are c-equivalent to $f_n$ and $f_{n+1}$, respectively, Claim 6 implies that there are homeomorphisms $\t{\eta}_0,\t{\eta}_1:\C\to\C$ and a neighborhood $U_0$ of the union of all attracting or parabolic cycles of $f_n$ (including the infinity), such that  $$\t{\eta}_0\circ f_n=f_{n+1}\circ \t{\eta}_1\textup{ on }\C,$$ $\t{\eta}_0$ is quasiconformal on a neighborhood of $\overline{U_0}$, $\t{\eta}_0$ is conformal near $\infty$ with $\t{\eta}_0(z)/z\to 1$ as $z\to\infty$ and $\t{\eta}_0,\t{\eta}_1$ are isotopic relative to $P_{f_n}\cup \overline{U_0}$. We can modify $\t{\eta}_0$ to a global quasiconformal map  without changing it on $\overline{U}_0$ such that the modified map $\eta_0$ is isotopic to $\t{\eta}_0$ rel $P_{f_n}\cup \overline{U_0}$. Denote the maximal dilatation of $\eta_0$ by $K$. Then the lift $\eta_1$ of $\eta_0$ by $f_n$ and $f_{n+1}$ is $K$-quasiconformal.
		 
		By the Homotopic Lifting Lemma, we obtain a sequence of $K$-quasiconformal map $\{\eta_k\}_{k\geq0}$ such that $\eta_k\circ f_n=f_{n+1}\circ \eta_{k+1}$ and $\eta_k$ is isotopic to $\eta_{k+1}$ rel $f_n^{-k}(U_0\cup P_{f_n})$. Then $\{\eta_k\}_{k\geq0}$ forms a normal family. By taking a subsequence, we can assume that $\eta_k$ locally uniformly converges to a quasiconformal map $\eta: \mathbb{C} \to \mathbb{C}$. Then $\eta \circ f_n = f_{n+1} \circ \eta$ on $\C$ and $\eta:U_{f_n}(\infty)\to U_{f_{n+1}}(\infty)$ is conformal with $\eta(z)/z\to 1$ as $z\to\infty$. This completes the proof of the former part of Statement (5).
	\vspace{5pt}

{\bf 5. Semiconjugacy}\vspace{5pt}

We continue to prove the latter part of Statement (5). After Step 4, we obtain a map $\t{f}\in\CCCC_d$ and a sequence $\{\eta_n\}$ of quasiconformal map on $\C$ such that for every $n\geq1$, $\eta_n\circ \t{f}=f_n\circ \eta_n$, and 
$\eta_n:U_{\t{f}}(\infty)\to U_{f_{n}}(\infty)$ is conformal with $\eta_n(z)/z\to 1$ as $z\to\infty$.
 As a result,
 \begin{equation}\label{eq:3333}
 \text{ $\eta_n\big(R_{\t{f}}(\theta)\big)=R_{f_{n}}(\theta)$, for any $\theta\in\R/\Z$ and $n\geq1$.}
 \end{equation}

For any polynomial $g$, denote ${\rm Prep}(g)$ the set of preperiodic points of $g$ in the Julia set.\vspace{1pt}

By taking a subsequence, we may assume that $\{\eta_n(\tilde{z})\}$ converges for every $\tilde{z}\in \operatorname{Prep}(\tilde{f})$. This defines a map $\wp:\operatorname{Prep}(\tilde{f})\to\operatorname{Prep}(f)$ by $\wp(\tilde{z})=\lim_{n\to\infty}\eta_n(\tilde{z})$, which satisfies 
\begin{equation}\label{eq:4444}
f\circ\wp(\t{z})=\wp\circ\tilde{f}(\t{z}),\ \forall\, \t{z}\in \operatorname{Prep}(\tilde{f}).
\end{equation}

{\bf Claim 7}. \emph{For any  point $z\in {\rm Prep}(f)$, we have that $\t{z}=\wp^{-1}(z)$ is a singleton unless $f^k(z)=c^*$ for some $k\geq0$, in which case $\wp^{-1}(z)$ consists of $\deg(f^{k+1}, z)$ distinct points.}
\begin{proof}[Proof of Claim 7]
By Statement (3) and Lemma~\ref{lem:stable}, $\wp$ induces a bijection between the repelling (resp.\ parabolic) periodic points of \(\tilde f\) and those of \(f\). If $z\in {\rm Prep}(f)$ is never iterated to $c^*$, then by lifting and Statements (1)--(2), we obtain $\wp^{-1}(z)$ is a singleton.

Now assume \(f^k(z)=c^*\) for some \(k\ge 0\).  
If \(k=0\), then \(z=c^*\) and \(\tilde v^*=\wp^{-1}(v^*)\) is a singleton, where \(v^*=f(c^*)\).  
Since \(c_n^*\in F_{f_n}\) by Statement (1), the point \(z\) is perturbed into \({\rm deg}(f,z)\) distinct preperiodic points of \(f_n\), all mapped to \(\tilde v^*\) by \eqref{eq:4444}.  
By Statement (2) and induction, it follows that \(\wp^{-1}(z)\) consists of \(\deg(f^{k+1}, z)\) distinct points.
\end{proof}

{\bf Claim 8}. \emph{For any point \(\tilde z\in \mathrm{Prep}(\tilde f)\), if \(\pi_{\tilde f}(\theta)=\tilde z\), then \(\pi_f(\theta)=\wp(\tilde z)\). Conversely, if \(z\in\mathrm{Prep}(f)\) satisfies \(\wp^{-1}(z)=\{\tilde z_1,\dots,\tilde z_m\}\), then  
\[
\pi_f^{-1}(z)=\bigsqcup_{j=1}^{m}\pi_{\tilde f}^{-1}(\tilde z_j).
\]}

\begin{proof}[Proof of Claim 8]
From the proof of Claim 7, we have \(\wp\) maps pre‑repelling (resp.\ pre‑parabolic) points of \(\tilde f\) to pre‑repelling (resp.\ pre‑parabolic) points of \(f\).\vspace{3pt}

\noindent\emph{First Conclusion.}  
If \(\tilde z\) is pre‑repelling, the result follows from Lemma~\ref{lem:stable}\,(1) and fact~\eqref{eq:3333}.  
If \(\tilde z\) is pre‑parabolic and periodic, Statement (3) implies that \(\tilde z\) and \(\wp(\tilde z)\) have the same period, multiplier and multiplicity; the claim then follows from Lemma~\ref{lem:stable}\,(2) and \eqref{eq:3333}. The strictly preperiodic case can be obtained by lifting.\vspace{3pt}

\noindent\emph{Second Conclusion.}  
Assume first that \(z\) is periodic. By Claim 7, \(\wp^{-1}(z)=\{\tilde z\}\) is a singleton. The first part gives \(\pi_{\tilde f}^{-1}(\tilde z)\subseteq\pi_f^{-1}(z)\), while Lemma~\ref{lem:stable} together with \eqref{eq:3333} yields the reverse inclusion. Hence the equality holds.

Now suppose \(z\) is strictly preperiodic.  
If \(z\) is never iterated to \(c^*\), then \(\wp^{-1}(z)=\{\tilde z\}\) by Claim 7. Choose \(j\ge1\) such that \(w=f^j(z)\) is periodic, and let \(\tilde w=\tilde f^j(\tilde z)=\wp^{-1}(w)\). The first part gives \(\pi_{\tilde f}^{-1}(\tilde z)\subseteq\pi_f^{-1}(z)\). Since \(\deg(f^j,z)=\deg(\tilde f^j,\tilde z)\) by Statements (1)--(2) and \(\pi_f^{-1}(w)=\pi_{\tilde f}^{-1}(\tilde w)\), the two sets \(\pi_{\tilde f}^{-1}(\tilde z)\) and \(\pi_f^{-1}(z)\) have the same cardinality, therefore  coincide.

If \(f^k(z)=c^*\) for some \(k\ge0\), set \(v^*=f^{k+1}(z)=f(c^*)\). The point \(v^*\) is not iterated to \(c^*\), so by the previous case \(\pi_f^{-1}(v^*)=\pi_{\tilde f}^{-1}(\tilde v^*)\) where \(\tilde v^*=\wp^{-1}(v^*)\). Write \(\wp^{-1}(z)=\{\tilde z_1,\dots,\tilde z_m\}\) with \(m=\deg(f^{k+1},z)\). The first part implies  
\[
\bigsqcup_{j=1}^{m}\pi_{\tilde f}^{-1}(\tilde z_j)\subseteq\pi_f^{-1}(z).
\]  
For each \(j\), \(\#\pi_{\tilde f}^{-1}(\tilde z_j)=\#\pi_{\tilde f}^{-1}(\tilde v^*)=\#\pi_f^{-1}(v^*)\). Hence the two sets have the same cardinality $m\cdot\#\pi_f^{-1}(v^*)$ and therefore coincide. This completes the proof of Claim 8.
\end{proof}

{\bf Claim 9}. \emph{For any $\theta,\eta\in\R\setminus \Q$, $\pi_{f}(\theta)=\pi_f(\eta)$ if and only if $\pi_{\t{f}}(\theta)=\pi_{\t{f}}(\eta)$.}
\begin{proof}[Proof of Claim 9]
By Proposition~\ref{pro:iterate}, every non‑preperiodic point of \(f\) or \(\tilde f\) receives at most two external rays.
We first show that \(\pi_{f}(\theta)=\pi_{ f}(\eta)\) implies \(\pi_{\t{f}}(\theta)=\pi_{\t{f}}(\eta)\).

Assume \(\pi_{f}(\theta)=\pi_{f}(\eta)= z\) is non‑preperiodic. Then \(z\) cannot lie in any maximal Fatou chain of $f$: otherwise it would eventually be mapped  to \(\HHH_{f}\) defined in \eqref{eq:5555}, and hence be preperiodic by Claim 3, a contradiction. Thus, by Lemma~\ref{lem:non-cluster} and Theorem~\ref{thm:CT}, there exists a sequence \(\{x_k\}\) of preperiodic points of \(f\) converging to \(z\), such that each \(x_k\) is pre-repelling, non-precritical, and receives two external rays with arguments \(\theta_k\) and \(\eta_k\). Since \(\pi_{f}^{-1}(z)=\{\theta,\eta\}\), we may assume \(\theta_k\to\theta\) and \(\eta_k\to\eta\) as \(k\to\infty\).  Claim 8 then implies \(\pi_{\t{f}}(\theta_k)=\pi_{\t{f}}(\eta_k)\) for every \(k\). The local connectivity of \(J_{\t{f}}\) now yields \(\pi_{\t{f}}(\theta)=\pi_{\t{f}}(\eta)\).

The converse direction can be proved analogously, and we omit the details.
\end{proof}

Now, we define a map $\phi: J_{\t{f}} \to J_f$ by $\phi(\t{z}) = \pi_f(\theta)$, where $\t{z} = \pi_{\t{f}}(\theta) \in J_{\t{f}}$. 
Since $J_{\t{f}}$ and $J_f$ are locally connected, Claims 8 and 9 imply that $\phi$ is well-defined and continuous. 
Note that $\phi$ coincides with $\wp$ on $\operatorname{Prep}(\t{f})$, a dense subset of $J_{\t{f}}$. 
By the continuity of $\phi$ and equation \eqref{eq:4444}, we obtain that $\phi$ is surjective and $f \circ \phi = \phi \circ \t{f}$ on $J_{\t{f}}$. 
The fiber properties of $\phi$ follow from Claims~7--9. 
This completes the proof of Theorem~\ref{thm:perturbation}.

	\section{Lamination and geodesic lamination}
	
	Let $f$ be a polynomial in $\CCCC_d$. Recall that
	the rational lamination $\lambda_\mathbb{Q}(f)$ of $f$ consists of the pairs $(\theta_1,\theta_2)\in(\Q/\mathbb{Z})^2$ for which the external rays $R_f(\theta_1)$ and $R_f(\theta_2)$ land at the same point.  It is clear that $\lambda_\Q(f)$ is an equivalence relation on $\Q/\Z$.
	
	\begin{lemma}[\cite{K},\,Lemma 3.9]\label{lem:closed}
		The equivalence relation $\lambda_\Q(f)$ is a closed set in $(\Q/\mathbb{Z})^2$.
	\end{lemma}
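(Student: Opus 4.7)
The plan is to show that if $(\alpha_n,\beta_n)\in\lambda_\Q(f)$ converges to $(\alpha,\beta)\in(\Q/\Z)^2$, then $\pi_f(\alpha)=\pi_f(\beta)$. First I would dispose of the trivial case where $(\alpha_n,\beta_n)$ is eventually equal to $(\alpha,\beta)$. In the remaining case, since $\pi_f^{-1}(z_0)$ is finite for every preperiodic $z_0\in J_f$, I may pass to a subsequence and assume that $\alpha_n\neq\alpha$ and $\beta_n\neq\beta$ for every $n$. Set $z_n:=\pi_f(\alpha_n)=\pi_f(\beta_n)\in J_f$. By compactness of $J_f$ I pass to a further subsequence so that $z_n\to z^*$ in $J_f$. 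It then suffices to establish $z^*=\pi_f(\alpha)$; applying the same argument to $(\beta_n)$ gives $z^*=\pi_f(\beta)$, and hence $\pi_f(\alpha)=\pi_f(\beta)$.

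The next step is to produce an approximating sequence $w_n\in U_f(\infty)$ that simultaneously detects the dynamical limit $z^*$ in $\C$ and the angular limit $e^{2\pi i\alpha}$ on $\partial\D$. Concretely, for each $n$ I choose $r_n>1$ with $r_n\to 1$ such that $w_n:=\chi_f^{-1}(r_n e^{2\pi i\alpha_n})\in R_f(\alpha_n)$ satisfies $|w_n-z_n|<1/n$. Then $w_n\to z^*$ in $\C$, while $r_n e^{2\pi i\alpha_n}\to e^{2\pi i\alpha}$ from the exterior of $\overline{\D}$, so $z^*$ lies in the cluster set of $\chi_f^{-1}$ at $e^{2\pi i\alpha}$.

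The main obstacle is to show that this cluster set reduces to the single point $\pi_f(\alpha)$ when $\alpha$ is rational, equivalently that $w_n\to\pi_f(\alpha)$. For this I would replace $f$ by a suitable iterate and $\alpha$ by its corresponding forward image under $d$-multiplication to reduce to the case where $\pi_f(\alpha)=y_0$ is a fixed point of $f$, which by the classification of rational landing points is either repelling or parabolic. In the repelling case, I would use a linearization $\phi$ of $f$ at $y_0$: on a small external sector of $\C\setminus K_f$ adjacent to $y_0$, the Böttcher coordinate $\chi_f^{-1}$ is described via $\phi$, so any sequence in the corresponding sector of $\C\setminus\overline{\D}$ tending radially to $e^{2\pi i\alpha}$ is forced to tend to $y_0$. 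In the parabolic case, I would invoke the Leau-Fatou Flower Theorem~\ref{thm:flower}: the ray $R_f(\alpha)$ eventually enters a repelling petal $\PPP^+$ at $y_0$, and the Fatou coordinate on $\PPP^+$, combined with its compatibility with $\chi_f$, pins down the local geometry enough to conclude $w_n\to y_0$. The symmetric argument yields $z^*=\pi_f(\beta)$, completing the proof. The principal technical difficulty is the parabolic case, where one must carefully match the Fatou coordinate on $\PPP^+$ with the asymptotic behavior of $\chi_f^{-1}$ in $\C\setminus\overline{\D}$ near $e^{2\pi i\alpha}$ to rule out the possibility that nearby rational rays ``escape'' to a distant landing point.
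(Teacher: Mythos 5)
The paper offers no proof of this statement; it is quoted verbatim from Kiwi, so your attempt has to be judged on its own. It contains a genuine gap at precisely the step you call the main obstacle. What you need is that $w_n=\chi_f^{-1}(r_ne^{2\pi i\alpha_n})\to\pi_f(\alpha)$ for \emph{every} choice of $r_n\to1$ and rational $\alpha_n\to\alpha$; this is exactly the assertion that the prime-end impression of the angle $\alpha$ is the singleton $\{\pi_f(\alpha)\}$. That assertion is strictly stronger than the landing of $R_f(\alpha)$, and strictly stronger than the lemma itself. Your local arguments do not deliver it: the linearization (resp. repelling-petal) argument controls the ray $R_f(\alpha)$ only because that ray is invariant under an iterate of $f$, so its tail can be pulled back by the contracting inverse branch at $y_0$. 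The nearby rays $R_f(\alpha_n)$ are not invariant, and what the pullback actually shows is that $\chi_f^{-1}(se^{2\pi it})\to y_0$ when $|t-\alpha|$ shrinks geometrically (like $d^{-k}$) as the potential $s$ descends through the levels $r_0^{1/d^k}$ --- a strongly non-tangential approach to $e^{2\pi i\alpha}$. Your sequence satisfies no such constraint: to force $|w_n-z_n|<1/n$ you must push $r_n$ extremely close to $1$ for each \emph{fixed} $\alpha_n$, so the approach is as tangential as possible, exactly the regime the local coordinates do not see. Moreover the statement you are reducing to cannot be true in the generality needed: by Carath\'eodory, if $J_f$ is not locally connected (the only case in which the lemma is not immediate, since otherwise $\pi_f$ extends continuously to $\R/\Z$), some impression is non-degenerate, and there is no a priori reason, and no argument in your sketch, exempting the rational angles.

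The hypothesis you discard after constructing $z^*$ is the one that must carry the proof: each $z_n$ is the common landing point of $R_f(\alpha_n)$ and $R_f(\beta_n)$, so $\overline{R_f(\alpha_n)}\cup\overline{R_f(\beta_n)}$ is a separating arc whose two ends converge to the \emph{distinct} angles $\alpha$ and $\beta$. The standard route (and Kiwi's) plays these separating arcs against the finitely many rational rays landing at $\pi_f(\alpha)$ and at $\pi_f(\beta)$: two ray pairs with linked angle sets must share their landing point, and a one-sided stability statement for rays landing at repelling and parabolic (pre)periodic points handles the directions in which no linking occurs. Without an input of this kind, replacing the co-landing condition by membership of $z^*$ in the cluster sets of $\chi_f^{-1}$ at both $e^{2\pi i\alpha}$ and $e^{2\pi i\beta}$ loses the essential information, and the proof as proposed cannot be completed.
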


	Suppose further that  $J_f$ is locally connected.
	The \emph{(real) lamination} $\lambda(f)\subseteq(\mathbb R / \mathbb Z)^2$ of $f$  consists of   $(\theta_1,\theta_2)\in (\mathbb R / \mathbb Z)^2$ for which the external rays $R_f(\theta_1)$ and $R_f(\theta_2)$ land at the same point. Then $\lambda_\Q(f)=\lambda(f)\cap (\Q/\mathbb{Z})^2$. On the other hand, we have

	
	\begin{lemma} [\cite{K},\,Lemma 4.17]
		\label{lem:rational}
		If each critical point of $f$ in the Julia set is preperiodic, then $\lambda(f)$ is
		the smallest closed equivalence relation on $\mathbb R / \mathbb Z$
		that contains $\lambda_\Q(f)$.
	\end{lemma}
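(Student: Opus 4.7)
Let $\lambda^{*}$ denote the smallest closed equivalence relation on $\R/\Z$ that contains $\lambda_{\Q}(f)$; the task splits into showing both $\lambda^{*}\subseteq\lambda(f)$ and $\lambda(f)\subseteq\lambda^{*}$. The first inclusion is essentially automatic: the hypothesis that every critical point in $J_f$ is preperiodic makes $f$ subhyperbolic, so $J_f$ is locally connected and $\pi_f:\R/\Z\to J_f$ extends continuously. Consequently $\lambda(f)$, being the kernel pair of a continuous map into a Hausdorff space, is a closed equivalence relation; since it trivially contains $\lambda_{\Q}(f)$, this yields $\lambda^{*}\subseteq\lambda(f)$.

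For the reverse inclusion, I take a pair $(\theta,\eta)\in\lambda(f)$ with $\theta\neq\eta$ and common landing point $z=\pi_f(\theta)=\pi_f(\eta)$. When $z$ is preperiodic, the classical fact that external rays landing at a preperiodic point carry rational arguments forces $\theta,\eta\in\Q/\Z$, so $(\theta,\eta)\in\lambda_{\Q}(f)\subseteq\lambda^{*}$ and we are done. The substantive case is when $z$ is not preperiodic; by the hypothesis this also implies $z$ is neither in the forward orbit of a critical point nor at a parabolic cycle. My plan is to exhibit a sequence $(\theta_n,\eta_n)\in\lambda_{\Q}(f)$ with $(\theta_n,\eta_n)\to(\theta,\eta)$ and then invoke closedness of $\lambda^{*}$.

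To build this approximating sequence, I would exploit the pair of rays $R_f(\theta),R_f(\eta)$: together with $z$ they form a simple arc splitting $\C$ into two open regions $\Omega_{+}$ and $\Omega_{-}$, each meeting $J_f$. I then transfer the problem to the postcritically finite model $g$ provided by Theorem~\ref{thm:CT}; the homeomorphism $\phi$ is identity-normalized at infinity and conjugates external rays in a topologically faithful way. On the model side, Lemma~\ref{lem:non-cluster} (applied to a regulated arc in $K_g$ through $\phi(z)$ which does not lie in a maximal Fatou chain) produces an abundance of pre-repelling points of $g$ accumulating on $\phi(z)$, each of them landing point of at least two rational external rays of $g$ whose arguments can be made to approach the lifts of $\theta,\eta$. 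Pulling back through $\phi$ and using the angle-preserving property gives preperiodic cut points $z_n\to z$ of $K_f$, each landed by two rational rays $R_f(\theta_n)\subseteq\overline{\Omega_{+}}$ and $R_f(\eta_n)\subseteq\overline{\Omega_{-}}$, with $(\theta_n,\eta_n)\in\lambda_{\Q}(f)$.

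The main technical obstacle is verifying the angular convergence $\theta_n\to\theta$, $\eta_n\to\eta$, not merely the spatial convergence $z_n\to z$. This relies on local connectivity of $J_f$ together with the cyclic order on external rays landing near $z$: because the rays $R_f(\theta_n),R_f(\eta_n)$ are trapped on opposite sides of the separating arc, the uniform modulus of continuity of $\pi_f$ forces their arguments into the shrinking components of $(\R/\Z)\setminus\{\theta,\eta\}$ on either side. Granting this, closedness of $\lambda^{*}$ promotes $(\theta_n,\eta_n)\in\lambda_{\Q}(f)$ through the limit, giving $(\theta,\eta)\in\lambda^{*}$ and completing the proof.
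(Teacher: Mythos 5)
The paper gives no proof of this statement: it is quoted directly from Kiwi \cite{K}, so there is no internal argument to measure yours against. Your overall strategy is nevertheless the standard one, and it is essentially re-run inside this paper in Claim~9 of the proof of Theorem~\ref{thm:perturbation} and in the latter part of Lemma~\ref{lem:lamination-version}: closedness of $\lambda(f)$ (kernel of the continuous $\pi_f$, available because $J_f$ is locally connected --- note the hypothesis does \emph{not} make $f$ subhyperbolic, since parabolic cycles and infinite critical orbits in the Fatou set are allowed) gives $\lambda^*\subseteq\lambda(f)$; preperiodic landing points are handled by rationality of their rays; and the remaining leaves are approximated by rational ones. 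Two structural caveats: your reduction through Theorem~\ref{thm:CT} and Lemma~\ref{lem:non-cluster} silently restricts to geometrically finite $f$, which is strictly stronger than the stated hypothesis (though it covers every use of the lemma in this paper), and the existence of a regulated arc through $z$ avoiding maximal Fatou chains requires the case split of whether $z$ is an endpoint of such a chain (an interior cut point of one is already preperiodic by Proposition~\ref{pro:cluster}).

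The genuine gap is the convergence mechanism in the main case. You claim preperiodic cut points $z_n\to z$ landed by rays $R_f(\theta_n)\subseteq\overline{\Omega_+}$ and $R_f(\eta_n)\subseteq\overline{\Omega_-}$, and deduce $\theta_n\to\theta$, $\eta_n\to\eta$ because the rays are ``trapped on opposite sides of the separating arc.'' That configuration is impossible: for $z_n\neq z$ the arc $R_f(\theta_n)\cup\{z_n\}\cup R_f(\eta_n)$ is disjoint from $R_f(\theta)\cup\{z\}\cup R_f(\eta)$ and therefore lies entirely in one of $\Omega_+$, $\Omega_-$; equivalently, the leaves $\overline{\theta_n\eta_n}$ and $\overline{\theta\eta}$ cannot cross. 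Hence $\theta_n$ and $\eta_n$ lie in the \emph{same} complementary arc of $\{\theta,\eta\}$, and continuity of $\pi_f$ only forces their accumulation points into $\pi_f^{-1}(z)$; nothing in your argument prevents the pairs from degenerating to $(\theta,\theta)$ or $(\eta,\eta)$, which carries no information. Closing the argument requires two further inputs you do not supply: first, that $\pi_f^{-1}(z)=\{\theta,\eta\}$ exactly (a non-preperiodic point receives at most two rays, since a point with three or more is a branched point and is forced to be preperiodic via Proposition~\ref{pro:iterate}); second, that the approximating leaves $\overline{\theta_n\eta_n}$ can be chosen inside a strip, separating $\overline{\theta\eta}$ from a fixed reference leaf on one side, so that together with the first input their endpoints must converge to $\theta$ and $\eta$ respectively and not collapse. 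This is exactly what the strip construction in Lemma~\ref{lem:lamination-version} is engineered to provide, and without it your proof does not close.
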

	
	The following result is well-known.
	
	\begin{proposition}\label{pro:same1}
		Every subhyperbolic map in $\CCCC_d$ is contained in a relative hyperbolic component, which contains a unique postcritically finite map. Moreover, two subhyperbolic maps in $\CCCC_d$ lie in the same relative hyperbolic component if and only if they have the same rational lamination.
	\end{proposition}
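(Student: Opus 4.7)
The plan is to reduce the problem to the postcritically finite case via Theorem~\ref{thm:CT} and then use quasiconformal deformation to connect any subhyperbolic map to its pcf model. First, existence of $\HHHH_f$ is immediate: the identity is a trivial quasiconformal conjugacy of $f$ with itself, placing $f$ in the set \eqref{eq:111}, and $\HHHH_f$ is defined as its connected component.

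Next, to produce a pcf map in $\HHHH_f$, I would apply Theorem~\ref{thm:CT} to obtain a unique pcf polynomial $g_0 \in \PPPP_d$ and a homeomorphism $\phi$ with $\phi(z)/z \to 1$ at infinity, conjugating $f$ to $g_0$ on the Julia sets. To verify $g_0 \in \HHHH_f$, I would construct a continuous path from $f$ to $g_0$ in $\PPPP_d$ whose members are pairwise quasiconformally conjugate near their Julia sets. Since $f$ is subhyperbolic, its bounded Fatou components are attracting basins; starting from $g_0$ (whose critical points in the Fatou set are super-attracting periodic), the qc-deformation space obtained by varying Beltrami coefficients supported in these basins is a connected complex submanifold of $\PPPP_d$ containing both $f$ and $g_0$. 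Uniqueness of the pcf map in $\HHHH_f$ then follows from Thurston-type rigidity: two monic, centered pcf polynomials that are quasiconformally conjugate near their Julia sets must coincide.

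For the stated equivalence, the forward direction is direct: if $f, g$ lie in the same relative hyperbolic component, they are quasiconformally conjugate on a neighborhood of the Julia set. Since both are monic and centered, the B\"ottcher coordinates are canonically normalized, and the conjugacy preserves the landing pattern of rational external rays at preperiodic points, whose combinatorial address is invariant under topological conjugacy of the Julia dynamics. Hence $\lambda_\Q(f) = \lambda_\Q(g)$. For the converse, assume $\lambda_\Q(f) = \lambda_\Q(g)$. Applying Theorem~\ref{thm:CT} produces pcf models $g_f, g_g$; the canonical B\"ottcher normalization forces $\lambda_\Q(g_f) = \lambda_\Q(f) = \lambda_\Q(g) = \lambda_\Q(g_g)$. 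By Kiwi-type rigidity for pcf polynomials, namely that monic, centered pcf maps with identical rational lamination coincide, we obtain $g_f = g_g =: g_0$. Combined with the existence argument above, both $f$ and $g$ then belong to $\HHHH_{g_0}$.

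The main technical obstacle will be the construction of the qc-deformation path from $f$ to its pcf model $g_0$ within $\PPPP_d$, while staying in the equivalence class defining \eqref{eq:111}. This requires carefully parametrizing Beltrami coefficients on the Fatou set, verifying that the integrated family of polynomials remains monic and centered after normalization, and checking that the quasiconformal conjugacy near the Julia set persists throughout the deformation.
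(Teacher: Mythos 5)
Your overall architecture --- reduce to the postcritically finite model, then invoke rigidity of pcf polynomials sharing a rational lamination --- matches the paper's, which likewise passes through a pcf center and cites Poirier's rigidity theorem \cite{Poi1}. But there is a genuine gap at the key step, the construction of a path from $f$ to its pcf model $g_0$ inside the equivalence class \eqref{eq:111}. You propose to realize this path as ``the qc-deformation space obtained by varying Beltrami coefficients supported in these basins,'' claiming it is connected and contains both $f$ and $g_0$. It cannot contain both: every polynomial obtained from $g_0$ by integrating an invariant Beltrami coefficient is \emph{globally} quasiconformally conjugate to $g_0$, and a global quasiconformal conjugacy carries critical points to critical points, hence superattracting cycles to superattracting cycles and finite critical orbits to finite critical orbits. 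A subhyperbolic $f$ that is not postcritically finite has either an attracting cycle with nonzero multiplier or a critical point with infinite forward orbit, so it lies outside the Beltrami deformation space of $g_0$ (compare the center of the main cardioid, which is not qc conjugate to any other parameter in it). What is needed is not a Beltrami deformation but a quasiconformal \emph{surgery} in the sense of Douady--Hubbard \cite{DH2}: one modifies $f$ deep inside each attracting basin, gluing in a model so as to render the cycle superattracting, producing a path $\{f_t\}$ from $f$ to a pcf map whose members are quasiconformally conjugate to $f$ only \emph{near the Julia set} --- which is exactly the relation defining \eqref{eq:111}. This is the route the paper takes. Note also that Theorem~\ref{thm:CT} by itself does not place $g_0$ in $\HHHH_f$, since it provides only a topological (not quasiconformal) conjugacy, and only on $J_f$ rather than on a neighborhood of it; so the surgery step cannot be bypassed.

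A secondary point: in the forward direction you assert that the conjugacy ``preserves the landing pattern of rational external rays'' because the ``combinatorial address is invariant under topological conjugacy of the Julia dynamics.'' As stated this is not enough --- a conjugacy of $J_f$ could a priori permute external angles. One needs that the conjugacies over the component assemble into a holomorphic motion of $\overline{U_f(\infty)}$ compatible with the B\"ottcher coordinates, i.e.\ sending $\overline{R_f(\theta)}$ to $\overline{R_g(\theta)}$ for each $\theta$; this is how the paper argues, and it is what actually yields $\lambda_\Q(f)=\lambda_\Q(g)$.
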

	\begin{proof}[Sketch of the proof]
		Let $f \in \CCCC_d$ be subhyperbolic. If $f$ has no bounded Fatou domains, then it is postcritically finite and then $\HHHH_f = \{f\}$. Otherwise, a standard quasiconformal surgery \cite{DH2} provides a path $\{f_t\}_{0 \leq t \leq 1} \subseteq \CCCC_d$ from $f = f_1$ to a postcritically finite map $f_0$, with each $f_t$ quasiconformally conjugate to $f$ near the Julia set. Hence $f \in \HHHH_{f_0}$.
		
		Let $\HHHH_f$ be the relative hyperbolic component containing $f$. There exists a holomorphic motion $\iota: \overline{U_f(\infty)} \times \HHHH_f \to \mathbb{C}$ such that for all $g \in \HHHH_f$,
		\[
		\iota(\overline{U_f(\infty)}, g) = \overline{U_g(\infty)}, \quad
		\iota(\overline{R_f(\theta)}, g) = \overline{R_g(\theta)} \quad (\theta \in \mathbb{R}/\mathbb{Z}).
		\]
		So all maps in $\HHHH_f$ share the same lamination.
		
		Now take another subhyperbolic $g \in \CCCC_d$ with $\lambda(g) = \lambda(f)$. By the argument above, there exist postcritically finite polynomials $f_0, g_0$ such that $f \in \HHHH_{f_0}$, $g \in \HHHH_{g_0}$, and $\lambda(f_0) = \lambda(g_0)$. By \cite[Theorem 1.1]{Poi1}, we have $f_0 = g_0$.\end{proof}
	
	There is a natural geometric interpretation for laminations, originally  due to Thurston \cite{Th}.
	The map $\sigma_d:\R/\Z\to \R/\Z$ is defined by $\sigma_d(\theta)=d\theta~{\rm mod}~\Z$. We identity $\R/\Z$ with the unit circle $\partial\D$ via the map $\theta\mapsto e^{2\pi i\theta}$.
	
	Let $f\in\CCCC_d$ be a geometrically finite polynomial. The \emph{geodesic lamination}  $L(f)$ of $f$ is defined as the union of $\partial\D$ and all geodesics that form the boundaries of the hyperbolic convex hulls (within $\overline{\mathbb{D}}$) of the  $\lambda(f)$-equivalence classes. Each such geodesic (an open arc) is called a \emph{leaf} of $L(f)$. A leaf with endpoints $a, b\in\partial\mathbb{D}$ is denoted by $\overline{ab}$. Since $\lambda(f)$ is closed, $L(f)$ is a closed subset of $\overline{\D}$.   \vspace{1pt}
	
	A \emph{gap} $\Omega$ of $L(f)$ is a connected component of $\D\setminus L(f)$. The  leaves of $L(f)$ contained in $\partial \Omega$ are called the \emph{edges} of $\Omega$, and the points in $\overline{\Omega}\cap\partial\D$ are its \emph{vertices}. We call $\Omega$ a \emph{finite} or \emph{infinite} gap according to wheter it has finitely or infinitely many edges. Note that if $\Omega$ is an infinite gap, then $\partial\Omega\cap\partial\D$ is a Cantor set; moreover, $\partial\Omega$ is always a Jordan curve.
	
	The projection $\pi_f:\partial \D\to J_f$ extends naturally to $L(f)$: for any leaf $\overline{ab}$ of $L(f)$, we define $\pi_f(\overline{ab})=\pi_f(a)$. Consequently, for any gap $\Omega$ of $L(f)$:
	\begin{enumerate}
		\item[$\bullet$] If $\Omega$ is finite, then $\pi_f(\partial\Omega)$ is a branched point of $J_f$.
		\item[$\bullet$] If $\Omega$ is infinite, then $\pi_f(\partial\Omega)=\partial U$ for some Fatou domain $U$ of $f$.
	\end{enumerate}
	
	The covering $\sigma_d:\partial \D\to \partial\D$ also extends to $\sigma_d:L(f)\to L(f)$. For any leaf $\ell=\overline{ab}$ of $L(f)$: if $\sigma_d(a)=\sigma_d(b)$, we define $\sigma_d(\ell)=\sigma_d(a)$; otherwise, we define $\sigma_d(\ell)=\overline{\sigma_d(a)\sigma_d(b)}$. The extension satisfies $\pi_f\circ \sigma_d=f\circ\pi_f$ on $L(f)$. Furthermore,  if $\sigma_d(\partial\Omega)=\partial \Omega'$ for two gaps $\Omega,\Omega'$ of $L(f)$, we define $\sigma_d(\Omega)=\Omega'$.
	
	Let $L_\Q(f)$ denote the union of $\partial\D$ and all leaves of $L(f)$ with rational angles. Then the following result follows directly from Proposition \ref{pro:iterate} and Lemma \ref{lem:rational}.
	
	\begin{corollary}\label{coro:same}
		Let $f,g\in\CCCC_d$ be  geometrically finite polynomials. Then  $L_\Q(f)$ contains the edges of all gaps of $L(f)$, and $\lambda(f)=\lambda(g)$ provided that $L_\Q(f)=L_\Q(g)$.
	\end{corollary}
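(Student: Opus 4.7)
The plan is to split the corollary into its two assertions. First I would show that every edge of every gap of $L(f)$ is a rational leaf, and then deduce the lamination equality from the coincidence of rational leaves together with Lemma~\ref{lem:rational}.

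For the first assertion, I would fix an edge $\overline{ab}$ of a gap $\Omega$ of $L(f)$. By construction the two rays $R_f(a)$ and $R_f(b)$ co-land at some point $z\in J_f$, and the key step is to prove $z$ is preperiodic under $f$. Using the dichotomy described in the excerpt, I would first argue that $z$ is a cut point of $J_f$: in the finite-gap case, $z=\pi_f(\partial\Omega)$ is a branched point of $J_f$ with at least three external rays landing, which forces it to be a cut point; in the infinite-gap case, $z$ is a pinch point of the corresponding Fatou boundary $\partial U$, and the local connectivity of $J_f$ guarantees that the two rays at $z$ bound complementary sectors each meeting $J_f$ non-trivially, so again $z$ is a cut point. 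I would then apply Theorem~\ref{thm:CT} to produce the postcritically finite model $g$ together with the homeomorphism $\phi$ conjugating $f|_{J_f}$ to $g|_{J_g}$; this sends cut points bijectively to cut points. Proposition~\ref{pro:iterate} forces every cut point of $J_g$ to be eventually iterated into the finite tree $T_g$, so every cut point of $J_g$, and hence every cut point of $J_f$, is preperiodic. Thus $z$ is preperiodic under $f$, which makes $a$ and $b$ preperiodic under $\sigma_d$, and therefore rational; so $\overline{ab}\in L_\Q(f)$.

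For the second assertion I would observe that $L_\Q(f)=L_\Q(g)$ is equivalent to $\lambda_\Q(f)=\lambda_\Q(g)$: each rational leaf of $L_\Q(f)$ is precisely a geodesic edge (between cyclically consecutive vertices on $\partial\mathbb{D}$) of the convex hull of some rational $\lambda_\Q(f)$-class, and the equivalence relation generated by these leaves recovers $\lambda_\Q(f)$. Since $f$ and $g$ are geometrically finite, their critical points in the Julia sets are preperiodic, so Lemma~\ref{lem:rational} applies and identifies $\lambda(f)$ and $\lambda(g)$ as the smallest closed equivalence relations on $\mathbb{R}/\mathbb{Z}$ containing $\lambda_\Q(f)$ and $\lambda_\Q(g)$ respectively, yielding $\lambda(f)=\lambda(g)$.

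The main technical obstacle is confirming the cut-point assertion for pinch points of Fatou boundaries. I expect this to follow routinely: the Jordan arc $R_f(a)\cup\{z\}\cup R_f(b)$ separates $\mathbb{C}\setminus\{z\}$ into two components, each adjacent at $z$ to a distinct arc of $\partial U$, and local connectivity of $J_f$ forces each complementary component to meet $J_f$. Once this geometric step is in hand, the rest is a clean combination of Theorem~\ref{thm:CT}, Proposition~\ref{pro:iterate}, and Lemma~\ref{lem:rational}.
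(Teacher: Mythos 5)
Your treatment of the second assertion is correct and is exactly the paper's route: $L_\Q(f)=L_\Q(g)$ recovers $\lambda_\Q(f)=\lambda_\Q(g)$, and Lemma~\ref{lem:rational} (applicable because geometrically finite maps have preperiodic critical points in the Julia set) identifies both real laminations as the smallest closed equivalence relation containing the common rational lamination. The first assertion, however, has a genuine gap. The step ``Proposition~\ref{pro:iterate} forces every cut point of $J_g$ to be eventually iterated into the finite tree $T_g$, so every cut point of $J_g$, and hence every cut point of $J_f$, is preperiodic'' does not follow, and its conclusion is false. The tree $T_g$ is finite as a simplicial object, not as a point set: $J_g\cap T_g$ is in general uncountable (the paper explicitly treats ``$J_f\cap T_f$ is at most countable'' as a nontrivial condition in Theorem LP), so landing in $T_g$ gives no preperiodicity. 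Moreover, whenever $K_f$ is not itself a maximal Fatou chain, Lemma~\ref{lem:non-cluster} and Proposition~\ref{pro:cluster} produce uncountably many cut points of $K_f$, while only countably many points can be preperiodic; the dendrite $z^2+i$ is a concrete counterexample to your intermediate claim. As written, your argument would show that every two-ray cut point yields a rational leaf, which is not what the corollary asserts and is not true.

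What is needed is the refined finiteness that the paper uses (it is spelled out at the end of the proof of Lemma~\ref{lem:lamination-version}): the landing point of a gap edge is not merely a cut point. For a finite gap it is a point at which at least three external rays land, and such points are eventually mapped into the finite vertex set of $T_g$, hence are preperiodic; for an infinite gap it is a touching point of the boundary of an eventually periodic Fatou domain $U$ separating $U$ from the rest of $K_f$, and such points are eventually mapped into the finite set $\partial U'\cap T_g$ for a periodic $U'$ (alternatively, the wake-width argument of Lemma~\ref{pro:wake}\,(3) shows directly that the bounding ray pair is eventually periodic). Either refinement gives preperiodicity of the specific landing points you need, and hence rationality of $a$ and $b$; without it the first assertion remains unproved.
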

	
	Moreover, Lemma \ref{lem:non-cluster} can be formulated in the context of laminations. A disk $S \subseteq \mathbb{D}$ is called a \emph{strip} if its boundary  consists of two hyperbolic geodesics in $\D$, referred to as the \emph{edges} of $S$, together with two disjoint arcs on $\partial \mathbb{D}$. 
	\begin{lemma}\label{lem:lamination-version}
		Suppose $f\in\CCCC_d$ is a geometrically finite polynomial. Let $\ell=\overline{\alpha\beta}$ and $\ell'=\overline{\alpha'\beta'}$ be two leaves of $L(f)$ such that $\pi_f(\alpha)$ and $\pi_f(\alpha')$ do not lie in the same maximal Fatou chain.   Let $S_0\subseteq \D$ be the strip bounded by $\ell$ and $\ell'$. Then there exist strips $S',S,S_1,S_2\subseteq \D$, and positive integers $l,m_1,m_2$, satisfying
		\begin{enumerate}
			\item $S'\subseteq S_0$ and it separates the two edges of $S_0$;\vspace{1pt}
			\item both edges of $S'$ are leaves of $L_\Q(f)$ avoiding all gap boundaries in $L(f)$, and $\sigma_d^l:\partial S'\to\partial S$ is a homeomorphism;\vspace{1pt}
			\item $\overline{S_1}\cap \overline{S_2}=\emptyset$, and for each $i\in\{1,2\}$,
			\begin{itemize}
				\item $S_i\subseteq S$ and separates the two edges of $S$;
				\item   $\sigma_d^{m_i}:\partial S_i\to\partial S$ is a homeomorphism.
			\end{itemize}
		\end{enumerate}
		
		As a consequence, if  $\pi_f(\alpha)=z$ either avoids all maximal Fatou chains, or is an endpoint of a maximal Fatou chain with $\pi_f^{-1}(z)=\{\alpha,\beta\}$, then there exists a sequence $\{\ell_n\} $ of leaves in $L_\Q(f)$, each avoiding gap boundaries, such that $\ell_n \to \ell$ and the periods of $\ell_n$ tend to infinity as $n \to \infty$. Moreover,  $\{\ell_n\}$ can be chosen in $S_0$ if  $S_0$ avoids the gap of $L(f)$ whose boundary contains $\ell$.
		\end{lemma}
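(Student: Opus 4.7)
The plan is to reduce the problem to the postcritically finite case via Theorem \ref{thm:CT}, apply Lemma \ref{lem:non-cluster} to obtain suitable regulated arcs in the filled Julia set, and then translate these arcs into strips using the dictionary between regulated arcs in $K_g$ and strips in $\D$ (by attaching to each endpoint the two external rays landing at it).

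First, Theorem \ref{thm:CT} provides a postcritically finite polynomial $g\in\PPPP_d$ with $\lambda_\Q(f)=\lambda_\Q(g)$, together with a homeomorphism $\phi:\C\to\C$ conjugating $f$ to $g$ on their Julia sets. Because $f$ is geometrically finite, every critical point of $f$ in $J_f$ is preperiodic, so Lemma \ref{lem:rational} gives $\lambda(f)=\lambda(g)$ and $L(f)=L(g)$. The homeomorphism $\phi$ sends Fatou domains of $f$ to Fatou domains of $g$ and their boundaries accordingly, so it identifies the maximal Fatou chains of $f$ with those of $g$; in particular, $\pi_g(\alpha)=\phi(\pi_f(\alpha))$ and $\pi_g(\alpha')=\phi(\pi_f(\alpha'))$ lie in distinct maximal Fatou chains of $g$. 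Let $I_0\subseteq K_g$ be the regulated arc joining these two points; by Proposition \ref{pro:cluster}, $I_0$ is not contained in any maximal Fatou chain of $g$. Applying Lemma \ref{lem:non-cluster} yields arcs $I'\subseteq I_0$, $I=[x,y]$, disjoint sub-arcs $I_1,I_2\subseteq I$, and positive integers $l,m_1,m_2$ with the four listed properties.

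To convert these arcs into strips, observe that, by the proof of Lemma \ref{lem:non-cluster}, the endpoints $x,y$ lie on an arc $\beta\in\Gamma$ inside $T_g\setminus X$ and hence avoid both the postcritical set and the branch points of $T_g$, while the endpoints of $I', I_1, I_2$ are iterated preimages of $x,y$ along inverse branches of $g^l$ and $g^{m_k}$ which avoid the critical set. Since, for a postcritically finite polynomial with locally connected Julia set, the number of external rays landing at a preperiodic point equals the number of local branches of the Hubbard tree at that point, each such endpoint receives exactly two rays, whose arguments form a leaf of $L_\Q(g)=L_\Q(f)$ that is not an edge of any gap. For each arc among $I', I, I_1, I_2$, define the corresponding strip $S', S, S_1, S_2$ as the one, among the two strips bounded by its pair of endpoint leaves, whose arc of angles on $\partial\D$ consists of those $\theta$ for which $R_g(\theta)$ lands on the arc in question. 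Properties (1)--(3) then translate directly from the four conclusions of Lemma \ref{lem:non-cluster}, using the equivariance $\pi_g\circ\sigma_d=g\circ\pi_g$ on $L(g)$ to pass the homeomorphisms $g^l:I'\to I$ and $g^{m_k}:I_k\to I$ to the homeomorphisms $\sigma_d^l:\partial S'\to\partial S$ and $\sigma_d^{m_k}:\partial S_k\to\partial S$, and the disjointness $\overline{I_1}\cap\overline{I_2}=\emptyset$ to $\overline{S_1}\cap\overline{S_2}=\emptyset$.

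For the consequence, suppose $z=\pi_f(\alpha)$ either avoids all maximal Fatou chains or is an endpoint of one with $\pi_f^{-1}(z)=\{\alpha,\beta\}$. In both cases we may choose a sequence of leaves $\ell'_n\to\ell$ such that $\pi_f(\alpha'_n)$ lies in a maximal Fatou chain distinct from the component of $K_f$ containing $z$; when additionally $S_0$ avoids the gap of $L(f)$ whose boundary contains $\ell$, each $\ell'_n$ can be chosen inside $S_0$. Applying the main statement to $(\ell,\ell'_n)$ produces strips $S'_n$ converging to $\ell$ whose edges are leaves of $L_\Q(f)$ avoiding gap boundaries. To force the periods to tend to infinity, we invoke the final assertion of Lemma \ref{lem:non-cluster}: within each $I'_n$ one can find preperiodic points of period exceeding $n$; such a point lies on $T_g$ off the branch points and therefore receives exactly two rays, yielding a leaf $\ell_n$ of period greater than $n$ sitting inside $S'_n$. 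The main obstacle throughout is verifying that the endpoints produced by Lemma \ref{lem:non-cluster} receive exactly two external rays, so that the bounding leaves of our strips are never edges of any gap; this hinges on the Hubbard-tree analysis of preperiodic points and on tracking how the ray count is preserved along inverse branches disjoint from the critical set. A secondary but essential point is the consistent selection, for each arc, of the correct one of the two strips bounded by its pair of endpoint leaves, for which the regulated arc itself serves as the natural anchor.
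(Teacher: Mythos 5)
Your overall strategy is the same as the paper's: reduce to the postcritically finite case via Theorem \ref{thm:CT}, apply Lemma \ref{lem:non-cluster} to a regulated arc joining $\pi_g(\alpha)$ to $\pi_g(\alpha')$, and convert the resulting arcs $I', I, I_1, I_2$ into strips by attaching to each endpoint the external rays landing there. Two steps, however, contain genuine gaps. The first concerns your verification that the bounding leaves avoid all gap boundaries. You derive this from the claim that each endpoint receives exactly two external rays, invoking the principle that the number of rays landing at a preperiodic point equals the number of local branches of the Hubbard tree there. That principle is false in general, and more importantly the conclusion you need does not follow from it: a point on the boundary of a bounded Fatou domain can receive exactly two external rays, and the corresponding leaf is then an edge of an \emph{infinite} gap. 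Your argument never excludes this case. What actually makes the statement true is that the points $x,y$ supplied by Lemma \ref{lem:non-cluster} lie in $Y$, so their entire forward orbits avoid $X=X_1\cup X_2\cup P_f$: avoiding $X_1$ keeps them off the boundaries of all bounded Fatou domains (any such boundary point is eventually mapped into a maximal Fatou chain containing a critical point), and avoiding $X_2$ together with local injectivity along the orbit rules out branch points. The paper handles the analogous point in the consequence by showing instead that leaves lying on gap boundaries have uniformly bounded period, which your large-period leaves automatically avoid.

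The second gap is in the consequence. You begin by ``choosing a sequence of leaves $\ell'_n\to\ell$'' with landing points in suitable maximal Fatou chains and, when required, inside $S_0$. The existence of leaves of $L(f)$ accumulating on $\ell$ from the prescribed side is essentially the assertion being proved, so as written this is circular. The paper's construction avoids it: one takes a regulated arc $\gamma:[0,1]\to K_f\cap\overline{W}$ with $\gamma(0)=z$, where $W$ is the component of $\C\setminus\overline{R_f(\alpha)\cup R_f(\beta)}$ containing no other rays landing at $z$, chosen so that $\gamma(0,t)$ lies in no maximal Fatou chain; the main statement is then applied to the strips $S_{\gamma[0,1/n]}$ associated to the shrinking arcs $\gamma[0,1/n]$, whose second bounding leaf comes from a cut point on $\gamma$ (these are dense on such an arc by Lemma \ref{lem:non-cluster}) rather than from a pre-existing sequence of leaves. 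You should replace your choice of $\ell'_n$ by this construction, and also justify the existence of such an arc $\gamma$ in the case where $z$ is an endpoint of a maximal Fatou chain with $\pi_f^{-1}(z)=\{\alpha,\beta\}$. The remaining imprecisions (attributing to Proposition \ref{pro:cluster} the fact that $I_0$ lies in no maximal Fatou chain, and speaking of ``the two strips'' bounded by a pair of disjoint leaves when there is only one) are minor and easily repaired.
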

	\begin{proof}
		By Theorem \ref{thm:CT}, we may assume that $f$ is postcritically finite.
		
		For an open arc $\gamma \subseteq K_f$ such that its endpoints  $z_1, z_2$ are cut points of $J_f$, we associate to $\gamma$ a strip $S_\gamma$ as follows. For each $i = 1, 2$, choose angles $\alpha_i,\beta_i$ such that $\pi_f(\alpha_i)=\pi_f(\beta_i) = z_i$ and such that the component of $\mathbb{C} \setminus (R_f(\alpha_i) \cup \{z_i\} \cup R_f(\beta_i))$ containing $\gamma$ is disjoint from all external rays landing at $z_i$. Then $S_\gamma$ is the strip bounded by the leaves $\overline{\alpha_1\beta_1}$ and $\overline{\alpha_2\beta_2}$ of $L(f)$.
		
		Now, let $I_0\subseteq K_f$ be a regulated arc with  endpoints  $\pi_f(\alpha)$ and $\pi_f(\alpha')$. Then $I_0$ does not entirely lie in any maximal Fatou chain.
		We may apply Lemma \ref{lem:non-cluster} to obtain arcs $I'\subseteq I_0, I, I_1,I_2$, and positive integers $l,m_1,m_2$, satisfying the corresponding properties. It follows that the strips $S'=S_{I'},S=S_{I}, S_1=S_{I_1}$ and $S_2=S_{I_2}$, together with the integers $l,m_1$ and $m_2$ satisfy the conclusions of the lemma.
		
If $\pi_f(\alpha)=z$ satisfies the condition of the lemma, then there exists a component $W$ of $\C\setminus \overline{R_f(\alpha)\cup R_f(\beta)}$, satisfying 
\begin{itemize}
\item no external rays in $W$ land at $z$; and
\item there exists  a  regulated arc $\g:[0,1]\to K_f\cap \overline{W}$ with $\g(0)=z$, such that  $\g(0,t)$ does not lie in any maximal Fatou chain for every $t>0$.
\end{itemize}
For each  $n\geq1$, set $I(n):=\g[0,1/n]$.   Then we can apply the result proved above to $S_0=S_{I(n)}$, and then obtain a leaf $\ell_n\subseteq S_{I(n)}$ in $L_\Q(f)$ with period larger than $n$. Thus $\ell_n\to \ell$ as $n\to\infty$.

To verify that these \(\ell_n\) avoid gap boundaries of \(L(f)\), it suffices to show that periods of leaves lying on gap boundaries are uniformly bounded. Let \(\ell_1\) be a leaf of \(L_\mathbb Q(f)\) contained in a gap boundary, and set \(w=\pi_f(\ell_1)\).  
If the gap is finite, then \(w\) is a branch point of \(J_f\). By Proposition~\ref{pro:iterate} it is eventually mapped into the vertex set of \(T_g\).  
If the gap is infinite, then \(w\) is eventually mapped to the intersection of the boundaries of periodic Fatou domains with \(T_g\), which is also a finite set.  
In both cases the period of \(\ell_1\) is uniformly bounded, which completes the proof.
		\end{proof}

A polynomial $g\in\CCCC_d$ is called \emph{primitive} if either $J_g=K_g$ or the bounded Fatou domains of $g$ have pairwise disjoint closures, and \emph{non-primitve} otherwise. The maximal Fatou chains of primitive polynomials are exactly the closures of the bounded Fatou domains.

\begin{lemma}\label{lem:include}
Let $f_0$ be a primitive postcritically finite polynomial and $f\in\CCCC_d$. Suppose that $(\alpha,\beta)\in\lambda_\Q(f)$ for each leaf $\overline{\alpha\beta}$ of $L_\Q(f_0)$ avoiding gap boundaries. Then $\lambda_\Q(f_0)\subseteq \lambda_\Q(f)$.
\end{lemma}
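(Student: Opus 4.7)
The plan is to reduce the lemma to verifying $(\alpha,\beta)\in\lambda_\Q(f)$ for every leaf $\overline{\alpha\beta}$ of $L_\Q(f_0)$. Since $\lambda_\Q(f)$ is an equivalence relation and the vertices of any $\lambda_\Q(f_0)$-class are cyclically linked by the edges of its hyperbolic convex hull---each of which lies in $L_\Q(f_0)$---transitivity then upgrades this pointwise statement to $\lambda_\Q(f_0)\subseteq\lambda_\Q(f)$. Leaves of $L_\Q(f_0)$ that avoid every gap boundary are handled directly by the hypothesis; for the remaining leaves, the strategy is to approximate by leaves avoiding gap boundaries and pass to the limit via the closedness of $\lambda_\Q(f)$ in $(\Q/\Z)^2$ (Lemma~\ref{lem:closed}).

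Let $\ell=\overline{\alpha\beta}\in L_\Q(f_0)$ lie on a gap boundary and set $z=\pi_{f_0}(\alpha)$. Since $f_0$ is primitive, its bounded Fatou domains have pairwise disjoint closures. Two sub-cases are clean. First, if $z$ belongs to no $\partial U$, then $z$ avoids every maximal Fatou chain, and condition (i) of the consequence of Lemma~\ref{lem:lamination-version} supplies a sequence $\ell_n\to\ell$ in $L_\Q(f_0)$ avoiding gap boundaries; closedness then gives $(\alpha,\beta)\in\lambda_\Q(f)$. Second, if $z\in\partial U$ for a (necessarily unique) bounded Fatou domain $U$ with $\pi_{f_0}^{-1}(z)=\{\alpha,\beta\}$, primitivity makes $z$ an endpoint of the maximal Fatou chain containing it, and condition (ii) of the same consequence yields the approximating sequence and hence the conclusion.

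The essential obstacle is the remaining case: $z\in\partial U$ together with $\pi_{f_0}^{-1}(z)=\{\alpha_1,\dots,\alpha_n\}$ for some $n\geq 3$. The class then bounds a finite gap $\Omega'$ of $L(f_0)$ with $n$ edges; exactly one edge (the \emph{Fatou edge}) borders the sector at $z$ containing $U$, while the other $n-1$ edges (the \emph{non-Fatou edges}) border pockets carrying Julia branches of $K_{f_0}$ attached at $z$. For each non-Fatou edge $\overline{\alpha_i\alpha_{i+1}}$, the component $W$ of $\C\setminus\overline{R_{f_0}(\alpha_i)\cup R_{f_0}(\alpha_{i+1})}$ on the non-Fatou side contains none of the rays $R_{f_0}(\alpha_j)$ with $j\neq i,i+1$, thanks to the adjacency of $\alpha_i,\alpha_{i+1}$ in the cyclic order of the vertices of $\Omega'$. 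One may therefore replay the proof of the consequence of Lemma~\ref{lem:lamination-version}: choose a regulated arc $\gamma\subseteq K_{f_0}\cap\overline{W}$ starting at $z$ whose open sub-arcs do not lie in any single maximal Fatou chain---primitivity allows $\gamma$ to be routed past any other Fatou chains encountered in the branch---then apply the main part of Lemma~\ref{lem:lamination-version} to the shrinking strips along $\gamma[0,1/n]$ to extract rational leaves avoiding gap boundaries that converge to the edge. Closedness of $\lambda_\Q(f)$ handles each non-Fatou edge, and transitivity across these $n-1\geq 2$ edges---which together identify all vertices of $\Omega'$ within one $\lambda_\Q(f)$-class---also yields the Fatou edge. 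The most delicate verification is this regulated-arc construction: one must check that $\gamma$ genuinely escapes every maximal Fatou chain along arbitrarily short initial segments, even when critical orbits on $\partial U$ extend other maximal Fatou chains into the non-Fatou Julia branch at $z$.
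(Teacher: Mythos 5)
Your proof is correct and follows essentially the same route as the paper's: reduce to showing each leaf of $L_\Q(f_0)$ on a gap boundary lies in $\lambda_\Q(f)$, approximate such a leaf from the side free of other rays landing at the same point by high-period leaves avoiding gap boundaries (the consequence of Lemma~\ref{lem:lamination-version}), pass to the limit via Lemma~\ref{lem:closed}, and use transitivity around the finite gap sharing an edge with an infinite gap. The one place you go beyond the paper's write-up is worthwhile: for a non-Fatou edge of a finite gap whose vertex class lands at a point of $\partial U$, the stated hypotheses of the ``consequence'' of Lemma~\ref{lem:lamination-version} do not literally apply (the landing point lies in $\overline{U}$ and receives more than two rays), and your explicit re-run of the regulated-arc construction on the non-Fatou side --- where primitivity forces $\overline{U}$, the only maximal Fatou chain through that point, to stay out of the sector, so in fact any regulated arc into the branch works without rerouting --- is exactly what is needed to justify the step the paper dispatches by citation.
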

\begin{proof}
Let $\ell=\overline{\alpha\beta}$ be a leaf on the boundary of a gap of $L(f_0)$. Then $\ell\subseteq L_\Q(f_0)$ by Corollary \ref{coro:same}. It suffices to show that $(\alpha,\beta)\in\lambda_\Q(f)$.

If $\ell$ is not a common edge of two gaps of $L(f_0)$, then the latter part of Lemma \ref{lem:lamination-version} gives a sequences of leaves $\{\ell_n=\overline{\alpha_n\beta_n}\}$ in $L_\Q(f_0)$ avoiding gap boundaries and converging to $\ell$. Since $(\alpha_n,\beta_n)\in\lambda_\Q(f)$, it follows from Lemma \ref{lem:closed}  that $(\alpha,\beta)\in \lambda_\Q(f)$.

Suppose that $\ell$ is the common edge of two gaps of $L(g_0)$. Since $f_0$ is primitive, one of the two gaps is infinite, and the other one is finite, denoted by $G$, such that all leaves $\ell_j=\overline{\theta_j\eta_j}, j=1,\ldots,m$ of $G$ other than $\ell$ lie in the boundary of exactly one gap $G$. The previous argument shows that $(\theta_j,\eta_j)\in\lambda_\Q(f)$ for each $j$. Thus $(\alpha,\beta)\in \lambda_\Q(f)$.
\end{proof}

	Fix a primitive postcritically finite polynomial $f_0$. Suppose that $f$ is a polynomial in $\CCCC_d$ such that $\lambda_\Q(f_0)\subseteq \lambda_\Q(f)$. Let $\Omega$ be a gap of $L(f_0)$.
	
	If $\Omega$ is a finite gap, then all its vertices are rational by Corollary \ref{coro:same}, and hence $\pi_f(\partial\Omega)$ is a singleton due to the inclusion  $\lambda_\Q(f_0)\subseteq \lambda_\Q(f)$. In this case, we set $\pi_f(\overline{\Omega})=\pi_f(\partial\Omega)$.
	
	Suppose now that $\Omega$ is an infinite gap. For any edge $\overline{ab}$ of $\Omega$, denote by $]a,b[$ the component of $\partial\D\setminus\{a,b\}$ with $]a,b[\,\cap \partial\Omega=\emptyset$. By Corollary \ref{coro:same}, we have $\overline{ab}\in L_\Q(f_0)$.
	Define
	\begin{equation}\label{eq:23}
		\pi_f(\overline{\Omega})=K_{f,\Omega}:= K_f\setminus \bigsqcup_{\ell\subseteq\partial \Omega}W_f(\ell),\end{equation}
	where $\ell=\overline{ab}$ ranges over all edges of $\Omega$, and $W_f(\ell)$ denotes the component of $\C\setminus\overline{R_f(a)}\cup\overline{R_f(b)}$ containing $R_f(\theta), \theta\in]a,b[$.
	Note that $\pi_{f_0}(\overline{\Omega})$ is the closure of a Fatou domain $U$ of $f_0$, and $K_{f,\Omega}$ coincides with $K_{f,U}$ defined in \eqref{eq:11}.

	\begin{proposition}\label{pro:ren}
	Let $f_0$ be a primitive postcritically finite polynomial such that $\lambda_\Q(f_0)\subseteq\lambda_\Q(f)$.	Then for distinct infinite gaps $\Omega$ and $\Omega'$ of $L(f_0)$, $K_{f,\Omega}$ and $K_{f,\Omega'}$ are disjoint full continua. Moreover,
		if $\sigma_d(\Omega')=\Omega$, then $K_{f,\Omega'}$ is a component of $f^{-1}(K_{f,\Omega})$. 	\end{proposition}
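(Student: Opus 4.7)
The plan is to handle the four assertions in sequence: fullness of $K_{f,\Omega}$, its connectedness, disjointness for $\Omega\neq\Omega'$, and the preimage structure.

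For \emph{fullness}, I will write $\C\setminus K_{f,\Omega}=U_f(\infty)\cup\bigsqcup_{\ell\subseteq\partial\Omega}W_f(\ell)$ and note that each $W_f(\ell)$ contains external rays and hence meets $U_f(\infty)$, so the complement is connected. For \emph{connectedness} of $K_{f,\Omega}$, I will enumerate the edges of $\Omega$ as $\{\ell_n=\overline{a_nb_n}\}_{n\geq 1}$ and form the decreasing sequence $K^{(n)}=K_f\setminus\bigsqcup_{i=1}^{n}W_f(\ell_i)$, with $K_{f,\Omega}=\bigcap_n K^{(n)}$. Using $\lambda_\Q(f_0)\subseteq\lambda_\Q(f)$, the single landing point $z_n:=\pi_f(a_n)=\pi_f(b_n)$ lies in $K^{(n-1)}$ (since $a_n,b_n$ sit on the $\Omega$-side arc of every earlier edge). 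Writing $K^{(n-1)}$ as the union of its two closed pieces on either side of the curve $\overline{R_f(a_n)\cup R_f(b_n)}$, meeting only at $z_n$, the connectedness of $K^{(n-1)}$ forces each piece, and in particular $K^{(n)}$, to be connected. The nested intersection then yields the continuum $K_{f,\Omega}$.

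For \emph{disjointness}, I will exploit primitivity of $f_0$. Since $\overline U\cap\overline{U'}=\emptyset$ for the bounded Fatou domains corresponding to $\Omega\neq\Omega'$, the vertex sets of $\Omega$ and $\Omega'$ are disjoint. Let $\ell^*=\overline{a^*b^*}$ be the unique edge of $\Omega$ such that $\Omega'$ lies in the component of $\D\setminus\ell^*$ opposite $\Omega$, and let $\widetilde\ell^*$ be the analogous edge of $\Omega'$. By definition $K_{f,\Omega}\cap W_f(\ell^*)=\emptyset$, and $K_{f,\Omega'}\subseteq\overline{W_f(\ell^*)}$ because $K_{f,\Omega'}$ avoids $W_f(\widetilde\ell^*)$, which contains the entire $\Omega$-side of $\ell^*$. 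Since $K_f$ is disjoint from external rays,
\[
K_{f,\Omega}\cap K_{f,\Omega'}\subseteq\partial W_f(\ell^*)\cap K_f=\{z^*\},\qquad z^*:=\pi_f(a^*).
\]
The hard step is excluding $z^*\in K_{f,\Omega'}$: such containment would force, for every edge $\widetilde\ell=\overline{\widetilde a\widetilde b}$ of $\Omega'$, the point $z^*\in\partial W_f(\widetilde\ell)$, hence $z^*\in\{\pi_f(\widetilde a),\pi_f(\widetilde b)\}$. Since $\Omega'$ has infinitely many edges with infinitely many distinct rational vertices, this would produce infinitely many rational angles landing at the preperiodic point $z^*\in J_f$, contradicting the finiteness of external rays landing at any point of $J_f$.

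Finally, for the \emph{preimage statement} when $\sigma_d(\Omega')=\Omega$, I will use the functional equation $\pi_f\circ\sigma_d=f\circ\pi_f$ on $L(f_0)$: the map $f$ sends the $\Omega'$-side of each edge of $\Omega'$ into the $\Omega$-side of the corresponding edge of $\Omega$, so $f(K_{f,\Omega'})\subseteq K_{f,\Omega}$ and $K_{f,\Omega'}\subseteq f^{-1}(K_{f,\Omega})$. For any $z\in f^{-1}(K_{f,\Omega})\setminus K_{f,\Omega'}$, some edge $\ell'$ of $\Omega'$ satisfies $z\in W_f(\ell')$; since $W_f(\ell')$ is open and disjoint from $K_{f,\Omega'}$, $z$ has a neighborhood disjoint from $K_{f,\Omega'}$. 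Hence $K_{f,\Omega'}$ is clopen in $f^{-1}(K_{f,\Omega})$, and by its connectedness it is a connected component.
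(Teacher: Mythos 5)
Your treatment of fullness and connectedness is fine and matches the paper's one-line argument (nested intersection of closed topological disks). The preimage part and, crucially, the disjointness part have gaps.

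The disjointness argument breaks at exactly the step you flag as hard. Your reduction of $K_{f,\Omega}\cap K_{f,\Omega'}$ to the single point $z^*=\pi_f(a^*)$ is correct, but the exclusion of $z^*\in K_{f,\Omega'}$ does not work: membership in $K_{f,\Omega'}$ only says that $z^*$ lies in \emph{none} of the open sets $W_f(\widetilde\ell)$; it does not place $z^*$ on the boundary of every such set. One can only guarantee $z^*\in\overline{W_f(\widetilde\ell^*)}$ for the single edge $\widetilde\ell^*=\overline{\widetilde a^*\widetilde b^*}$ of $\Omega'$ facing $\Omega$, because $a^*,b^*$ lie in the outer arc of that edge but not of any other edge of $\Omega'$. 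So your argument yields only $z^*=\pi_f(\widetilde a^*)=\pi_f(\widetilde b^*)$, i.e.\ four rational rays landing at $z^*$ --- no contradiction with finiteness of landing rays. What must be excluded is precisely this scenario: the extra identification $(a^*,\widetilde a^*)\in\lambda_\Q(f)\setminus\lambda_\Q(f_0)$, which would make the two continua touch at $z^*$. Primitivity of $f_0$ gives disjointness of the vertex sets on $\partial\D$, but nothing elementary prevents $\lambda_\Q(f)$ from adding such an identification. The paper rules it out by applying Lemma~\ref{lem:lamination-version} to the strip between $\ell^*$ and $\widetilde\ell^*$: this produces a leaf $\overline{\theta\eta}$ of $L_\Q(f_0)$ strictly inside the strip whose period exceeds the periods of $\pi_f(a^*)$ and $\pi_f(\widetilde a^*)$, so the ray pair $\overline{R_f(\theta)}\cup\overline{R_f(\eta)}$ lands at a different point and strictly separates $K_{f,\Omega}$ from $K_{f,\Omega'}$. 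That lemma (resting on the expansion/pigeonhole argument of Lemma~\ref{lem:non-cluster}) is the real content here and cannot be replaced by the counting you propose.

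A secondary gap: in the last paragraph you only establish that $K_{f,\Omega'}$ is \emph{closed} in $f^{-1}(K_{f,\Omega})$ (you show the relative complement is open), not that it is clopen, and a closed connected subset need not be a component. The paper's route is to observe that $f^{-1}(K_{f,\Omega})$ is the union of the finitely many pairwise disjoint compacta $K_{f,\Omega_i}$ over all gaps $\Omega_i$ with $\sigma_d(\Omega_i)=\Omega$ (each mapped onto $K_{f,\Omega}$), after which the component statement is immediate; some such covering argument is needed to supply the missing openness.
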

	\begin{proof}
		First, note that $K_{f,\Omega}$ is the intersection of a nested sequence of closed disks, hence a full continuum. Since $\overline{\Omega}\cap\overline{\Omega'}=\emptyset$, there exist leaves $\ell\subseteq\partial\Omega$ and $\ell'\subseteq\partial\Omega'$ such that the strip $S_0$ bounded by $\ell$ and $\ell'$ separates $\Omega$ and $\Omega'$. Applying Lemma \ref{lem:lamination-version} to $f_0$ and $S_0$, we obtain a leaf $\overline{\theta\eta}\subseteq S_0$ in $L_\mathbb{Q}(f_0)$ with a sufficiently large period. Since $(\theta,\eta)\in \lambda_\Q(f)$, the union $\overline{R_f(\theta)} \cup \overline{R_f(\eta)}$ forms an arc separating $K_{f,\Omega}$ from $K_{f,\Omega'}$. Consequently, $K_{f,\Omega} \cap K_{f,\Omega'} = \emptyset$.
		
		Let  $\Omega_1 = \Omega', \Omega_2, \ldots, \Omega_m$ be all gaps of $L(f_0)$ satisfying $\sigma_d(\Omega_i) = \Omega$. Then $f(K_{f,\Omega_i}) = K_{f,\Omega}$ for each $i$. Since the sets $K_{f,\Omega_1}, \ldots, K_{f,\Omega_m}$ are pairwise disjoint, it follows that $K_{f,\Omega'}=K_{f,\Omega_1}$ is a component of $f^{-1}(K_{f,\Omega})$.
%
	\end{proof}
	
	We now consider the convergence of laminations.	
	Throughout the remainder of this section, we fix a primitive postcritically finite polynomial $f_0$ and set $\lambda_{\Q}=\lambda_{\Q}(f_0)$.
	
	\begin{proposition}\label{pro:lami-convergence}
		Let $\{f_n\}_{n\geq1}\subseteq \CCCC_d$ be a sequence of maps converging to $g$.
		\begin{enumerate}
			\item If $\lambda_\Q\subseteq \lambda_\Q(f_n)$ for all $n$, then $\lambda_\Q\subseteq\lambda_\Q(g)$.
			\item If $\lambda_\Q\subseteq \lambda_\Q(g)$ and all $f_n$ lie in the same relative hyperbolic component, then $\lambda_\Q\subseteq \lambda_\Q(f_1)$.
		\end{enumerate}
	\end{proposition}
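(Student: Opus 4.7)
For part (1), I would fix \((\theta_1,\theta_2)\in\lambda_\Q\) and analyze the common landing point \(w_n:=\pi_{f_n}(\theta_1)=\pi_{f_n}(\theta_2)\). After passing to a subsequence \(w_n\to w\), upper semi‑continuity of Julia sets places \(w\) in \(J_g\). Since \(\theta_i\) is rational, each \(\pi_g(\theta_i)\) is preperiodic, hence attached to either a repelling or a parabolic \(g\)-cycle. If both landing points are pre‑repelling, Lemma~\ref{lem:stable}(1) delivers \(\pi_{f_n}(\theta_i)\to\pi_g(\theta_i)\) for \(i=1,2\); comparing limits with the identity \(\pi_{f_n}(\theta_1)=\pi_{f_n}(\theta_2)\) forces \(\pi_g(\theta_1)=\pi_g(\theta_2)\), so \((\theta_1,\theta_2)\in\lambda_\Q(g)\). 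When \(\pi_g(\theta_i)\) is pre‑parabolic, I would first show that the \(f_n\)-cycle supporting \(w_n\) converges to the \(g\)-cycle at \(\pi_g(\theta_i)\) with matching period, multiplier and multiplicity (via a standard multiplier‑count argument together with the rationality of \(\theta_i\)), at which point Lemma~\ref{lem:stable}(2) applies and the argument concludes exactly as in the pre‑repelling case.

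For part (2), the holomorphic motion over \(\HHHH\) described in the proof of Proposition~\ref{pro:same1} preserves external rays and their landing behaviour, yielding \(\lambda_\Q(f_n)=\lambda_\Q(f_1)\) for every \(n\). The claim therefore reduces to \(\lambda_\Q\subseteq\lambda_\Q(f_1)\). Assume for contradiction that some \((\theta_1,\theta_2)\in\lambda_\Q\) satisfies \(\pi_{f_n}(\theta_1)\ne\pi_{f_n}(\theta_2)\) for every \(n\). The hypothesis \(\lambda_\Q\subseteq\lambda_\Q(g)\) gives \(w:=\pi_g(\theta_1)=\pi_g(\theta_2)\); in the pre‑repelling, non‑pre‑critical subcase the second clause of Lemma~\ref{lem:stable}(1) immediately forces \(\pi_{f_n}(\theta_1)=\pi_{f_n}(\theta_2)\) for all large \(n\), the desired contradiction. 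The pre‑critical subcase will be ruled out by the rigidity of the holomorphic motion on \(\HHHH\), which transports the local branching data at \(w\) consistently across \(\HHHH\); the pre‑parabolic subcase will be reduced to Lemma~\ref{lem:stable}(2) using that parabolic cycles of \(g\) arise as holomorphic‑motion limits of parabolic cycles of \(f_n\in\HHHH\), with automatic matching of period, multiplier and multiplicity.

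The main obstacle in both parts is the pre‑parabolic case of Lemma~\ref{lem:stable}: it demands an a priori matching of periods, multipliers and multiplicities of parabolic cycles between \(g\) and each \(f_n\). In part (1) this matching has to be extracted from the bare convergence \(f_n\to g\) combined with the forced colanding of \(R_{f_n}(\theta_1)\) and \(R_{f_n}(\theta_2)\), which I expect to be the delicate technical step; in part (2) the matching is supplied for free by the holomorphic‑motion rigidity inherent to the relative hyperbolic component, so the contradiction step runs essentially mechanically once the relevant subcase analysis is in place.
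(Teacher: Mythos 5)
Your strategy for the pre-repelling case is sound and matches the core mechanism of the paper (apply Lemma~\ref{lem:stable}\,(1) and compare landing points), but the pre-parabolic case is a genuine gap, not merely a ``delicate technical step.'' In part (1) you propose to extract, from the bare convergence $f_n\to g$, a matching of period, multiplier and multiplicity between parabolic cycles of $g$ and of $f_n$ so that Lemma~\ref{lem:stable}\,(2) applies. No such matching can be extracted: a sequence $f_n\to g$ with $g$ parabolic need not have any parabolic cycles at all (the nearby cycles of $f_n$ are typically attracting or repelling; this is exactly the setting of parabolic implosion, where external rays and Julia sets are discontinuous). The hypothesis of Lemma~\ref{lem:stable}\,(2) is a strong assumption on the sequence, not a consequence of convergence. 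The situation in part (2) is worse: the maps $f_n$ lie in a relative hyperbolic component, hence are quasiconformally conjugate to a postcritically finite map near their Julia sets and have \emph{no} parabolic cycles, so ``parabolic cycles of $g$ arising as holomorphic-motion limits of parabolic cycles of $f_n$'' is vacuous.

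The paper's proof never confronts the parabolic (or precritical) case; it eliminates it by a reduction you are missing. Since $\lambda_\Q=\lambda_\Q(f_0)$ for a \emph{primitive} postcritically finite $f_0$, Lemma~\ref{lem:include} reduces both statements to leaves $\overline{\alpha\beta}$ of $L_\Q(f_0)$ avoiding gap boundaries. The latter part of Lemma~\ref{lem:lamination-version} lets one approximate any such leaf by leaves $\overline{\alpha_k\beta_k}$ of $L_\Q(f_0)$ whose $\sigma_d$-periods exceed a constant $N_g$ chosen so that every rational angle of period $>N_g$ lands, for $g$, at a pre-repelling point whose orbit avoids the critical points (possible because $g$ has only finitely many non-repelling cycles). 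For these high-period leaves Lemma~\ref{lem:stable}\,(1) applies cleanly in both directions needed, and the closedness of $\lambda_\Q(g)$, respectively of $\lambda_\Q(f_1)$ (Lemma~\ref{lem:closed}), transfers the conclusion back to the original leaf $\overline{\alpha\beta}$. Without this reduction --- which uses the primitivity of $f_0$ and the lamination structure in an essential way --- your case analysis cannot be completed.
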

	\begin{proof}
		There exists a positive number $N_g$ such that, for any $\theta \in \mathbb{Q}/\mathbb{Z}$ with $\sigma_d$-period greater than $N_g$, the landing point of $R_g(\theta)$ is pre-repelling and its orbit avoids the critical points of $g$.
		
		(1) By Lemma \ref{lem:include}, it suffices to show that $(\alpha,\beta) \in \lambda_\mathbb{Q}(g)$ for any leaf $\overline{\alpha\beta}$ of $L_\mathbb{Q}(f_0)$ avoiding gap boundaries. In this case, $\pi_{f_0}(\alpha)$ avoids maximal Fatou chains of $f_0$.
So by the latter part of Lemma~\ref{lem:lamination-version}, there exists a sequence $\{\overline{\alpha_k\beta_k}\}$ of leaves in $L_\mathbb{Q}(f_0)$ converging to $\overline{\alpha\beta}$ such that the $\sigma_d$-period of $\overline{\alpha_k\beta_k}$ exceeds $N_g$ for each $k\geq1$.
		Since $(\alpha_k,\beta_k)\in \lambda_\mathbb{Q}(f_n)$ for all $n$, Lemma~\ref{lem:stable}\,(1) implies $(\alpha_k,\beta_k) \in \lambda_\mathbb{Q}(g)$. As $\lambda_\Q(g)$ is closed in $(\Q/\Z)^2$ by Lemma~\ref{lem:closed}, we conclude that $(\alpha,\beta) \in \lambda_\mathbb{Q}(g)$.\vspace{3pt}
		
		(2) By Lemma \ref{lem:include}, it suffices to show that $(\alpha,\beta) \in \lambda_\mathbb{Q}(f_{1})$ for any leaf $\overline{\alpha\beta}$ of $L_\mathbb{Q}(f_0)$ with $\sigma_d$-period greater than $N_g$.
		In this case, $R_g(\alpha)$ and $R_g(\beta)$ land at a common pre-repelling point whose orbit avoids critical points. By Lemma~\ref{lem:stable}\,(1), we have $(\alpha,\beta) \in \lambda_\mathbb{Q}(f_n)$ for all sufficiently large $n$. Since all $f_n$ belong to the same relative hyperbolic component, Proposition~\ref{pro:same1} implies $(\alpha,\beta) \in \lambda_\mathbb{Q}(f_{1})$, completing the proof of Statement (2).
	\end{proof}
	
	By a \emph{finite chain of (relative) hyperbolic components}, we mean the union of closures of finitely many (relative) hyperbolic components $\HHHH_1,\ldots,\HHHH_m$ such that $\overline{\HHHH_i}\cap \overline{\HHHH_{i+1}}\not=\emptyset$ for $i=1,\ldots,m-1$.
	
	\begin{proposition}\label{pro:transitive1}
		Suppose that $f$ and $g$ are postcritically finite polynomials such that $\HHHH_f$ and $\HHHH_g$ are contained in a finite chain of relative hyperbolic components. Then $\lambda_\Q\subseteq \lambda_\Q(f)$ if and only if $\lambda_\Q\subseteq \lambda_\Q(g)$.
	\end{proposition}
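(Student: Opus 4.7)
The plan is to reduce to the base case of adjacency and then bridge the two components through a common boundary point, using the two halves of Proposition~\ref{pro:lami-convergence} in opposite directions. By definition of the chain, there are relative hyperbolic components $\HHHH_1=\HHHH_f,\HHHH_2,\ldots,\HHHH_m=\HHHH_g$ with $\overline{\HHHH_i}\cap\overline{\HHHH_{i+1}}\neq\emptyset$, and Proposition~\ref{pro:same1} guarantees each is indexed by a unique postcritically finite map $f_i$. A straightforward induction on the length $m$ then reduces the problem to the case $\overline{\HHHH_f}\cap\overline{\HHHH_g}\neq\emptyset$.

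In that base case, I would fix $h\in\overline{\HHHH_f}\cap\overline{\HHHH_g}$ and choose sequences $\{f_n\}\subseteq\HHHH_f$ and $\{g_n\}\subseteq\HHHH_g$ both converging to $h$. To prove the forward implication, assume $\lambda_\Q\subseteq\lambda_\Q(f)$. Proposition~\ref{pro:same1} gives $\lambda_\Q(f_n)=\lambda_\Q(f)$ for every $n$, so Proposition~\ref{pro:lami-convergence}\,(1) applied to $f_n\to h$ yields $\lambda_\Q\subseteq\lambda_\Q(h)$. Then Proposition~\ref{pro:lami-convergence}\,(2) applied to the sequence $g_n\to h$, together with the fact that all $g_n$ lie in the single relative hyperbolic component $\HHHH_g$, delivers $\lambda_\Q\subseteq\lambda_\Q(g_1)=\lambda_\Q(g)$. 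The reverse implication is symmetric.

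The argument is brief, and the only conceptual point worth flagging is that parts (1) and (2) of Proposition~\ref{pro:lami-convergence} play complementary roles at the bridging point $h$: part (1) transports the inclusion $\lambda_\Q\subseteq\lambda_\Q(f_n)$ across an arbitrary limit, while part (2) pulls the inclusion at $h$ back along a sequence that is confined to a fixed relative hyperbolic component. The only bookkeeping item is to notice that $\lambda_\Q(h)$ is well defined even if $h$ is neither subhyperbolic nor in any relative hyperbolic component, which is immediate since every rational external ray for a polynomial in $\CCCC_d$ lands. I do not anticipate a genuine obstacle beyond correctly aligning the two directions of Proposition~\ref{pro:lami-convergence}.
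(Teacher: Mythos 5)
Your proposal is correct and follows essentially the same route as the paper: reduce by induction to the case $\overline{\HHHH_f}\cap\overline{\HHHH_g}\neq\emptyset$, pick a common boundary point $\tilde f$, push the inclusion forward with Proposition~\ref{pro:lami-convergence}\,(1) and pull it back with Proposition~\ref{pro:lami-convergence}\,(2). The paper's proof is exactly this argument, stated slightly more tersely.
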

	\begin{proof}
		By symmetry, it suffices to prove that $\lambda_\mathbb{Q} \subseteq \lambda_\mathbb{Q}(f)$ implies $\lambda_\mathbb{Q} \subseteq \lambda_\mathbb{Q}(g)$.
		By induction, we may assume $\overline{\HHHH_f} \cap \overline{\HHHH_g} \neq \emptyset$.
		Let $\t{f}\in \overline{\HHHH_f}\cap\overline{\HHHH_g}$. The application of Proposition \ref{pro:lami-convergence}\,(1) to $\HHHH_f$ and $\t{f}$ gives $\lambda_\Q\subseteq \lambda_\Q(\t{f})$, while applying Proposition \ref{pro:lami-convergence}\,(2) to $\t{f}$ and $\HHHH_g$ yields $\lambda_\Q\subseteq \lambda_\Q(g)$.
	\end{proof}
	
	\section{The extended molecules}\label{sec:B}
	

	For any bounded hyperbolic component $\HHHH$, the \emph{extended molecule} $\MMMM_+=\MMMM_+(\HHHH)$  is defined as the closure of the union of all relative hyperbolic components which are connected to $\HHHH$ by finite chains of relative hyperbolic components.

	In this section, we shall characterize the geometrically finite maps in extended molecules.
	\begin{theorem}\label{thm:gf}
		Every extended molecule $\MMMM_+$ contains a unique primitive hyperbolic component $\HHHH$, and it holds that $\lambda_\Q(f_0)\subseteq \lambda_\Q(f)$ for every $f\in \MMMM_+$, where $f_0$ is the unique postcritically finite map in $\HHHH$. Furthermore, a geometrically finite polynomial $f$ lies in $\MMMM_+$ if and only if
		\begin{enumerate}
			\item $\lambda_\Q(f_0)\subseteq \lambda_\Q(f)$; and
			\item  for any infinite gap $\Omega$ of $L(f_0)$,  the set $K_{f,\Omega}$   in \eqref{eq:23} is a maximal Fatou chain of $f$.
		\end{enumerate}

		
	\end{theorem}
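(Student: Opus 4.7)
The theorem has four pieces to establish: (A) existence and uniqueness of the primitive hyperbolic component $\HHHH$ inside $\MMMM_+$; (B) the lamination inclusion $\lambda_\Q(f_0)\subseteq\lambda_\Q(f)$ for every $f\in\MMMM_+$; (C) the forward implication of the combinatorial characterization for geometrically finite $f$; and (D) the converse.

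For (A), uniqueness falls out of the earlier machinery: two primitive $\HHHH,\HHHH'\subseteq\MMMM_+$ are joined by a finite chain of relative hyperbolic components, so Proposition~\ref{pro:transitive1} gives $\lambda_\Q(f_0)=\lambda_\Q(f_0')$ for their PCF representatives, and Proposition~\ref{pro:same1} then forces $f_0=f_0'$ and thus $\HHHH=\HHHH'$. For existence I would start from any relative hyperbolic component in the defining chain of $\MMMM_+$ and iteratively apply the Cui--Tan hyperbolic--parabolic deformation \cite{CT2} at its PCF representative to introduce a parabolic cycle separating two touching bounded Fatou domains; each such step crosses exactly one adjacency into a hyperbolic component with strictly fewer Fatou-domain touchings, terminating at a primitive $\HHHH$. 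Part (B) is then automatic: one iterates Proposition~\ref{pro:transitive1} along the defining union and invokes Proposition~\ref{pro:lami-convergence}(1) to pass to the closure.

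For (C), fix geometrically finite $f\in\MMMM_+$; condition (1) is already (B). For (2) I proceed by induction along a finite chain $\HHHH_f=\HHHH_0,\dots,\HHHH_m=\HHHH$ of relative hyperbolic components, followed by a closure argument. At $f_0\in\HHHH$ primitivity gives $K_{f_0,\Omega}=\overline{U_\Omega}$, a maximal Fatou chain. Inside each $\HHHH_i$ all maps are conjugate on a neighborhood of the Julia set by a holomorphic motion, preserving the maximal-Fatou-chain property of $K_{g,\Omega}$. Across each adjacency I invoke Lemma~\ref{lem:stable}\,(2) for the preperiodic angles defining $K_{g,\Omega}$, together with Proposition~\ref{pro:ren}, to track how these combinatorial sets remain in bijection with the actual maximal Fatou chains after the merging induced by the parabolic bifurcation. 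For the final passage to the closure I combine Proposition~\ref{pro:lami-convergence} with Lemmas~\ref{lem:stable} and~\ref{lem:non-cluster} to certify that the Hausdorff limit of maximal Fatou chains of the approximants remains a maximal Fatou chain of $f$.

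For (D), assume $f$ is geometrically finite and satisfies (1) and (2). Each critical point of $f$ in $J_f$ then lies in some $K_{f,\Omega}$, hence on the boundary of a bounded Fatou domain. I apply Theorem~\ref{thm:perturbation} successively to each last critical point in $J_f$: by Statement~(5) the resulting perturbations $g_n\to f$ are all quasiconformally conjugate to a single subhyperbolic map $\widetilde f$, and by Statements~(4)--(5), together with Lemma~\ref{lem:stable}, every $g_n$ inherits conditions (1) and (2). In particular $\{g_n\}$ lies in a common relative hyperbolic component $\HHHH_{\widetilde f}$. To show $\HHHH_{\widetilde f}\subseteq\MMMM_+$, I build a finite chain of relative hyperbolic components from $\HHHH_{\widetilde f}$ to $\HHHH$ by reverse induction on the total number of periodic cut points of the maximal Fatou chains of $\widetilde f$: at each step the Cui--Tan hyperbolic--parabolic deformation applied at one such cut point (whose renormalization premise is supplied by Proposition~\ref{pro:renormalization}) separates one Fatou chain into two, crossing exactly one adjacency into a relative hyperbolic component with strictly fewer periodic cut points; Proposition~\ref{pro:cluster}\,(4) bounds the initial count, so the induction terminates at $\HHHH$. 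Thus $\widetilde f\in\MMMM_+$ and $f\in\overline{\HHHH_{\widetilde f}}\subseteq\MMMM_+$.

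The main obstacle is Part (D), with two delicate points. The first is verifying that Theorem~\ref{thm:perturbation} can be applied successively to all last critical points in $J_f$ while preserving conditions (1) and (2) at each stage; this relies on the precise fiber structure of the semiconjugacy $\phi$ in Statement~(5) and on the equivariance of the bijection $\iota_n$ in Statement~(4), and requires choosing a coherent ordering of the perturbations so that later perturbations do not disturb the earlier Fatou-chain assignments. The second is constructing the explicit chain of relative hyperbolic components from $\HHHH_{\widetilde f}$ back to $\HHHH$: one must ensure each parabolic bifurcation crosses into a single new relative-HC boundary and strictly reduces the cut-point count, and that the resulting intermediate PCF representatives remain consistent with $\lambda_\Q(f_0)$ so as not to exit $\MMMM_+$ en route.
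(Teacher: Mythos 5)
Your overall skeleton (uniqueness via Propositions \ref{pro:transitive1} and \ref{pro:same1}, part (B) via \ref{pro:transitive1} plus \ref{pro:lami-convergence}(1)) matches the paper, but both directions of the characterization have genuine gaps. For the necessity of condition (2), your plan is to ``certify that the Hausdorff limit of maximal Fatou chains of the approximants remains a maximal Fatou chain of $f$.'' That closedness is precisely what must be proved, and it does not follow from semicontinuity or from Lemma \ref{lem:stable} alone; the paper argues by contradiction instead: if $K_{f,\Omega}$ is \emph{not} a maximal Fatou chain, Proposition \ref{pro:cluster} produces a regulated arc in $K_{f,\Omega}$ outside every maximal Fatou chain, Lemma \ref{lem:lamination-version} then yields two disjoint strips $S_1,S_2\Subset S$ with homeomorphic returns $\sigma_d^{m_i}\colon\partial S_i\to\partial S$ whose landing points are pre-repelling with orbits avoiding the critical points, this horseshoe persists under perturbation by Lemma \ref{lem:stable}(1), and the resulting uncountably many ray pairs landing in $K_{f_n,\Omega}$ contradict Propositions \ref{pro:reduction}(1) and \ref{pro:cluster} for the subhyperbolic approximants $f_n\in\MMMM_+$. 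Without this contradiction structure your ``closure argument'' is a restatement of the claim.

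Part (D) fails at its first step: conditions (1) and (2) do \emph{not} imply that every critical point of $f$ in $J_f$ lies on the boundary of a bounded Fatou domain. A critical point may lie in $K_{f,\Omega}\setminus\bigcup_i E^i$ (a limit point of the chain off every Fatou boundary), or outside every $K_{f,\Omega}$ altogether when $f_0$ itself has Julia critical points; in either case Theorem \ref{thm:perturbation}(5) is not applicable to it. You also never remove parabolic cycles, so $f$ need not lie in the closure of any relative hyperbolic component reachable by your perturbations. The paper's route is different and avoids both problems: first apply simple plumbing (Proposition \ref{pro:plumbing1}) to obtain a subhyperbolic $g$ with $f\in\overline{\HHHH_g}$ still satisfying (1) and (2), then invoke the already-established reduction (Proposition \ref{pro:reduction}, whose Lemma \ref{lem:reduce} is where Theorem \ref{thm:perturbation} is legitimately used, namely for a critical point actually on a Fatou boundary) to connect $\HHHH_g$ to a primitive component $\HHHH_{g_0}$. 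Finally, the point you flag as ``delicate'' --- that the terminal primitive component is $\HHHH$ itself --- is resolved in the paper by comparing laminations: $\lambda_\Q(f_0)\subseteq\lambda_\Q(g_0)$ by Proposition \ref{pro:transitive1}, and condition (2) together with Proposition \ref{pro:reduction}(1) forces $L(f_0)=L(g_0)$ (both gap systems realize the same pairwise disjoint maximal Fatou chains of $f$), whence $f_0=g_0$ by Corollary \ref{coro:same} and Proposition \ref{pro:same1}. This identification uses condition (2) essentially and cannot be skipped.
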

	
	If $\HHHH$ is the main hyperbolic component,  then this theorem is exactly Theorem LP due to Propositions \ref{pro:iterate} and \ref{pro:cluster}. Our proof of Theorem \ref{thm:gf} applies the hyperbolic-parabolic deformation theory developed in \cite{CT2} and Theorem \ref{thm:perturbation}, differing from the technique in \cite{LP}.
	
	\subsection{Hyperbolic-parabolic deformation}\label{sec:appendix}
	We borrow several key results from the general theory of hyperbolic-parabolic deformation in \cite{CT2} and adapt them to polynomial settings.
	
	\begin{proposition}[Simple pinching \textup{\cite[Theorem 1.5]{CT2}}]\label{pro:pinching}
		Let $f\in\CCCC_d$ be a subhyperbolic polynomial. Let $x_1,\ldots, x_m$ be attracting periodic points of $f$ with disjoint orbits. For each $k\in\{1,\ldots,m\}$, let $z_k$ be a periodic point on the boundary of the immediate basin of $x_k$ such that  the period of  $z_k$ does not exceed that of $x_k$. Then there exist a quasiconformal path $\{f_t\}_{t\geq0}$ with $f=f_0$ and a continuous onto map $\phi:\C\to\C$, such that
		\begin{enumerate}
			\item $f_t$ converges uniformly to a geometrically finite map $g$ as $t\to\infty$;
			
			\item $\phi: J_f\to J_g$ is a topological conjugacy between between $f:J_f\to J_f$ and $g:J_g\to J_g$;

			\item for each $k$, $\phi(x_k) = \phi(z_k)$ is a parabolic point of $g$ with period equal to that of $z_k$.
		\end{enumerate}
	\end{proposition}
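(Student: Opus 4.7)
The plan is to treat this as a specialization of the general hyperbolic–parabolic deformation theorem \cite[Theorem 1.5]{CT2} to the polynomial setting. The essential input required by that theorem is an $f$-invariant system of simple arcs joining each attracting cycle to its designated target boundary cycle, together with a compatible family of Beltrami differentials supported in a thin annular neighborhood of the arc system; the theorem's output is then the pinching path, its geometrically finite limit, and the semiconjugacy.

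First, for each $k$, let $p_k$ be the period of $x_k$, $V_k$ its immediate basin, and $\widetilde{p}_k\leq p_k$ the period of $z_k$. Using a Koenigs (or B\"ottcher) chart $\psi_k:V_k\to\D$ conjugating $f^{p_k}|_{V_k}$ to a standard model, the periodic point $z_k$ corresponds to a rational angle on $\partial\D$. The assumption $\widetilde{p}_k\leq p_k$ is exactly what is needed to construct an $f^{p_k}$-invariant simple arc $\gamma_k\subseteq V_k$ running from $x_k$ to $z_k$ whose forward $f$-orbit consists of pairwise disjoint arcs; the disjoint-orbit hypothesis on the $x_k$ ensures these systems do not interfere, yielding a fully $f$-invariant arc family $\Gamma=\bigcup_{k,j}f^j(\gamma_k)$ entirely contained in $F_f$.

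Second, I would build a one-parameter family of $f$-invariant Beltrami coefficients $\mu_t$ supported in a thin equivariant annular neighborhood of $\Gamma$, designed so that the moduli of these annuli diverge as $t\to\infty$ while remaining bounded uniformly in $k$ and $j$. Integrating $\mu_t$ via the Measurable Riemann Mapping Theorem and renormalizing to the monic centered form produces the quasiconformal path $\{f_t\}\subseteq\CCCC_d$ with $f_0=f$. The uniform convergence $f_t\to g$ to a geometrically finite polynomial (giving (1)), the construction of a continuous surjection $\phi:\C\to\C$ which collapses each $f$-orbit of an arc of $\Gamma$ to a single point and is injective elsewhere (giving (2), since $\Gamma\subseteq F_f$ and $\phi$ is therefore injective on $J_f$), and the identification $\phi(x_k)=\phi(z_k)$ as a parabolic periodic point of $g$ (giving (3)) are all direct outputs of \cite[Theorem 1.5]{CT2}. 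The fact that the resulting parabolic cycle has period exactly $\widetilde{p}_k$ follows from the cycle of $z_k$, together with the observation that $\phi$ identifies points in the same $f$-orbit of the arcs, so the merged cycle inherits the period of $z_k$.

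The main obstacle, which is the heart of \cite{CT2}, is proving the uniform convergence of $f_t$ and precisely analyzing the limit dynamics at the collision point $\phi(x_k)=\phi(z_k)$. This requires compactness arguments in a moduli space of marked geometrically finite polynomials together with careful asymptotic control of the degenerating conformal structures across the pinched annuli, so that the limit remains a degree $d$ polynomial rather than acquiring an ``extra'' parabolic petal or degenerating to a lower-degree map. Once this convergence is secured, the parabolic structure at the collision point is forced: an attracting cycle has been merged with a boundary repelling cycle along an invariant arc, and local power-series analysis of $g^{\widetilde{p}_k}$ at the merged point yields the multiplier $1$ with the expected multiplicity determined by $p_k/\widetilde{p}_k$ and the local degree of $f^{p_k}$ at $x_k$.
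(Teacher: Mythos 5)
The paper does not prove this proposition at all: it is stated as a direct quotation of \cite[Theorem 1.5]{CT2} (``simple pinching''), so the paper's entire argument is the citation. Your proposal takes essentially the same route --- reduce to that theorem by exhibiting the invariant arc system in the attracting basins and the degenerating Beltrami data, and defer the hard convergence and limit analysis to \cite{CT2} --- and your sketch of the underlying construction is consistent with how that reference proceeds, so there is nothing to object to beyond noting that, like the paper, you are ultimately leaning on the cited theorem for the substantive content.
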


	Conversely, given a geometrically finite map $g$, there are various quasiconformal paths (converging) to $g$.
	These pinching paths are uniquely determined by the ``plumbing combinatorics'' of $g$. The trivial one is called the \emph{simple plumping} combinatorics.
	
	\begin{proposition}[Simple plumbing \textup{\cite[Theorem 1.4]{CT2}}]\label{pro:plumbing1}
		Let $g\in\CCCC_d$ be a geometrically finite polynomial. Then there exist a quasiconformal path $\{f_t\}_{t\geq0}$ starting from a subhyperbolic map $f=f_0\in\CCCC_d$ and converging to $g$, and a continuous onto map $\phi:\C\to\C$, such that
		$\phi: J_f\to J_g$ is a topological conjugacy  between $f:J_f\to J_f$ and $g:J_g\to J_g$.
	\end{proposition}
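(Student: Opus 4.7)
The plan is to view simple plumbing as the inverse of the simple pinching in Proposition \ref{pro:pinching}: I would construct a subhyperbolic $f_0$ from $g$ by a local surgery that ``opens'' each parabolic cycle of $g$ into an attracting--repelling pair, then interpolate via a quasiconformal path and pass to the limit. The construction has three stages: a topological surgery producing a semi-rational model, its realization as a polynomial via the Thurston--Cui-Tan criterion (Theorem \ref{thm:cui-tan-Thurston}), and a quasiconformal deformation that furnishes both the path $\{f_t\}$ and the semi-conjugacy $\phi$.

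First, for each parabolic periodic point $z$ of $g$, with period $p$, multiplier $e^{2\pi i q/\ell}$ and multiplicity $\nu$, I would use the Leau--Fatou flower description (Theorem \ref{thm:flower}) to excise a small neighborhood of the parabolic orbit containing all the petals and glue in a topologically conjugate hyperbolic model. The parabolic point is ``opened'' into a new attracting periodic point $x$ (cycling through the attracting petals with period $p\ell$) together with a repelling periodic point $\hat z$ on the boundary of the new immediate basin. Outside a small annular neighborhood the surgery is the identity, producing a topological branched cover $F:\C\to\C$ of degree $d$ that is holomorphic away from the surgery loci and semi-rational in the sense of Section \ref{sec:pinching}.

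Next I would apply Theorem \ref{thm:cui-tan-Thurston} to realize $F$ as a genuine polynomial $f_0 \in \PPPP_d$. The verification is that $(F, P_F)$ has no Levy cycle and no connecting arc: by the locality of the surgery, any such obstruction would either be a global obstruction present already in the polynomial $g$ (which has none) or be localized near a parabolic orbit, where the explicit flower structure rules it out. Since the critical orbits of $g$ landing on the parabolic cycle become pre-repelling orbits of $f_0$ while the remaining critical orbits are attracted to the new attracting cycles, $f_0$ is subhyperbolic and lies in $\CCCC_d$.

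Finally, I would construct $\{f_t\}$ by the standard Bers--Maskit type quasiconformal deformation supported in fundamental annuli of the new attracting basins: along $t$, the multiplier of each new attracting cycle is driven toward the target multiplier $e^{2\pi i q/\ell}$ of the corresponding parabolic cycle of $g$, with the full return multiplier tending to $1$ as $t\to\infty$. Normalizing within $\PPPP_d$, the combinatorial rigidity built into the local surgery forces the limit of $f_t$ to be $g$. The quasiconformal conjugacies $\phi_t: J_{f_0} \to J_{f_t}$ form an equicontinuous family that converges to a continuous surjection $\phi: J_{f_0} \to J_g$ collapsing each attracting--boundary repelling pair $(x,\hat z)$ onto the corresponding parabolic point of $g$ and being injective elsewhere. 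The main obstacle is precisely this limiting analysis: controlling the Beltrami dilatations so that the deformation extends continuously up to $t=\infty$, identifying the limit polynomial with $g$ itself rather than a combinatorial twin, and proving that $\phi$ collapses exactly along the orbits of the opened parabolic cycles. This degeneration analysis is the heart of the Cui--Tan pinching--plumbing theory \cite{CT2}.
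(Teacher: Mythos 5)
The paper gives no proof of this proposition: it is imported verbatim from Cui--Tan \cite[Theorem 1.4]{CT2} (restricted to polynomials), so there is no internal argument to compare yours against. Your sketch is the right general strategy and is essentially a reconstruction of the Cui--Tan plumbing machinery --- open each parabolic cycle into an attracting cycle by local surgery on the Leau--Fatou flower, realize the resulting semi-rational model via Theorem \ref{thm:cui-tan-Thurston}, then pinch back along a quasiconformal path --- and it also parallels the surgery--realization--deformation scheme the paper does carry out in detail for Theorem \ref{thm:perturbation}.

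As a standalone proof, however, the proposal has real gaps rather than omitted routine steps. First, the no-obstruction verification is asserted by ``locality of the surgery,'' but Levy cycles and connecting arcs are global objects; when $g$ has several parabolic cycles and critical points on $J_g$, one must actually rule them out (compare the work in Lemma \ref{pro:c-equivalent}, where this is the entire content of the argument). Second, the convergence of $f_t$ to $g$ itself --- rather than to some other geometrically finite map realizing the same combinatorics --- and the existence and continuity of the limit map $\phi$ are precisely the degeneration analysis of \cite{CT2}; your text names this as ``the heart'' and defers it, which is a citation, not a proof. Third, two points specific to the statement as used here deserve attention: $\phi\colon J_f\to J_g$ must be a \emph{homeomorphism} (a topological conjugacy, not merely a semi-conjugacy as in Proposition \ref{pro:plumbing2}); this holds for simple plumbing because the opened attracting points lie in the Fatou set, so your ``collapsing'' should be understood on $\C$ and not on $J_f$. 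One must also check that the starting map $f_0$ lies in $\CCCC_d$, i.e.\ that the surgery preserves connectivity of the Julia set. None of these defects indicates a wrong approach --- the outline is faithful to how the result is actually proved --- but filling them in amounts to reproving \cite[Theorem 1.4]{CT2}, which is exactly what the paper chooses not to do.
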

	
	We describe two more particular forms of plumbing combinatorics; see Figure \ref{plumbing}.

\begin{figure}[http]
	\centering
	\begin{tikzpicture}
		\node at (0,0){ \includegraphics[width=14cm]{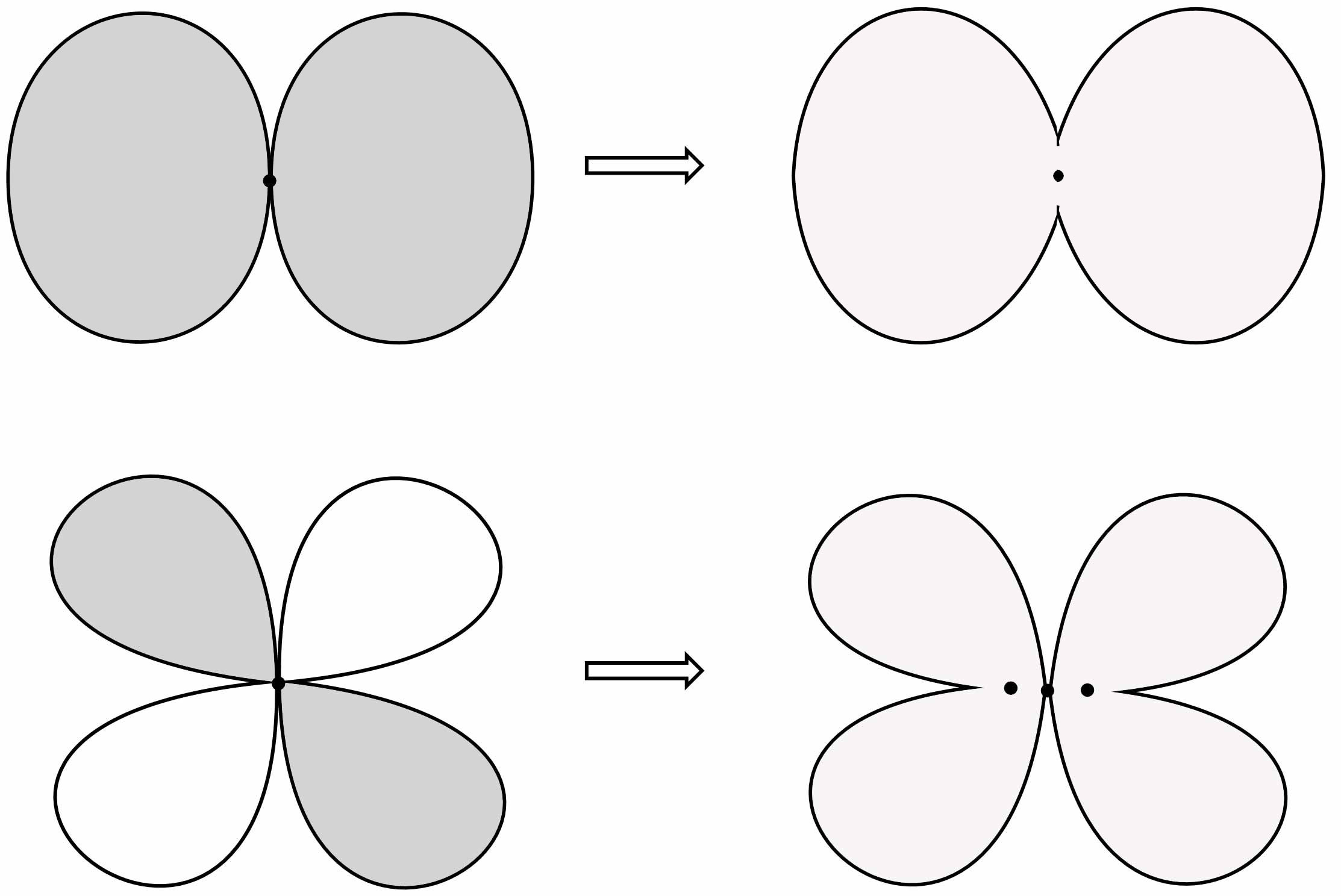}};
		\node at (-0.25, 3.5){Case (i)};
		\node at (-0.25, -1.75){Case (ii)};
	\end{tikzpicture}
	\caption{Illustration of Proposition \ref{pro:plumbing2}.}\label
	{plumbing}
\end{figure}		
	
	\begin{proposition}[Plumbing \textup{\cite[Theorem 1.6]{CT2}}]\label{pro:plumbing2}
		Let $g\in\CCCC_d$ be a geometrically finite polynomial with a unique parabolic  cycle $\mathcal{O}_{g}$ of period $p$. Suppose 
		\begin{itemize}
			\item[(i)] $\mathcal{O}_g$ attracts only one cycle of parabolic domains, of period $q>p$; or
			\item[(ii)] $\mathcal{O}_g$ attracts exactly two  cycles of parabolic domains, of period $q\geq p$.
		\end{itemize}
		Then, there exist a quasiconformal path $\{f_t\}_{t\geq0}$ starting from a subhyperbolic map $f=f_0\in \CCCC_d$ and converging to $g$,  and a continuous onto map $\phi:\C\to\C$, such that
		\begin{enumerate}
			\item $\phi: J_f\to J_g$ is a semi-conjugacy from $f:J_f\to J_f$ to $g:J_g\to J_g$;\vspace{2pt}
			\item $f$ has a unique attracting  cycle $\mathcal{O}_f$  with $\phi(\mathcal{O}_f)=\mathcal{O}_g$, and its period  is $p$ in case (i) and $q$ in case (ii).
		\end{enumerate}
	\end{proposition}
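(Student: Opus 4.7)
The plan is to build the pair $(f,\{f_t\})$ by a quasiconformal surgery supported near the parabolic cycle $\mathcal{O}_g$, and to obtain convergence $f_t\to g$ by a controlled parabolic implosion in Fatou coordinates. At each $z\in\mathcal{O}_g$ the first-return map $g^p$ fixes $z$ with multiplier a primitive $s$-th root of unity; by Theorem~\ref{thm:flower}, the attracting petals at $z$ form a single $g^p$-cycle of length $q/p$ in case (i), and split into two $g^p$-cycles of length $q/p$ in case (ii). Fatou coordinates conjugate a suitable iterate of $g^p$ on each attracting petal to the translation $w\mapsto w+1$ on a right half-plane.

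Next, I would replace this translation model by an affine contraction $w\mapsto \lambda w$ with $0<|\lambda|<1$ via the standard quasiconformal interpolation on the fundamental strip $\{0\le \re w\le 1\}$; spreading the resulting Beltrami coefficient $\mu_\lambda$ by forward invariance of the dynamics produces a $g$-invariant coefficient supported in an ever-shrinking parabolic neighborhood as $\lambda\to 1$. The Measurable Riemann Mapping Theorem integrates $\mu_\lambda$ to a quasiconformal $\varphi_\lambda$, and setting $f_\lambda:=\varphi_\lambda\circ g\circ \varphi_\lambda^{-1}$ (normalized to lie in $\PPPP_d$) gives a polynomial whose distinguished cycle $\mathcal{O}_{f_\lambda}:=\varphi_\lambda(\mathcal{O}_g)$ is attracting. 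A direct count of petal orbits shows that $\mathcal{O}_{f_\lambda}$ has period $p$ in case (i), since the $q/p$ petals at each parabolic point merge into one basin component, and period exactly $q$ in case (ii), since the two petal cycles remain distinct after surgery. Fixing some $\lambda_0$ with $|\lambda_0|$ small yields the subhyperbolic $f:=f_{\lambda_0}\in\CCCC_d$, and a radial path from $\lambda_0$ to the boundary of the unit disk provides the required quasiconformal family $\{f_t\}_{t\ge 0}$.

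The main obstacle is the convergence $f_t\to g$ together with the construction of the semi-conjugacy $\phi$. Uniform convergence on compacta of $\C\setminus\mathcal{O}_g$ follows from $\|\mu_\lambda\|_\infty\to 0$ there, but near $\mathcal{O}_g$ one must show that the attracting petals of $f_t$ Carath\'eodory-converge to the parabolic petals of $g$, with boundaries pinching onto $\mathcal{O}_g$; this is the delicate Fatou-coordinate analysis of parabolic implosion, and requires careful control of the Fatou coordinates along the family so that the inverse Fatou coordinates restrict to a consistent boundary collapse. Granting this, $\phi:J_f\to J_g$ is defined as the Hausdorff limit of $\varphi_\lambda^{-1}|_{J_{f_\lambda}}$, which is continuous, equivariant, and surjective by construction; it collapses each component of $\partial B(\mathcal{O}_f)\cap J_f$ onto the matching parabolic point of $\mathcal{O}_g$ while remaining injective elsewhere, because off the basin of $\mathcal{O}_f$ the dynamics of $f$ is uniformly expanding on the Julia set and no unwanted identifications are introduced. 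The period assertion in~(2) then reads off from the combinatorics of petal merging, which is dictated by the $g^p$-orbit structure of attracting petals distinguished by cases (i) and (ii).
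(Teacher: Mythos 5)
The paper does not prove this proposition at all: it is quoted verbatim as an external result, namely \cite[Theorem 1.6]{CT2}, and the whole hyperbolic--parabolic deformation machinery is simply imported from that reference. So there is no internal proof to compare against; what can be assessed is whether your sketch would stand on its own, and it would not.

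The fatal step is the definition $f_\lambda:=\varphi_\lambda\circ g\circ\varphi_\lambda^{-1}$ with $\varphi_\lambda$ obtained by integrating a $g$-invariant Beltrami coefficient. Any such $f_\lambda$ is globally quasiconformally (in particular topologically) conjugate to $g$, and the local topological dynamics at a periodic point is a conjugacy invariant: a parabolic point has repelling directions and cannot be sent to an attracting point by any homeomorphism conjugating the dynamics. Hence $\varphi_\lambda(\mathcal{O}_g)$ is again a parabolic cycle of $f_\lambda$, never an attracting one, and your mechanism for producing the subhyperbolic map $f$ collapses. The underlying obstruction also kills the preceding step: there is no quasiconformal interpolation on the fundamental strip $\{0\le\operatorname{Re}w\le1\}$ between $w\mapsto w+1$ and $w\mapsto\lambda w$ that is compatible with the boundary identifications, because the quotient of an attracting petal by the return map is a cylinder while the quotient of a punctured neighborhood of an attracting fixed point by $w\mapsto\lambda w$ is a torus; these are not homeomorphic, so no invariant Beltrami datum of the kind you describe exists. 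This is precisely why plumbing is a genuine cut-and-paste surgery (remove a neighborhood of $\mathcal{O}_g$, glue in model dynamics with an attracting fixed point, apply the Shishikura principle to the resulting quasiregular map) rather than a deformation of $g$ within its quasiconformal conjugacy class.

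A secondary but also essential point: the proposition asks for a quasiconformal path $\{f_t\}_{t\ge0}$ \emph{starting from} the subhyperbolic map $f$ and \emph{converging to} $g$; the maps $f_t$ are quasiconformal deformations of $f$, while the limit $g$ is not quasiconformally conjugate to any of them (the dilatation degenerates). Your construction runs in the opposite direction and never produces such a path. Finally, the convergence $f_t\to g$, together with the existence, continuity and fiber structure of the semiconjugacy $\phi:J_f\to J_g$, is the analytic core of \cite{CT2} (distortion control along nested multi-annuli in the pinching region); you correctly flag it as delicate but then assume it. As written, the proposal establishes neither the existence of $f$ nor the path nor $\phi$, so it cannot serve as a proof of the proposition; citing \cite[Theorem 1.6]{CT2}, as the paper does, is the appropriate course.
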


\subsection{Reduction along relative hyperbolic components}

For a geometrically finite map $g\in \CCCC_d$, its \emph{complexity}  is defined as
\[\kappa(g)=\#\{\text{all periodic Fatou domains of $g$}\}+\#\big(C_g\cap J_g\big).\]
It is straightforward to verify the following result.

\begin{proposition}\label{pro:decrease}
	Let $\{f_t\}_{t\geq0}$ be a quasiconformal path starting from  a subhyperbolic map $f=f_0\in\CCCC_d$ and converging to $g$. If $g$ and $\{f_t\}$ satisfy the conditions of Proposition \ref{pro:pinching}, then $\kappa(f)=\kappa(g)$. If $g$ and $\{f_t\}$ satisfy the conditions of Proposition \ref{pro:plumbing2}, then $\kappa(f)<\kappa(g)$.
\end{proposition}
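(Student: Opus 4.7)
The plan is to decompose $\kappa$ into the number of periodic Fatou domains plus $\#(C\cap J)$ and to analyze each summand separately. I first claim $\#(C_f\cap J_f)=\#(C_g\cap J_g)$ in both cases. Since the maps $f_t$ for $0\le t<\infty$ are mutually quasiconformally conjugate, their critical points correspond bijectively with multiplicity, and each such critical point remains in the Julia set or in the Fatou set for every finite $t$. As $t\to\infty$ the critical points of $f_t$ converge to those of $g$; by upper semicontinuity of the Julia set under perturbation, a critical point lying in $J_{f_t}$ for all $t<\infty$ has its limit in $J_g$, while one lying in the basin of the attracting cycle $\mathcal{O}_{f_t}$ persists into the corresponding (attracting or parabolic) basin of $g$ and hence lies in $F_g$.

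For simple pinching, it remains to count periodic Fatou domains. Each attracting cycle $\mathcal{O}_k$ of $f$ of period $p_k$ contributes $p_k$ periodic Fatou domains, namely its immediate basin components. The equivariance $\phi\circ f=g\circ\phi$ on $J_f$, together with the identification $\phi(x_k)=\phi(z_k)$ with $z_k$ of period $q_k\le p_k$, forces $\phi(\mathcal{O}_k)$ to be a parabolic cycle of period $q_k$ onto which $\mathcal{O}_k$ maps with fibres of size $\nu_k=p_k/q_k$. Consequently each parabolic point carries $\nu_k$ attracting petals, and the associated parabolic basin cycle has $q_k\cdot\nu_k=p_k$ components---exactly the contribution from $\mathcal{O}_k$ before pinching. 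Unpinched attracting cycles contribute identically on both sides, so the total periodic Fatou count is preserved and $\kappa(f)=\kappa(g)$.

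For plumbing, $\mathcal{O}_f$ is the unique attracting cycle of $f$, so its periodic Fatou domains are exactly its immediate basin components: $p$ of them in case (i) and $q$ of them in case (ii). Meanwhile $g$ has $q$ periodic parabolic Fatou domains in case (i) (a single cycle of period $q$) and $2q$ in case (ii) (two cycles of period $q$). Hence the periodic Fatou count strictly increases from $f$ to $g$ by $q-p>0$ in case (i) and by $q\ge 1$ in case (ii); combined with the critical-point identity this yields $\kappa(f)<\kappa(g)$.

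The main technical hurdle is the limit argument for critical-point locations: one must verify that no critical point can migrate from $J_{f_t}$ into $F_g$ or vice versa as $t\to\infty$. This follows from upper semicontinuity of Julia sets together with persistence of the attracting basins under quasiconformal deformation and the geometrically finite structure of $g$ (every critical orbit eventually landing in an attracting or parabolic basin)---all of which are already encoded in the conclusions of Propositions~\ref{pro:pinching} and~\ref{pro:plumbing2}.
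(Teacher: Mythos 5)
The paper itself gives no proof of this proposition (it is dismissed as ``straightforward to verify''), so there is nothing to compare against; your decomposition of $\kappa$ into the two summands, the petal count $q_k\cdot(p_k/q_k)=p_k$ for simple pinching, and the $p$-versus-$q$ (resp.\ $q$-versus-$2q$) comparison for plumbing are exactly the right computations. The problem is the step you yourself identify as the main technical hurdle: it is resolved by an incorrect principle. The Julia set depends \emph{lower} semicontinuously on the map; it is the \emph{filled} Julia set that is upper semicontinuous. Hence a limit of points $c_t\in J_{f_t}$ is only guaranteed to lie in $K_g$, and in general such a limit can land in a bounded Fatou component of $g$ (parabolic implosion gives sequences with $J_{f_n}\to K_g\neq J_g$ in the Hausdorff metric). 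So ``upper semicontinuity of the Julia set'' does not yield $\#(C_f\cap J_f)=\#(C_g\cap J_g)$. The correct route is to use the limiting maps supplied by Propositions \ref{pro:pinching} and \ref{pro:plumbing2} themselves: writing $f_t=\varphi_t\circ f\circ\varphi_t^{-1}$ with $\varphi_t\to\phi$, one gets $C_g=\phi(C_f)$; since $\phi(J_f)=J_g$, since $\phi$ carries the attracting basins into Fatou components of $g$ except along the pinched arcs (which in the construction of \cite{CT2} avoid the critical orbits, and which in any case cannot absorb a critical point, as a parabolic point has multiplier of modulus one), and since $\phi$ is injective on $J_f$ in the pinching case, no critical point can migrate between $J$ and $F$ in the limit and none can merge across that divide.

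Two smaller points. First, in the plumbing case you read ``unique attracting cycle'' literally and conclude that all periodic bounded Fatou domains of $f$ are immediate basin components of $\mathcal{O}_f$; but in the way Proposition \ref{pro:plumbing2} is actually invoked in Lemma \ref{lem:reduce}, the map $g$ may retain further attracting cycles (only one cycle of $f$ is pinched), so you should add—as you correctly do for pinching—that periodic Fatou domains away from $\mathcal{O}_f$ and $\mathcal{O}_g$ correspond bijectively and cancel. Second, the identity ``number of parabolic domains $=q_k\nu_k$'' uses that distinct attracting petals at a parabolic point lie in distinct Fatou components; this is true (orbits in a periodic parabolic component converge to the boundary fixed point through a single petal) but deserves a sentence. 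With these repairs the argument is complete.
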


For a subhyperbolic map $f\in\CCCC_d$,
$\HHHH_{f}$ denotes the relative hyperbolic component containing $f$.

\begin{lemma}\label{lem:reduce}
	Let  $f\in\CCCC_d$ be a non-primitive subhyperbolic map. Then there exists a subhyperbolic map $ f_1\in\CCCC_d$ satisfying
	$\ov{\HHHH_{f}}\cap\ov{\HHHH_{f_1}}\not=\emptyset$ and $\kappa(f_1)<\kappa(f)$. Furthermore,
	for any $(\alpha,\beta)\in\lambda_\Q(f)$ such that $\pi_f(\alpha)$ avoid  maximal Fatou chains of $f$,  we have that $(\alpha,\beta)\in\lambda_\Q(f_1)$ and $\pi_{f_1}(\alpha)$  avoids maximal Fatou chains of $f_1$.
	In particular, if $f$ is hyperbolic, then $f_1$ is also hyperbolic.
\end{lemma}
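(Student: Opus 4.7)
My plan is to combine the hyperbolic--parabolic deformation tools of Section~\ref{sec:appendix} in a two-step ``pinch then plumb'' procedure: first collapse part of the attracting dynamics of $f$ to a parabolic limit $g$, then unfold $g$ along a different combinatorial pattern to obtain a subhyperbolic $f_1$ with strictly fewer periodic Fatou domains.

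I would begin by locating the pinching target. Since $f$ is non-primitive, there exist two bounded periodic Fatou domains $U_1,U_2$ with $\overline{U_1}\cap\overline{U_2}\neq\emptyset$; by iterating, the intersection contains a periodic point $z$. After possibly switching the pair or the representative in its orbit, I would arrange one of two configurations: either (a) $U_1,U_2$ lie in a common attracting cycle of period $q$ and $z$ has period $p<q$, or (b) $U_1,U_2$ lie in two distinct attracting cycles of common period $q$ with $z$ of period $p\le q$. I would then apply Proposition~\ref{pro:pinching} to pinch the relevant attracting cycle(s) along the orbit of $z$. In configuration~(a) this yields a geometrically finite $g$ with a unique parabolic cycle of period $p$ attracting one cycle of parabolic domains of period $q>p$, fitting case~(i) of Proposition~\ref{pro:plumbing2}; in configuration~(b), the parabolic cycle attracts two cycles of parabolic domains of common period $q\ge p$, fitting case~(ii).

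In the second step I would apply Proposition~\ref{pro:plumbing2} to $g$, obtaining a subhyperbolic $f_1\in\CCCC_d$ together with a quasiconformal path from $f_1$ to $g$. Since this path lies in $\HHHH_{f_1}$ we have $g\in\overline{\HHHH_{f_1}}$; the pinching path lies in $\HHHH_f$, so $g\in\overline{\HHHH_f}$; hence $g\in\overline{\HHHH_f}\cap\overline{\HHHH_{f_1}}$. Proposition~\ref{pro:decrease} gives $\kappa(f)=\kappa(g)$ from the pinching and $\kappa(f_1)<\kappa(g)$ from the plumbing, yielding $\kappa(f_1)<\kappa(f)$. For the hyperbolicity statement, the pinching conjugacy $\phi_1:J_f\to J_g$ ensures that $C_g\cap J_g=\emptyset$ whenever $C_f\cap J_f=\emptyset$, and the plumbing cannot introduce critical points of $f_1$ in $J_{f_1}$ since it merely replaces the parabolic cycle of $g$ by an attracting cycle of $f_1$. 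Hence $f_1$ is hyperbolic when $f$ is.

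For the lamination property, take $(\alpha,\beta)\in\lambda_\Q(f)$ with $\pi_f(\alpha)$ avoiding all maximal Fatou chains of $f$. The conjugacy $\phi_1$ sends $\pi_f(\alpha)$ to $\pi_g(\alpha)$; since the Fatou-chain structure is determined by the Julia-set combinatorics, $\pi_g(\alpha)$ avoids the maximal Fatou chains of $g$, and in particular lies outside the grand orbit of the new parabolic cycle $\OOO_g$ (which is contained in the boundaries of the newly-formed parabolic Fatou chains). The plumbing semi-conjugacy $\phi_2:J_{f_1}\to J_g$ is injective on preimages outside this grand orbit, so $\phi_2^{-1}(\pi_g(\alpha))=\{\pi_{f_1}(\alpha)\}=\{\pi_{f_1}(\beta)\}$, giving $(\alpha,\beta)\in\lambda_\Q(f_1)$ and $\pi_{f_1}(\alpha)$ still avoiding the maximal Fatou chains of $f_1$. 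The main obstacle will be the combinatorial case analysis in Step~1: ensuring that every non-primitive subhyperbolic $f$ fits configuration~(a) or~(b). The residual degenerate case---two Fatou domains of the same cycle touching only at a periodic point whose period equals that of the cycle, with no other pair meeting the configuration---requires a careful orbit-theoretic argument, either to drop the period of the contact point by choosing a different pair of Fatou domains in the cycle, or to replace the pair by one in a distinct cycle whenever $f$ has multiple attracting cycles. Carrying out this analysis rigorously is where the real work will lie.
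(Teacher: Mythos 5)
Your pinch--then--plumb strategy reproduces the paper's treatment of two of its three cases, but your very first step contains a gap that a whole case falls through. You assert that non-primitivity yields two bounded \emph{periodic} Fatou domains whose closures meet, and that the intersection then contains a \emph{periodic} point $z$. This is false in general: the paper's proof begins by observing that if $\overline{U}\cap\overline{V}\ni y$ for bounded Fatou domains $U,V$, then \emph{either} $y$ is (pre-)critical \emph{or} $f^k(U)\neq f^k(V)$ for all $k\ge 1$. In the first alternative the two domains may merge under iteration ($f^k(U)=f^k(V)$ for some $k$), and the only source of non-primitivity is a critical point $c\in J_f$ lying on the boundary of a bounded Fatou domain; no periodic point need lie on the boundary of two distinct Fatou domains, so neither of your configurations (a) nor (b) exists and Propositions~\ref{pro:pinching} and~\ref{pro:plumbing2} cannot be invoked. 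The paper handles this third case with a completely different tool: the dynamical perturbation theorem (Theorem~\ref{thm:perturbation}, Statement~(5)) applied to a last critical point $c$ on the boundary of a bounded Fatou domain, which produces $f_1$ with $f\in\partial\HHHH_{f_1}$, strictly decreases $\#(C_f\cap J_f)$ (hence $\kappa$), and whose semiconjugacy $\phi$ controls the fibers well enough to preserve the lamination condition via $\pi_f^{-1}(z)=\pi_{f_1}^{-1}(\phi^{-1}(z))$ for points off the grand orbit of $c$. Without this case your induction target $\kappa(f_1)<\kappa(f)$ cannot be reached for a general subhyperbolic $f$ (it is only vacuously avoided when $f$ is hyperbolic, since then $C_f\cap J_f=\emptyset$).

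Two smaller points. First, the ``residual degenerate case'' you defer---two domains of one cycle touching at a periodic point whose period equals the cycle's period---does not actually occur: if a periodic point of period $p$ lies on the boundary of at least two domains of a single cycle of period $q$, then $p$ divides $q$ and $q/p\ge 2$, so $q>p$ and case~(i) of Proposition~\ref{pro:plumbing2} always applies; conversely, distinct cycles give case~(ii) with $q\ge p$. You should resolve this rather than flag it. Second, your argument for the lamination transfer through the pinching map is essentially the paper's (homeomorphism on Julia sets preserves the chain structure, then Lemma~\ref{lem:stable}(1) recovers the landing relation, and the plumbing map is injective off the maximal Fatou chains), so that part is fine once the case analysis is completed.
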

\begin{proof}
	Since $f$ is non-primitive, there are two bounded Fatou domains $U$ and $V$ of $f$ such that their boundaries intersect at some point $y$. Then either $y$ is (pre-)critical, or $f^k(U) \neq f^k(V)$ for all $k \geq 1$. Thus at least one of the following holds:
	\begin{enumerate}
		\item A periodic point $z$ lies on the boundaries of at least two bounded Fatou domains, and all these Fatou domains belong to the same cycle;
		
		\item The boundaries of two bounded Fatou domains $U_1$ and $U_2$ from distinct cycles intersect at a periodic point $z$;
		
		\item A critical point $c$ lies on the boundary of a bounded Fatou domain.
	\end{enumerate}
	We will prove the lemma by considering each of these cases separately.\vspace{3pt}
	
	Case (1). Let $U$ be a Fatou domain of $f$ containing an attracting periodic point $x$ such that $z \in \partial U$. By applying Proposition
	\ref{pro:pinching} to $f$ and $\{x,z\}$, we obtain a geometrically finite map $g\in\partial \HHHH_f$ with a unique parabolic cycle $\OOO_g$, and a continuous  map $\phi$ on $\C$ such that $\phi:J_f\to J_g$ is a homeomorphism and $\phi(x)=\phi(z)\in\OOO_g$.
	The map $\phi$ sends maximal Fatou chains of $f$ to those of $g$. Consequently, $\pi_g(\alpha)$ and $\pi_g(\beta)$ avoid the maximal Fatou chains of $g$, and are eventually repelling. Thus, Lemma \ref{lem:stable}\,(1) implies $(\alpha,\beta)\in\lambda_\Q(g)$.

	Note that $g$ satisfies the hypothesis of Proposition~\ref{pro:plumbing2}\,(i).
	We thus obtain a subhyperbolic map $f_1 \in\CCCC_d$ with a marked attracting cycle $\mathcal{O}_{f_1}$ and a continuous map $\phi_1: \mathbb{C} \to \mathbb{C}$, such that  $\phi_1(J_{f_1})=J_g,\,\phi_1(\OOO_{f_1})=\OOO_g$ and $g\in\partial\HHHH_{f_1}$.

	We conclude from Proposition \ref{pro:decrease} that $\kappa(f_1)<\kappa(g)=\kappa(f)$. Moreover, the map $\phi_1$ lifts the maximal Fatou chains of $g$ to those of $f_1$ and is injective elsewhere. Since $\phi_1(\pi_{f_1}(\alpha))=\pi_g(\alpha)$ and $\phi_1(\pi_{f_1}(\beta))=\pi_g(\beta)$, and since $\pi_g(\alpha)=\pi_g(\beta)$ avoids the maximal Fatou chains of $g$, it follows that $\pi_{f_1}(\alpha)=\pi_{f_1}(\beta)$, and this point is disjoint from the maximal Fatou chains of $f_1$.
	
	If $f$ is hyperbolic, then $g$ has no critical points in its Julia set, which implies that $f_1$ is hyperbolic. This completes the proof of the lemma in Case (1).
	\vspace{2pt}
	
	Case (2). Let $x_1$ and $x_2$ be the attracting points in $U_1$ and $U_2$, respectively. We apply Proposition \ref{pro:pinching} to $f$ and the pairs $\{x_1,x_2\}, \{z_1,z_2\}$ with $z_1=z_2=z$, obtaining a geometrically finite map $g\in\partial\HHHH_f$ with a unique parabolic cycle. The rest argument parallels Case (1), except that $g$ now satisfies the hypothesis of Proposition \ref{pro:plumbing2}\,(ii). We omit the details.
	\vspace{2pt}
	
	Case (3). Without loss of generality, assume that $c$ is a last critical point of $f$. Applying Theorem \ref{thm:perturbation}\,(5) to $(f,c)$ yields a subhyperbolic map $f_1\in\CCCC_d$ with $f\in\partial\HHHH_{f_1}$ and a semiconjugacy $\phi$ from $f_1:J_{f_1}\to J_{f_1}$ to $f:J_f\to J_f$. The properties of $\phi$ implies $\#(C_f\cap J_f)>\#(C_{f_1}\cap J_{f_1})$, hence $\kappa(f)>\kappa(f_1)$. Moreover, letting $z:=\pi_f(\alpha)$, the point $z_1=\phi^{-1}(z)$ is disjoint from the maximal Fatou chains of $f_1$ and satisfies $\pi_f^{-1}(z)=\pi_{f_1}^{-1}(z_1)$. Consequently, $(\alpha,\beta)\in\lambda_\Q(f_1)$.	
%
%
\end{proof}

\begin{proposition}\label{pro:reduction}
	Let $f\in\CCCC_d$ be a subhyperbolic polynomial. Then, there exists a unique primitive postcritically finite polynomial $f_0$ such that $\HHHH_f$ and $\HHHH_{f_0}$ are contained in a finite chain of relative hyperbolic components. Moreover, we have
	\begin{enumerate}
		\item $\lambda_\Q(f_0)\subseteq \lambda_\Q(f)$, and for any infinite gap $\Omega$ of $L(f_0)$ (if exists), the set $K_{f,\Omega}$ defined in \eqref{eq:23}  is a maximal Fatou chain of $f$;
		\item if $(\alpha,\beta)\in\lambda_\Q(f)$ and $\pi_f(\alpha)$ avoids  maximal Fatou chains, then  $(\alpha,\beta)\in\lambda_\Q(f_0)$;
		\item if each critical point of $f$ lies in a maximal Fatou chain, then $f_0$ is hyperbolic;
		\item if $f$ is hyperbolic, then $\HHHH_f$ and $\HHHH_{f_0}$ lie in a finite chain of hyperbolic components.
	\end{enumerate}
\end{proposition}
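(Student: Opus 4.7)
The plan is to proceed by induction on the complexity $\kappa(f)$, with Lemma~\ref{lem:reduce} supplying the inductive step. If $f$ is primitive, I would take $f_0$ to be the unique postcritically finite polynomial in $\HHHH_f$ provided by Proposition~\ref{pro:same1}; it inherits primitivity because the incidence combinatorics of bounded Fatou domain closures is invariant within a relative hyperbolic component. Otherwise Lemma~\ref{lem:reduce} produces $f_1\in\CCCC_d$ with $\ov{\HHHH_f}\cap\ov{\HHHH_{f_1}}\neq\emptyset$ and $\kappa(f_1)<\kappa(f)$, and applying the inductive hypothesis to $f_1$, together with prepending $\HHHH_f$, yields the desired $f_0$ and a finite chain linking $\HHHH_f$ to $\HHHH_{f_0}$. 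For uniqueness, any two candidates $f_0,f_0'$ are both linked to $\HHHH_f$ and hence to each other, so applying Proposition~\ref{pro:transitive1} twice gives $\lambda_\Q(f_0)=\lambda_\Q(f_0')$, and the classification of postcritically finite polynomials by their rational laminations cited in Proposition~\ref{pro:same1} forces $f_0=f_0'$.

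Property~(2) follows by iterating the ``furthermore'' clause of Lemma~\ref{lem:reduce} along the reduction chain $f=g_0,g_1,\dots,g_k$ with $g_k$ primitive: at each step a pair $(\alpha,\beta)\in\lambda_\Q(g_i)$ whose landing avoids the maximal Fatou chains of $g_i$ propagates to $\lambda_\Q(g_{i+1})$ with the same avoidance, and since $g_k\in\HHHH_{f_0}$, Proposition~\ref{pro:same1} gives $\lambda_\Q(g_k)=\lambda_\Q(f_0)$. The inclusion $\lambda_\Q(f_0)\subseteq\lambda_\Q(f)$ in part~(1) is then obtained by letting $g$ be the postcritically finite representative of $\HHHH_f$ (so $\lambda_\Q(g)=\lambda_\Q(f)$) and applying Proposition~\ref{pro:transitive1} to transfer the trivial $\lambda_\Q(f_0)\subseteq\lambda_\Q(f_0)$ into $\lambda_\Q(f_0)\subseteq\lambda_\Q(g)=\lambda_\Q(f)$.

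The main obstacle is the remaining half of~(1): showing that $K_{f,\Omega}$ is a maximal Fatou chain of $f$ for every infinite gap $\Omega$ of $L(f_0)$. Proposition~\ref{pro:ren} combined with $\lambda_\Q(f_0)\subseteq\lambda_\Q(f)$ gives that $K_{f,\Omega}$ is a full continuum with boundary in $J_f$ and, by iteration, a full $f^p$-stable continuum when $\Omega$ has $\sigma_d$-period $p$. Via the equivalence $(4)\Leftrightarrow(1)$ of Proposition~\ref{pro:cluster}, it suffices to verify that $K_{f,\Omega}$ has only finitely many periodic cut points. By the convexity claim built into the proof of Proposition~\ref{pro:cluster}, every cut point of $K_{f,\Omega}$ is also a cut point of $K_f$, hence of $J_f$; periodic cut points of $J_f$ lie in the finite set $T_f\cap J_f$ by Proposition~\ref{pro:iterate}, so the required finiteness follows. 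To promote the resulting inclusion $K_{f,\Omega}\subseteq E$ (where $E$ is a maximal Fatou chain) to equality, I would induct along the reduction chain: at $f_0$, $K_{f_0,\Omega}$ is the closure of the corresponding periodic Fatou domain, which is itself a maximal Fatou chain by primitivity; at each step up the chain, $K_{g_i,\Omega}$ and the maximal Fatou chain of $g_i$ corresponding to $\Omega$ grow in tandem under the inverse pinching/plumbing/perturbation operations.

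Finally, properties~(3) and~(4) emerge from the induction almost for free. Property~(4) uses the hyperbolicity-preserving clause of Lemma~\ref{lem:reduce}, so every intermediate $g_i$ is hyperbolic and the chain is made of hyperbolic components. For~(3), the hypothesis ``every critical point lies in a maximal Fatou chain'' is preserved at each reduction step thanks to Theorem~\ref{thm:perturbation}\,(1) and (2) and the combinatorics of Cases~(1) and (2) of Lemma~\ref{lem:reduce}. At the primitive endpoint $f_0$, a critical point $c$ on the boundary of a bounded Fatou domain $U$ would, via the local $\deg(f_0,c)$-sheet structure at $c$ combined with the Jordan-disk property of $\overline U$ (valid since $K_{f_0}$ is locally connected), force $c$ onto the boundary of a distinct bounded Fatou domain, contradicting primitivity. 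Hence every critical point of $f_0$ lies in a Fatou interior and $f_0$ is hyperbolic.
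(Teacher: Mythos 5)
Most of your outline tracks the paper's proof: the reduction by successive applications of Lemma~\ref{lem:reduce}, uniqueness via Propositions~\ref{pro:transitive1} and~\ref{pro:same1}, property~(2) by iterating the ``furthermore'' clause, the inclusion $\lambda_\Q(f_0)\subseteq\lambda_\Q(f)$ via Proposition~\ref{pro:transitive1}, and properties~(3)--(4).

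However, your argument for the crucial remaining half of~(1) --- that $K_{f,\Omega}$ is a maximal Fatou chain --- contains a genuine gap. You assert that the periodic cut points of $J_f$ lie in ``the finite set $T_f\cap J_f$''. But $T_f\cap J_f$ is not finite in general: Proposition~\ref{pro:iterate} only says that cut points are \emph{eventually iterated into} $T_f$, and by Theorem~LP the set $J_f\cap T_f$ is typically uncountable (it is countable precisely when $f$ lies in the extended main molecule). Already for $z^2-2$ the Hubbard tree equals $J_f=[-2,2]$ and contains infinitely many periodic cut points. If your finiteness claim were correct, condition~(4) of Proposition~\ref{pro:cluster} would hold automatically for \emph{every} full stable continuum with boundary in $J_f$, trivializing that proposition; so the step cannot be repaired within $f$ alone. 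The paper's argument is of a different nature and uses the chain of relative hyperbolic components in an essential way: one fixes intermediate maps $g_i\in\overline{\HHHH_{f_i}}\cap\overline{\HHHH_{f_{i+1}}}$ and a uniform $N$ such that every rational angle of period greater than $N$ lands, for each $g_i$, at a pre-repelling point whose orbit avoids critical points. If $K_{f,\Omega}$ were not a maximal Fatou chain, Propositions~\ref{pro:ren} and~\ref{pro:cluster} would produce a periodic cut point of period greater than $N$, hence a ray pair with angles $\alpha,\beta$ and $\overline{\alpha\beta}\subseteq\Omega$; successive applications of Lemma~\ref{lem:stable}\,(1) along the chain transfer the identification $(\alpha,\beta)$ down to $f_0$, yielding a leaf inside the gap $\Omega$ of $L(f_0)$, a contradiction. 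This transfer mechanism is what your proposal is missing. Relatedly, your plan to ``promote the inclusion $K_{f,\Omega}\subseteq E$ to equality by inducting along the chain'' is too vague to assess; the paper avoids this issue because, by Proposition~\ref{pro:ren}, failing to be a maximal Fatou chain already forces $K_{f,\Omega}$ to fail containment in any maximal Fatou chain.
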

\begin{proof}
	By successive applying Lemma \ref{lem:reduce}, we obtain a sequence of subhyperbolic maps $f=f_m,\ldots, f_0\in\CCCC_d$ such that $\overline{\HHHH_{f_{i+1}}}\cap \overline{\HHHH_{f_i}}\not=\emptyset$ for $i=m-1,\ldots,0$, the polynomial $f_0$ is primitive and postcritically finite, and that Statements (2) and (4) of the proposition are satisfied.

	By Proposition \ref{pro:transitive1}, we have $\lambda_\Q(f_0) \subseteq \lambda_\Q(f)$. To prove (1), it suffices to verify the property for every infinite periodic gap $\Omega$ of $L(f_0)$. Proposition \ref{pro:ren} implies $K_{f,\Omega}$ is a full stable continuum.
		
	Fix  $g_i\in \overline{\HHHH_{f_i}}\cap\overline{\HHHH_{f_{i+1}}}$ for every $i=0,\ldots,m-1$. Then there exists $N>0$ such that for any $\theta\in\Q/\Z$ with period larger than $N$, the point $\pi_{g_i}(\theta)$ is pre-repelling and its orbit avoids the critical points of $g_i$, for every $i=0,\ldots,m-1$.

	Assume that $K_{f,\Omega}$ is not a maximal Fatou chain. Then it is also not contained in a maximal Fatou chain by Proposition \ref{pro:ren}.  Due to Proposition \ref{pro:cluster}, there exists a periodic cut point $z$ of $K_{f,\Omega}$ with period larger than $N$. Choose angles $\alpha,\beta\in\partial\Omega\cap\partial\D$ such that $R_f(\alpha)$ and $R_f(\beta)$ land at $z$ and $\overline{\alpha\beta}\subseteq\Omega$.  Successively applying Lemma \ref{lem:stable}\,(1) to each triple $\{\HHHH_{f_{i+1}},g_i,\HHHH_{f_i}\}$ ($i=m-1,\ldots,0$) shows that
	$R_{f_0}(\alpha)$ and $R_{f_0}(\beta)$ also land at a common point, contradicting that $\Omega$ is a gap of $L(f_0)$. This completes the proof of Statement (1).
	
	Suppose every critical point of $f$ lies in a maximal Fatou chain. Then by Statement (1) and a degree argument,  each critical point of $f_0$ lies in $K_{f_0,\Omega}$ for some infinite gap $\Omega$ of $L(f_0)$, which is just the closure of a bounded Fatou domain of $f_0$. Since $f_0$ is primitive, it follows that  $f_0$ is hyperbolic. This completes the proof of Statement (3).
	
	The uniqueness of $f_0$ follows from  Propositions \ref{pro:same1} and \ref{pro:transitive1}.
\end{proof}

\subsection{Geometrically finite maps in extended molecules}
This section is devoted to proving Theorem \ref{thm:gf}.
From the definition of extended molecules, we immediately obtain the following.
	
	\begin{proposition}\label{pro:def}
	Given an extended molecule $\MMMM_+=\MMMM_+(\HHHH)$, any bounded hyperbolic component is either disjoint from $\MMMM_+$ or connected to $\HHHH$ by a finite chain of relative hyperbolic components. As a consequence, we have $\MMMM_+=\MMMM_+(\HHHH')$ for any hyperbolic component $\HHHH'\subseteq\MMMM_+$.
	\end{proposition}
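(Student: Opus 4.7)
The plan is to deduce both assertions from a single structural fact: every bounded hyperbolic component $\HHHH'$ is itself a relative hyperbolic component, namely $\HHHH'=\HHHH_{f'}$, where $f'$ is the unique postcritically finite polynomial in $\HHHH'$ furnished by McMullen's theorem recalled in the introduction. Once this identification is available, the first statement will follow from openness of hyperbolic components plus a one-step extension of the defining chain of $\MMMM_+$, and the second is a formal consequence of the transitivity of the relation ``connected by a finite chain of relative hyperbolic components''.

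The first step is to verify $\HHHH'=\HHHH_{f'}$. For $\HHHH'\subseteq\HHHH_{f'}$, recall that any two hyperbolic polynomials in the same bounded hyperbolic component are quasiconformally conjugate on all of $\C$ (a standard holomorphic-motion argument), so in particular quasiconformally conjugate near the Julia set; since $\HHHH'$ is connected and contains $f'$, it sits inside the connected component containing $f'$ of the set of polynomials qc-conjugate to $f'$ near the Julia set, which by definition is $\HHHH_{f'}$. For $\HHHH_{f'}\subseteq\HHHH'$, note that being qc-conjugate near the Julia set to the hyperbolic map $f'$ forces a polynomial to be hyperbolic (expansion on $J$ and absence of critical points on $J$ are both preserved); thus $\HHHH_{f'}$ is a connected set of hyperbolic polynomials containing $f'$, and hence is contained in the hyperbolic component $\HHHH'$.

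Next, suppose $\HHHH'$ is a bounded hyperbolic component with $\HHHH'\cap\MMMM_+\neq\emptyset$. Since $\MMMM_+$ is by definition the closure of a union $\bigcup_i \HHHH_{f_i}$ with each $\HHHH_{f_i}$ chain-connected to $\HHHH$, and $\HHHH'$ is open, there must exist an index $i$ with $\HHHH'\cap\HHHH_{f_i}\neq\emptyset$. Pick $g$ in this intersection. Every element of $\HHHH'$ is qc-conjugate to $g$ globally, and $g$ is qc-conjugate to $f_i$ near the Julia set, so every element of $\HHHH'$ is qc-conjugate to $f_i$ near the Julia set. Connectedness of $\HHHH'$ then places it inside the connected component of such polynomials that contains $f_i$, namely $\HHHH_{f_i}$. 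Hence $\overline{\HHHH'}\subseteq\overline{\HHHH_{f_i}}$, and prepending $\HHHH'$ to the given chain from $\HHHH_{f_i}$ to $\HHHH$ yields a finite chain of relative hyperbolic components from $\HHHH'$ to $\HHHH$.

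Finally, for the consequence, assume $\HHHH'\subseteq\MMMM_+(\HHHH)$. The previous paragraph produces a chain from $\HHHH'$ to $\HHHH$. Given any relative hyperbolic component $\HHHH_g$ contributing to the definition of $\MMMM_+(\HHHH)$, concatenating its chain to $\HHHH$ with the reversed chain from $\HHHH$ to $\HHHH'$ yields a chain from $\HHHH_g$ to $\HHHH'$, so $\HHHH_g\subseteq\MMMM_+(\HHHH')$; taking closures gives $\MMMM_+(\HHHH)\subseteq\MMMM_+(\HHHH')$, and the reverse inclusion is identical by symmetry. The only non-formal input throughout is the qc-rigidity identification $\HHHH'=\HHHH_{f'}$; everything else is point-set topology and manipulation of chains, and this is where I expect any subtleties to lie.
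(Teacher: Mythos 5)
The paper offers no argument for this proposition at all --- it is introduced with ``From the definition of extended molecules, we immediately obtain the following'' --- so your write-up is best read as filling in details the authors regard as formal. Your overall structure is sound: the identification of a bounded hyperbolic component with the relative hyperbolic component of its postcritically finite center, the openness argument showing $\HHHH'$ must meet one of the sets $\HHHH_{f_i}$ (not merely their closure), and the concatenation of chains for the second assertion are all correct, and the last two steps really are just point-set topology. Note also that the identification $\HHHH'=\HHHH_{f'}$ is already asserted in the paper's introduction (``$\HHHH_f$ is a hyperbolic component if $f$ is hyperbolic''), so you could simply have cited it.

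Two inaccuracies in your supporting claims deserve correction, though neither is fatal. First, it is \emph{not} true that any two polynomials in the same bounded hyperbolic component are quasiconformally conjugate on all of $\C$: the postcritically finite center has superattracting cycles, at which the map is locally $k$-to-$1$ with $k\ge 2$, while a nearby map in the component has merely attracting (locally injective) cycles, so no global topological conjugacy exists. What is true, and is all you actually use, is that maps in the same hyperbolic component are quasiconformally conjugate \emph{on a neighborhood of the Julia set} (the standard holomorphic motion/$J$-stability argument); you should state only that. Second, your justification that membership in $\HHHH_{f'}$ forces hyperbolicity (``expansion on $J$ \ldots\ is preserved'') is too quick: expansion is a metric property not preserved by quasiconformal conjugacy, and a complete argument must also exclude parabolic cycles, Siegel disks and Cremer points, which requires identifying these topologically on the Julia set. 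Since the paper takes this fact for granted, citing it is preferable to the sketch you give.
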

	
\begin{proof}[Proof of Theorem \ref{thm:gf}]
	Let $\MMMM_+$ be an extended molecule. From Propositions \ref{pro:def} and \ref{pro:reduction}, it follows that $\MMMM_+$ contains a unique primitive hyperbolic component.
	
	Let $f_0$ be the unique postcritically finite map in this primitive hyperbolic component.
	By definition, for any $f\in\MMMM_+$, there exists a sequence $\{f_n\}_{n\geq1}$ of subhyperbolic maps converging to $f$, each connected to $f_0$ by a finite chain of relative hyperbolic components. By Proposition \ref{pro:transitive1}, we have $\lambda_\Q(f_0)\subseteq \lambda_\Q(f_n)$ for all $n$, and Proposition  \ref{pro:lami-convergence}\,(1) then yields $\lambda_\Q(f_0)\subseteq \lambda_\Q(f)$. \vspace{4pt}
	
	We start to check the  properties for  geometrically finite polynomials lying in $\MMMM_+$. \vspace{5pt}
	
	{\bf Claim 1}. \emph{Let $f\in\CCCC_d$ be a (sub)hyperbolic map with properties (1) and (2) in Theorem \ref{thm:gf}. Then $f$ and $f_0$ lie in  a finite chain of (relative) hyperbolic components.}
	
	\begin{proof}[Proof of Claim 1]
		
		By Proposition \ref{pro:reduction}, there exists a unique primitive hyperbolic postcritically finite polynomial $g_0$, such that $\HHHH_f$ and $\HHHH_{g_0}$ are connected by a finite chain of relative hyperbolic components with $\lambda_\Q(g_0)\subseteq\lambda_\Q(f)$, and such that
		\vspace{5pt}
		
		\qquad \quad \qquad\qquad\emph{the continuum $K_{f,\Omega(g_0)}$ is  a maximal Fatou chain of $f$ \quad\qquad\qquad\qquad$(\ast)$}
		\vspace{5pt}
		
		\noindent for every infinite gap $\Omega(g_0)$ of $L(g_0)$. In particular, if $f$ is hyperbolic, every relative hyperbolic component in this chain is a hyperbolic component.
		
		Note that Proposition \ref{pro:transitive1} implies $\lambda_\Q(f_0)\subseteq \lambda_\Q(g_0)$. Together with property (2) of Theorem \ref{thm:gf} and Statement $(\ast)$, this gives $L(f_0)=L(g_0)$. Hence $f_0=g_0$ by Corollary \ref{coro:same} and Proposition \ref{pro:same1}, proving the claim.
	\end{proof}

	Assume first that $f\in\CCCC_d$ is a geometrically finite map satisfying properties (1) and (2) of Theorem \ref{thm:gf}. By Proposition \ref{pro:plumbing1}, there exists a subhyperbolic polynomial $g$ satisfying (1) and (2) such that  $f\in\overline{\HHHH_g}$. The earlier claim then gives $f\in\MMMM_+$.
	
	Now suppose that $f$ is a geometrically finite polynomial in $\MMMM_+$. We showed that $\lambda_\Q(f_0)\subseteq\lambda_\Q(f)$.
	Assume to the contrary that $L(f_0)$ has an infinite periodic gap $\Omega$ of period $p$ such that $K_{f,\Omega}$ is not a maximal Fatou chain of $f$. By Proposition \ref{pro:ren}, $K_{f,\Omega}$ is a full stable continuum not lying in any maximal Fatou chain. Then Proposition \ref{pro:cluster} yields a regulated arc $\gamma \subseteq K_{f,\Omega}$ that is not contained in any maximal Fatou chain of $f$.

Let $\ell$ and $\ell'$ be two leaves of $L(f)$ with $\pi_f(\ell)=\g(0)$ and $\pi_f(\ell')=\g(1)$. 	
	Applying Proposition \ref{lem:lamination-version} to $\ell,\ell'$, we obtain two leaves $\overline{\theta^-_x\theta_x^+}$ and $\overline{\theta_y^-\theta_y^+}$ of $L(f)$ within $\Omega$, such that  $\pi_f(\theta^\pm_x)=x$ and $\pi_f(\theta^\pm_y)=y$ are distinct points in $K_{f,\Omega}$, and the following properties hold:
	\begin{itemize}
		\item [(a)] the points $x$ and $y$ are pre-repelling and their orbits avoid the critical points of $f$;
		\item [(b)] the component $W$ of $\C\setminus\big(R_{f}(\theta_x^\pm)\cup\{x\}\cup\{y\}\cup R_{f}(\theta_y^\pm)\big)$ whose boundary contains $x$ and $y$
		is disjoint from $P_f$;
		\item [(c)] for $i=1,2$, there exist $m_i>0$ and a component $W^{(i)}$ of $f^{-m_i}(W)$ fullfilling that
		\begin{itemize}
			\item[-]  $W^{(1)}$ and $W^{(2)}$ have disjoint closures;
			\item[-] $W^{(i)}\Subset W$ and $W^{(i)}$ separates the two components of $\partial W$;
			\item[-] $f^{m_i}:W^{(i)}\to W$ is a conformal homeomorphism.
		\end{itemize}
	\end{itemize}
	
	Let $\{f_n\}\subseteq\MMMM_+$ be a sequence of subhyperbolic maps converging to $f$ as $n\to\infty$, such that each $f_n$ is connected to $f_0$ by a finite chain of relative hyperbolic components.
	
	By property (a) and Lemma \ref{lem:stable}, for all sufficiently large $n$, the external rays pairs $R_{f_n}(\theta_x^{\pm})$ and $R_{f_n}(\theta_y^{\pm})$ land at the  points $x_n$ and $y_n$, respectively, with $x_n\to x$ and $y_n\to y$ as $n\to\infty$. There also exist corresponding domains $W_{n}^{(i)}$ and $W_n$ for $f_n$,  perturbed from $W^{(i)}$ and $W$ respectively, such that for $i=1,2$ and all large $n$,  the map  $f^{m_i}_n: W_{n}^{(i)}\to W_n$ is conformal
	and  $W_n^{(i)}\Subset W_n$.
	
	Fix a sufficiently large $n$, and define the map $F_n:W_{n}^{(1)}\cup W_{n}^{(2)}\to W_n$ by $$F_n(z)=f^{m_i}_n(z)\textup{ for }z\in W_{n}^{(i)}, i=1,2.$$ The non-escaping set of $F_n$ contains uncountably many ray pairs landing at $K_{f_n,\Omega}$, while by Proposition \ref{pro:reduction}, $K_{f_n,\Omega}$ is a maximal Fatou chain and hence contains countably many cut points by Proposition \ref{pro:cluster}. This is a contradiction.
\end{proof}

By Theorem \ref{thm:gf} and Claim 1 in its proof, we immediately obtain the following.

\begin{corollary}\label{coro:finite-step}
	Let $\MMMM_+$ be an extended molecule containing a hyperbolic component $\HHHH$. Then every geometrically finite map  $f\in\MMMM_+$ is connected to $\HHHH$ by a finite chain of relative hyperbolic components. If $f$ is hyperbolic, every relative hyperbolic component in this chain is hyperbolic.
\end{corollary}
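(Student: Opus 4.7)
My plan is to handle the general geometrically finite case by reducing it to the subhyperbolic case, and then invoke the claim established in the proof of Theorem \ref{thm:gf} (``Claim 1''). Let $f_0$ denote the unique postcritically finite map in the primitive hyperbolic component of $\MMMM_+$, whose existence is guaranteed by Theorem \ref{thm:gf}. I split the argument into two stages: first connect $f$ to the relative hyperbolic component $\HHHH_{f_0}$, then connect $\HHHH_{f_0}$ to $\HHHH$.

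For the first stage, if $f$ is subhyperbolic, Claim 1 in the proof of Theorem \ref{thm:gf} applies directly --- $f\in\MMMM_+$ satisfies conditions (1)--(2) of Theorem \ref{thm:gf} --- and produces a finite chain of relative hyperbolic components joining $\HHHH_f$ to $\HHHH_{f_0}$. If instead $f$ is geometrically finite but has parabolic cycles, I would apply Proposition \ref{pro:plumbing1} (simple plumbing) to $f$, obtaining a subhyperbolic $\tilde g\in\CCCC_d$ with $f\in\overline{\HHHH_{\tilde g}}$ together with a continuous surjection $\phi:\C\to\C$ restricting to a topological conjugacy $J_{\tilde g}\to J_f$. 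The quasiconformal path from $\tilde g$ to $f$ preserves the B\"ottcher normalization at infinity, so $\phi$ preserves external angles; moreover, as a homeomorphism of Julia sets, it induces a bijection between bounded Fatou components of $\tilde g$ and those of $f$ compatible with the recursive construction of maximal Fatou chains. Consequently $\tilde g$ inherits properties (1)--(2) from $f$, and Claim 1 yields a finite chain from $\HHHH_{\tilde g}$ to $\HHHH_{f_0}$; together with $f\in\overline{\HHHH_{\tilde g}}$ this completes the first stage.

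The second stage is immediate from Proposition \ref{pro:def}: the hyperbolic component $\HHHH_{f_0}\subseteq\MMMM_+=\MMMM_+(\HHHH)$ is connected to $\HHHH$ by a finite chain of relative hyperbolic components. Concatenating the two chains proves the first assertion. For the hyperbolic refinement, when $f$ is hyperbolic no plumbing is needed, and Proposition \ref{pro:reduction}(4) ensures that the chain from $\HHHH_f$ to $\HHHH_{f_0}$ consists of hyperbolic components. To upgrade the second stage, I would apply Proposition \ref{pro:reduction}(4) to the unique postcritically finite map $h_0\in\HHHH$, producing a hyperbolic chain from $\HHHH$ to $\HHHH_{\tilde f_0}$ for some primitive hyperbolic postcritically finite $\tilde f_0$; the uniqueness clause in Proposition \ref{pro:reduction} together with Theorem \ref{thm:gf} forces $\tilde f_0=f_0$, so the two hyperbolic chains compose into a single hyperbolic chain from $f$ to $\HHHH$.

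The main obstacle I expect lies in verifying that $\tilde g$ really inherits condition (2) of Theorem \ref{thm:gf} from $f$. The preservation of $\lambda_\Q$ is routine once the external-ray normalization is checked. What requires more care is showing, for each infinite gap $\Omega$ of $L(f_0)$, that $\phi^{-1}(K_{f,\Omega})$ equals (not merely is contained in) a maximal Fatou chain of $\tilde g$. This entails unpacking the level-by-level construction of Fatou chains and tracking it through the plumbing surgery, which exchanges parabolic basins of $f$ for attracting basins of $\tilde g$ without altering the Julia-set combinatorics.
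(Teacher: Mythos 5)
Your proposal is correct and takes essentially the same route as the paper: the paper's one-line justification (``by Theorem~\ref{thm:gf} and Claim~1 in its proof'') implicitly performs exactly your reduction --- Theorem~\ref{thm:gf} gives properties (1)--(2) for $f$, simple plumbing (Proposition~\ref{pro:plumbing1}) produces a subhyperbolic map $\tilde g$ with $f\in\overline{\HHHH_{\tilde g}}$ still satisfying (1)--(2), Claim~1 supplies the chain to the primitive component, and Propositions~\ref{pro:def} and \ref{pro:reduction}\,(4) handle the connection to $\HHHH$ and the hyperbolic refinement. The transfer of condition (2) through the plumbing conjugacy, which you rightly flag as the only delicate point, is likewise left implicit in the paper's proof of Theorem~\ref{thm:gf}.
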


\section{Extended molecules are molecules}\label{sec:A}


\begin{theorem}\label{thm:same}
	For any bounded hyperbolic component $\HHHH$, we have $\MMMM(\HHHH)=\MMMM_+(\HHHH)$.
\end{theorem}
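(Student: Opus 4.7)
\medskip

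\noindent\emph{Proof plan.}
The inclusion $\MMMM(\HHHH)\subseteq\MMMM_+(\HHHH)$ is immediate from the definitions, since every bounded hyperbolic component is a relative hyperbolic component and every chain of hyperbolic components is a chain of relative hyperbolic components. For the reverse inclusion, since $\MMMM(\HHHH)$ is closed, the plan is to show that each relative hyperbolic component $\HHHH_f$ appearing in the union defining $\MMMM_+(\HHHH)$ is contained in $\MMMM(\HHHH)$, splitting the argument into two cases according to whether $\HHHH_f$ is hyperbolic.

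When $\HHHH_f$ is a hyperbolic component, every $g\in\HHHH_f$ is hyperbolic and lies in $\MMMM_+(\HHHH)$, so Corollary~\ref{coro:finite-step} supplies a finite chain of hyperbolic components from $\HHHH_f$ to $\HHHH$, giving $\HHHH_f\subseteq\MMMM(\HHHH)$. In the remaining case we fix $g\in\HHHH_f$, necessarily subhyperbolic with critical points in $J_g$, and aim to construct hyperbolic polynomials $g_n\to g$ with each $g_n\in\MMMM(\HHHH)$. By Theorem~\ref{thm:gf} combined with a Riemann--Hurwitz count over the cycles of Fatou domains of $f_0$, every critical point of $g$ in $J_g$ must sit on the boundary of a bounded Fatou domain; moreover, a last critical point of $g$ in $J_g$ exists, because a cycle among critical points of $g$ inside $J_g$ would yield a super-attracting periodic critical point, which would necessarily lie in the Fatou set. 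The plan is then to apply Theorem~\ref{thm:perturbation} iteratively: at each step we select a last critical point in the Julia set of the current subhyperbolic map and perturb it into the adjacent bounded Fatou domain. Statement~(1) of Theorem~\ref{thm:perturbation} guarantees that the remaining critical points retain their position type, so the number of critical points in the Julia set strictly decreases at each step while the boundary-of-Fatou-domain condition is preserved for those that remain, enabling further iteration. After finitely many steps all critical points lie in the Fatou set, and a diagonal extraction produces hyperbolic maps $g_n\to g$.

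To place each $g_n$ in $\MMMM(\HHHH)$, we propagate the chain structure along the iteration: at each step the perturbed map converges to its predecessor, so the two associated relative hyperbolic components are adjacent. Concatenating these adjacencies across the iteration shows that the hyperbolic component containing $g_n$ is connected to $\HHHH_f$, and therefore to $\HHHH$, by a finite chain of relative hyperbolic components; hence $g_n\in\MMMM_+(\HHHH)$. Since $g_n$ is hyperbolic, Corollary~\ref{coro:finite-step} then upgrades this to a chain of hyperbolic components, placing $g_n\in\MMMM(\HHHH)$; the closedness of $\MMMM(\HHHH)$ finally yields $g\in\MMMM(\HHHH)$. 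The main technical obstacle is the bookkeeping of the iterated perturbation: one must verify inductively that the hypotheses of Theorem~\ref{thm:perturbation} (in particular the existence of a last critical point in the Julia set) persist after each step --- which follows from Statement~(1) --- and one must thread the diagonal choice of indices so that the final sequence of hyperbolic maps genuinely converges to $g$.
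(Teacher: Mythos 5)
Your overall architecture agrees with the paper's up to a point: the inclusion $\MMMM(\HHHH)\subseteq\MMMM_+(\HHHH)$ is immediate, Corollary~\ref{coro:finite-step} handles all hyperbolic maps in $\MMMM_+(\HHHH)$, and the remaining task is to approximate each subhyperbolic $g\in\MMMM_+(\HHHH)$ by hyperbolic maps \emph{inside} $\MMMM_+(\HHHH)$, which you (like the paper) do by iterating Theorem~\ref{thm:perturbation} over the last critical points in $J_g$. However, there are two genuine gaps at the crux of the argument.

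First, your claim that ``every critical point of $g$ in $J_g$ must sit on the boundary of a bounded Fatou domain'' does not follow from Theorem~\ref{thm:gf} plus a Riemann--Hurwitz count. That count only places each critical point inside some maximal Fatou chain $K_{g,\Omega}$, and a point of a maximal Fatou chain $K=\overline{\bigcup_k E_k}$ need not lie on any Fatou domain boundary: it can be an endpoint of the chain, i.e.\ a point of $K\setminus\bigcup_k E_k$ (already for the basilica, the $\beta$-fixed point lies in the maximal Fatou chain but on no bounded Fatou domain boundary, and one can arrange a second critical point of a cubic to land there). This matters because your perturbation step ``into the adjacent bounded Fatou domain'' and, implicitly, the quasiconformal-conjugacy statement (5) of Theorem~\ref{thm:perturbation} are only available under that boundary hypothesis.

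Second, and more seriously, the step ``at each step the perturbed map converges to its predecessor, so the two associated relative hyperbolic components are adjacent'' is unjustified. For a fixed index $n$, the perturbed map $f_{i,n}$ is a single point at positive distance from $f_{i-1}$; the convergence $f_{i,n}\to f_{i-1}$ as $n\to\infty$ only puts $f_{i-1}$ in the closure of the \emph{union} $\bigcup_n\HHHH_{f_{i,n}}$, not in $\overline{\HHHH_{f_{i,n}}}$ for any fixed $n$, so no finite chain of relative components is produced. The only mechanism in the paper that yields such adjacency is Theorem~\ref{thm:perturbation}\,(5) (all perturbations lie in one relative hyperbolic component), which again requires $c^*\in\partial U$ for a bounded Fatou domain $U$ and is therefore unavailable here by the first gap. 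The paper's proof takes a different route precisely to avoid this: it uses Theorem~\ref{thm:perturbation}\,(4) to show the number $M$ of maximal Fatou chains carrying postcritical points is preserved, applies Proposition~\ref{pro:reduction} to attach to each hyperbolic perturbation $g_n$ a primitive postcritically finite polynomial $g_n^*$ with $\#P_{g_n^*}=M$, invokes the finiteness of such polynomials to pass to a subsequence with all $g_n^*$ equal, so that all $g_n$ lie in a single extended molecule $\MMMM_+'$, and finally identifies $\MMMM_+'=\MMMM_+$ because $g$ lies in both and the primitive component of an extended molecule is unique (Corollary~\ref{coro:finite-step} and Proposition~\ref{pro:same1}). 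Without some substitute for this identification step, your proposal does not establish that the approximating hyperbolic maps lie in $\MMMM_+(\HHHH)$.
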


\begin{proof}
	For simplicity, we write $\MMMM=\MMMM(\HHHH)$ and $\MMMM_+=\MMMM_+(\HHHH)$.
	It is clear that $\MMMM\subseteq\MMMM_+$. Corollary \ref{coro:finite-step} implies that all hyperbolic maps in $\MMMM_+$ belong to $\MMMM$.
	Thus, it suffices to show that any subhyperbolic polynomial in $\MMMM_+$ can be approximated by hyperbolic ones in $\MMMM_+$
	
	Let $g\in\MMMM_+$ be a subhyperbolic map with last critical points $c_1,\ldots,c_m$  in its Julia set. Then $c_1,\ldots,c_m$ are contained in maximal Fatou chains of $g$. Denote by $M$ the number of maximal Fatou chains of $g$ that contain postcritical points.
	Choose a sequence $\{\epsilon_n\}$ of positive numbers such that $\epsilon_n\to 0$ as $n\to\infty$.
	
	Fix a sufficiently large $n$. Applying Theorem \ref{thm:perturbation} to $(g,c_1)$, we obtain a subhyperbolic map $f_{1,n}\in\CCCC_d$ in the $\epsilon_n/m$-neighborhood of $g$, whose Julia set contains $m-1$ last critical points  $c_2(f_{1,n}),\ldots,c_m(f_{1,n})$, which lie in maximal Fatou chains of $f_{1,n}$.
	Inductively, suppose for some $1\leq i\leq m-1$ that $f_{i,n}$ has been constructed. The application of Theorem \ref{thm:perturbation} to $(f_{i,n},c_{i+1}(f_{i,n}))$ yields a subhyperbolic map $f_{i+1,n}$ in the $\epsilon_n/m$-neighborhood of $f_{i,n}$,  whose Julia set contains $m-i-1$ last critical points  $c_{i+2}(f_{i+1,n}),\ldots,c_m(f_{i+1,n})$, which lie in maximal Fatou chains of $f_{i,n}$.
	The resulting map $g_n:=f_{m,n}$ is  hyperbolic, lying in the $\epsilon_n$-neighborhood of $g$. By Theorem \ref{thm:perturbation}\,(4), the number of maximal Fatou chains of $g_n$  containing postcritical points remains $M$.
	
	Applying Proposition \ref{pro:reduction} to $g_n$, we obtain a hyperbolic primitive postcritically finite polynomial $g_n^*$ such that $\# P_{g_{n}^*}=M$. Note that the polynomials $\{g_n^*\}_n$ have only finitely many possibilities.  By passing to a subsequence, we may assume that all $g_n^*$ coincide. It follows that all hyperbolic polynomials $g_n$ lie in a common extended molecule, denoted by $\MMMM_+'$, and therefore $g\in\MMMM_+'$.
	
	Since $g\in\MMMM_+\cap\MMMM_+'$,   Corollary  \ref{coro:finite-step} implies that  $g$ can be connected to the primitive hyperbolic components in both $\MMMM_+$ and $\MMMM_+'$ by finite chains of relative hyperbolic components. By definition, we have $\MMMM_+=\MMMM_+'$. Thus the sequence of hyperbolic maps $\{g_n\}\subseteq \MMMM_+$ converges to $g$.
\end{proof}

\begin{proof}[Proof of Theorem A]
	It follows directly from Theorems \ref{thm:gf} and  \ref{thm:same}.
\end{proof}

\begin{proof}[{Proof of Theorem B}]
	Let $\MMMM$ and $\MMMM'$ be two molecules, with the unique primitive hyperbolic postcritically finite polynomials  $f_0\in\MMMM$ and $g_0\in\MMMM'$, respectively. Suppose that $g\in\MMMM\cap\MMMM'$. Then Theorem A gives $\lambda_\Q(g_0)\subseteq \lambda_\Q(g)$ and $\lambda_\Q(f_0)\subseteq \lambda_\Q(g)$.\vspace{2pt}
	
	We will show that $\lambda_\Q(g_0)\subseteq \lambda_\Q(f_0)$. Due to Lemma \ref{lem:include}, it suffices to prove that  any leaf   of $L_\Q(g_0)$ avoiding gap boundaries is also a leaf of $L_\Q(f_0)$.
Let $\ell=\overline{\alpha\beta}$ be such a leaf of $L_\Q(g_0)$. Suppose on the contrary that $\ell$ is not a leaf of $L_\Q(f_0)$.\vspace{7pt}

{\bf Claim 1}.	\emph{The geodesic  $\ell$ is disjoint from   any leaf in $L(f_0)$}.  
\begin{proof}[Proof of Claim 1]
Assume not, there exists a leaf $\overline{ab}$ of $L(f_0)$ such that  $\overline{ab}$ and $\overline{\alpha\beta}$ either intersects at one point or have exactly one common endpoint. 

If $a$, and hence $b$, is irrational, then by Corollary \ref{coro:same}, $\overline{ab}$ avoids gap boundaries of $L(f_0)$ and  can be approximated by leaves of $L_\Q(f_0)$ in both components of $\D\setminus \overline{ab}$. So we may assume that $\overline{ab}$ is a leaf in $L_\Q(f_0)$. 
 Applying the latter part of Lemma \ref{lem:lamination-version} to $\overline{\alpha\beta}$, there exists  a sequence $\{\overline{\alpha_n\beta_n}\}_{n\geq1}$ of leaves in $L_\mathbb{Q}(g_0)$ with $\overline{\alpha_n\beta_n} \cap \overline{ab} \neq \emptyset$ for all $n$. Since $(\alpha_n,\beta_n), (a,b) \in \lambda_\mathbb{Q}(g)$, all $R_g(\alpha_n),n\geq1$ have to land at a common point, a contradiction. Hence the claim holds.
 \end{proof}
		
By this claim, $\ell$ is contained in an infinite gap $\Omega(f_0)$ of $L(f_0)$. By the latter part of Lemma \ref{lem:lamination-version}, we can choose another leaf $\ell'$ of $L(g_0)$ that intersects $\Omega(f_0)$.  Let $S_0$ be the strip bounded by $\ell$ and $\ell'$. Applying Lemma \ref{lem:lamination-version} to $g_0$ and $S_0$, there exist strips $S'\subseteq S_0,S,S_1$ and $S_2$ in $\D$, satisfying
		\begin{enumerate}
			\item the edges $\overline{\theta\eta}$ and $\overline{\theta'\eta'}$ of $S'$ are leaves of $L_\Q(g_0)$,  both avoiding gap boundaries in $L(g_0)$;\vspace{1pt}
			\item there exists $l>0$ such that $\sigma_d^l:\partial S'\to\partial S$ is a homeomorphism;\vspace{1pt}
			\item $\overline{S_1}\cap \overline{S_2}=\emptyset$, and for each $i\in\{1,2\}$,
			\begin{itemize}
				\item $S_i$ is compactly contained in $S$ and separates its two edges;
				\item  there exists $m_i>0$ such that $\sigma_d^{m_i}:\partial S_i\to\partial S$ is a homeomorphism.
			\end{itemize}
		\end{enumerate}
		
As $g\in\MMMM$, let $\{f_n\}\subseteq\MMMM$ be a sequence of subhyperbolic maps converging to $g$, each connected to $f_0$ by a finite chain of relative hyperbolic components.
Remember that $(\theta,\eta),(\theta',\eta')\in\lambda_\Q(g)$. Since every critical point and parabolic point of $g$ lies in $K_{g,\Omega(g_0)}$ for some infinite gap $\Omega(g_0)$ of $L(g_0)$,
	the orbits of $\pi_g(\theta)$ and $\pi_g(\theta')$ both avoid the critical points and parabolic cycles of $g$. Then by Lemma \ref{lem:stable}\,(1), we have a large $n$ such that 
			\[\text{$\pi_{f_{n}}(\theta)=\pi_{f_{n}}(\eta)=:z$ and $\pi_{f_{n}}(\theta')=\pi_{f_{n}}(\eta')=:z'$}.\]
		Consequently, every edge of $S$, $S_1$ or $S_2$ corresponds to a point in $\lambda_\mathbb{Q}(f_{n})$. Since $\overline{\theta\eta}$ and $\overline{\theta'\eta'}$ separates the two edges of $S_0$, they both intersect $\Omega(f_0)$. Hence $z,z' \in K_{f_{n},\Omega(f_0)}$.  Moreover, we conclude from Claim 1 that  $\overline{\theta\eta},\overline{\theta'\eta'} \subseteq \Omega(f_0)$.
		
		Define the strip $W'$ in the phase space of $f_{n}$ corresponding to $S'$ such that its boundary consists of the arcs $\overline{R_{f_{n}}(\theta)} \cup \overline{R_{f_{n}}(\eta)}$ and $\overline{R_{f_{n}}(\theta')} \cup \overline{R_{f_{n}}(\theta')}$. Similarly define the strips $W$, $W_1$, and $W_2$ corresponding to $S$, $S_1$, and $S_2$ respectively.
		
		By Statement (3) above, we have $\overline{W_1} \cap \overline{W_2} = \emptyset$, and for each $i \in \{1,2\}$:
		\begin{itemize}
			\item $\overline{W_i} \subseteq W$ separates the two boundary components of $W$;
			\item $f_{n}^{m_i}: W_i \to W$ is a conformal homeomorphism.
		\end{itemize}
		Therefore, as shown in the proof of Theorem~\ref{thm:gf}, $f_{n}$ admits uncountably many external ray pairs that land at pairwise distinct common points and separate the boundary components of $W$.
		
		By Statement (2), $f_{n}^l: W' \to W$ is a homeomorphism. Combined with the preceding discussion, it follows that $K_{f_{n},\Omega(f_0)}$ contains uncountably many cut points. Since $K_{f_{n},\Omega(f_0)}$ is a maximal Fatou chain of $f_{n}$ by Theorem A, this contradicts Proposition~\ref{pro:cluster}.	
	So we conclude that $\ell=\overline{\alpha\beta}$ is a leaf of $L(f_0)$, which completes the proof of $\lambda_\Q(g_0)\subseteq \lambda_\Q(f_0)$. 
	
	By symmetry,  $\lambda_\Q(f_0)\subseteq \lambda_\Q(g_0)$ also holds. Therefore, it follows from Proposition \ref{pro:same1} that $f_0=g_0$, proving Theorem B.
\end{proof}

\section{When is a subhyperbolic map on the boundary?}\label{sec:C}

This section presents the proof of Theorem~C. We begin by introducing some necessary notations and preliminary results.

Let $f\in\CCCC_d$ be a subhyperbolic polynomial. An open arc $\g$ is called a \emph{ray pair} of $f$ if
\[\g=R_f(\theta_+)\cup\{z\}\cup R_f(\theta_-),\]
where $R_f(\theta_\pm)$ are distinct external rays landing at the same point $z$. This point $z$ is called the \emph{landing point of $\g$}.

Let $W$ be a component of $\C\setminus \g$. A disk $\De\subseteq W$ is called   a \emph{local sector} of  $W$ if $\De=D\cap W$ for some  disk $D$ that contains the landing point of $\g$. Furthermore, if
$E\subseteq K_f$ is a full continuum satisfying
\begin{itemize}
	\item the landing point of $\g$ belongs to $E$;
	\item $W$ is disjoint from $E$,
\end{itemize}
we say that $W$ is the \emph{wake of $E$ bounded by $\g$}.

\begin{lemma}\label{pro:wake}
	Let $f\in\CCCC_d$ be a subhyperbolic map. Let $K$ be the maximal Fatou chain of $f$ generated by a fixed Fatou domain of $f$. Suppose that $W$ is the wake of $K$ bounded by a ray pair $\g$, then for every integer $n\geq 1$,
	\begin{enumerate}
		\item $f^n(\g)$ is  a ray pair that bounds a wake of $K$, denoted by $W_n$;
		\item $f^n$ sends a local sector of $W$ homeomorphically onto a local sector of $W_n$;
		\item the ray pair $\g$ is eventually periodic under iteration by $f$.
	\end{enumerate}
\end{lemma}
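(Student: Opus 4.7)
My plan is to prove the three statements in order, with the bulk of the work being the local injectivity in (2) and the preperiodicity in (3). The overall strategy exploits the standard compatibility of $f^n$ with external rays, the local branched-cover structure of $f^n$ at $z$, and the subhyperbolicity hypothesis to force preperiodicity of the landing point.

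For Statement (1), set $\gamma=R_f(\theta_+)\cup\{z\}\cup R_f(\theta_-)$. Since the B\"ottcher coordinate conjugates $f$ to $w\mapsto w^d$, we have $f^n(R_f(\theta))=R_f(d^n\theta)$, and continuity of $f^n$ on each closed ray $R_f(\theta_\pm)\cup\{z\}$ gives that $R_f(d^n\theta_\pm)$ both land at $f^n(z)$. To confirm this is a genuine ray pair, one checks $d^n\theta_+\ne d^n\theta_-\pmod 1$; the case of equality would force $\gamma$ to be pre-critical in a way incompatible with $W$ being a wake (the argument here mirrors the non-degeneration of ray pairs under iteration in \cite{DH1}). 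Next, $f^n(z)\in K$: the maximal Fatou chain is invariant under $f^p$ where $p$ is the period of the generating domain, and one identifies $K$ with its cyclic orbit under $f$ so that $f^n(z)$ belongs to the appropriate element. Finally, $W_n:=f^n(W)$ is disjoint from $K$ because the external rays inside $W$ have arguments in the open arc $(\theta_-,\theta_+)$, whose $\sigma_d^n$-image is an arc whose corresponding rays all land outside $K$ — otherwise one could find a ray in $W$ landing on $K$, contradicting $W\cap K=\emptyset$.

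For Statement (2), work in a small neighborhood $D$ of $z$. Since $f$ is subhyperbolic, $J_f$ is locally connected, so $\pi_f^{-1}(z)$ is finite and the rays landing at $z$ divide $D\setminus\{z\}$ (restricted to the basin of infinity) into finitely many sectors; the rays $R_f(\theta_\pm)$ bound the sector containing the local sector $\Delta=D\cap W$. At $z$, the map $f^n$ has some local degree $e=\deg(f^n,z)\ge 1$, and is locally conformally modeled by $w\mapsto w^e$, so it maps each of the $e$ canonical sectors at $z$ homeomorphically onto a canonical sector at $f^n(z)$. Since $\Delta$ sits inside one such canonical sector (its boundary in $D$ consists of initial segments of $R_f(\theta_\pm)$ on which $f^n$ is injective, together with a short cross-arc we may shrink), the restriction $f^n|_{\Delta}$ is a homeomorphism onto $f^n(\Delta)$, which is exactly a local sector of $W_n$ at $f^n(z)$.

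For Statement (3), the plan is to show that the landing point $z$ is preperiodic, whence $\theta_\pm\in\Q$ and the sequence $\{d^n\theta_\pm\}$ is eventually periodic, making $\gamma$ eventually periodic. To see $z$ is preperiodic, I first argue that $z$ is a cut point of $K_f$: since $J_f$ is locally connected, every external ray lands; if $K_f$ lay entirely on one side of $\gamma$, then all the uncountably many rays in $W$ would be forced to land at $z$, which is impossible as each point of $J_f$ receives only countably many rays. Hence $K_f\setminus\{z\}$ has components on both sides of $\gamma$, so $z$ is a cut point of $K_f$. For a subhyperbolic polynomial, every cut point of $K_f$ is preperiodic: by Proposition~\ref{pro:iterate} every cut point of $J_g$ eventually iterates into the Hubbard tree $T_g$ of the combinatorial model $g$, whose vertices are either postcritical (hence preperiodic, as $P_f\cap J_f$ is finite) or periodic. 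Transporting back via Theorem~\ref{thm:CT} shows $z$ is preperiodic, completing~(3).

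The main obstacle I foresee is Statement~(2): even though the local-degree model $w\mapsto w^e$ makes things look clean, one must carefully rule out the possibility that the wake $W$ straddles two of the $e$ canonical sectors at $z$, which would destroy injectivity. The justification must use that the ray pair $\gamma$ precisely separates $W$ from the rest of $K_f$ at $z$, so the two bounding rays $R_f(\theta_\pm)$ are forced to be adjacent with respect to the local branched covering structure. Establishing this requires combining local connectivity with a careful analysis of how $f^n$ permutes the rays at $z$.
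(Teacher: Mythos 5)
Your proposal has genuine gaps in all three parts, and the argument for~(3) rests on a false fact. The missing ingredient in (1) and (2) is the \emph{stability} of $K$ (that $K$ is a component of $f^{-1}(K)$), which is what the paper's proof runs on. For~(1), your verification that the image wake is disjoint from $K$ pulls a ray landing on $K$ back to a ray in $W$ and concludes that the latter lands on $K$; but the pulled-back landing point only lies in $f^{-n}(K)$, which is strictly larger than $K$, so no contradiction with $W\cap K=\emptyset$ is obtained. (The paper instead lifts a short arc of $K\cap W_1$ ending at $f(z)$ to an arc ending at $z$, and that lift lies in $K$ precisely because $K$ is a component of $f^{-1}(K)$.) Likewise your non-degeneracy claim $d^n\theta_+\neq d^n\theta_-$ is asserted by analogy rather than proved; the actual reason is that the $\deg(f,z)$ rays at $z$ mapping onto $R_f(\sigma_d(\theta_+))$ cut the plane into regions each of which meets $K$ (again by stability), so $W$, being disjoint from $K$, is properly contained in one of them and its second boundary ray cannot be among these preimages. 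For~(2) you correctly identify the crux --- that $W$ must not straddle two fundamental sectors of the local covering at $z$ --- but you explicitly leave it unproved; it is settled by the same stability argument, so (2) is not actually established in your write-up.

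The argument for~(3) is wrong, not merely incomplete. You claim that for a subhyperbolic polynomial every cut point of $K_f$ is preperiodic, citing Proposition~\ref{pro:iterate}; but that proposition only says cut points are eventually iterated \emph{into} the Hubbard tree, not into its finite vertex set, and the edges of the tree contain uncountably many Julia points in general. The claim itself is false: for $z\mapsto z^2-2$ every interior point of $K_f=[-2,2]$ is a cut point and almost none are preperiodic, and one can build subhyperbolic maps with a fixed bounded Fatou domain exhibiting the same phenomenon on a decoration attached to $K$. Note also that $z$ need only be a cut point of $K_f$, not of $K$, so Propositions~\ref{pro:cut-point} and~\ref{pro:cluster} cannot be applied to it. The paper's proof of (3) is entirely different and elementary: defining the width $|W_n|$ as the length of the arc of angles whose rays lie in $W_n$, one has $|W_{n+1}|=d\,|W_n|$ whenever $|W_n|<1/d$, so the widths repeatedly exceed $1/d$ and $\sum_n|W_n|=+\infty$; if $z$ were not preperiodic the wakes $W_n$ would be pairwise disjoint, forcing $\sum_n|W_n|\le 1$. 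You would need to replace your third paragraph with an argument of this kind.
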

\begin{proof}
	We proceed by induction on $n$; the base case $n=1$ suffices.
	
	Let $\g=R_f(\theta)\cup\{z\}\cup R_f(\theta')$ and let $m={\rm deg}(f, z)\geq 1$. Choose $m$ external rays $R_f(\theta)=R_f(\theta_1),\ldots,R_f(\theta_m)$ landing at $z$, such that $f(R_f(\theta_i))=R_f(\sigma_d(\theta))$ for all $i$. Since $K$ is stable, every component of $$\C\setminus\bigcup_{i=1}^m \ov{R_f(\theta_i)}$$ intersects $K$.  As $K\cap W=\emptyset$, the wake $W$ is properly contained in one such component. It follows that $\theta'\notin\{\theta_1,\ldots, \theta_m\}$. So $\g_1:=f(\g)$ is again a ray pair, and $f$ sends a local sector of $W$ homeomorphically onto a local sector of $W_1$,  a component of $\C\setminus\g_1$.
	
	To show that $W_1$ is a wake of $K$, suppose otherwise. Then there exists a small open arc $\beta\subseteq K\cap W_1$ ending at $f(z)$. Lifting $\beta$ via $f$ yields an open arc $\tilde{\beta}$ in a local sector of $W$ ending at $z$. Since $K$ is stable, we have $\tilde{\beta}\subseteq K$. This, however, contradicts the assumption that $W\cap K=\emptyset$. This completes the proof of Statements (1) and (2).
	
	To prove Statement (3), we show that $z$ is preperiodic.  For each $n\geq1$, define the \emph{width} of $W_n$ as the length of the interval $$\{\alpha\in\mathbb{R}/\mathbb{Z}: R_f(\alpha)\subseteq W_n\}.$$
	
	If $|W_n|<1/d$, then $f(W_n)=W_{n+1}$ and $|W_{n+1}|=d|W_n|$. Hence, for each $n$ there exists $m_n>n$ such that $|W_{m_n}|\geq 1/d$, implying $\sum\limits_{n}|W_n|=+\infty$.

	If $z$ were not preperiodic, the wakes $W_n$ would be pariwise disjoint, so $\sum_{n}|W_n|\leq 1$, a contradiction. Therefore, $z$ is preperiodic, and thus $\g$ is eventually periodic.
\end{proof}
Next, we show that the property of critical points being iterated into maximal Fatou chains is invariant under dynamical perturbation given by Theorem  \ref{thm:perturbation}.
\begin{proposition}\label{pro:same}
	Let $f\in\CCCC_d$ be a subhyperbolic map, and $c^*$ be a last critical point of $f$ lying on the boundary of a bounded Fatou domain. Let $g$ be a dynamical perturbation of $(f,c^*)$ as given in Theorem \ref{thm:perturbation}. If every critical point of $f$ is eventually mapped into the maximal Fatou chain generated by a periodic bounded Fatou domain, then the same property still holds for $g$.
\end{proposition}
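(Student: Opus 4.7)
The plan is to transport the hypothesized property from $f$ to $g$ by matching up critical orbits via the combinatorial correspondences in Theorem~\ref{thm:perturbation}, together with an identification of the maximal single-Fatou chains of $f$ with those of $g$. First, by Theorem~\ref{thm:perturbation}(3), $g$ inherits from $f$ the absence of parabolic cycles, so $g$ is itself subhyperbolic. Since $c^*$ lies on the boundary of a bounded Fatou domain, items~(4) and~(5) of Theorem~\ref{thm:perturbation} are in force; item~(4) in particular supplies a bijection $\iota:\EEE_f\to\EEE_g$ between maximal Fatou chains containing critical or postcritical points, preserving both critical-point containment and the dynamical action. Items~(1) and~(3) together induce a further natural bijection $U\mapsto U^g$ between periodic bounded Fatou domains of $f$ and of $g$, since attracting and parabolic cycles correspond with preservation of period.

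The critical point $c^*$ causes no difficulty: item~(1) places $c^*_g$ in the Fatou set, and since $g$ is subhyperbolic the forward orbit of $c^*_g$ enters a periodic bounded Fatou domain $V_g$, which itself lies in $K_{V_g}$. If $c\in C_f$ satisfies $f^j(c)=c^*$ for some $j\ge 1$, then item~(2) gives $g^j(c_g)=c^*_g$, and the same conclusion follows by post-composition. The substantive case is that of $c\in C_f\setminus\{c^*\}$ whose $f$-orbit avoids $c^*$: here item~(2) preserves the preperiodic combinatorics of the critical orbit, so that $f^{m+l}(c)=f^m(c)$ forces $g^{m+l}(c_g)=g^m(c_g)$. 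The hypothesis supplies $k\ge 0$ and a periodic bounded Fatou domain $U_c$ of $f$ with $f^k(c)\in K_{U_c}$, and what remains is to show $g^k(c_g)\in K_{U_c^g}$.

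The main obstacle is establishing the correspondence $K_{U_c}\leftrightarrow K_{U_c^g}$, because item~(4) only matches maximal Fatou chains in the sense of Proposition~\ref{pro:max} rather than maximal single-Fatou chains of Definition~\ref{def:chain}. I would argue this by induction on the level of the single-chain construction: the level-$0$ piece $\overline{U_c}$ pairs with $\overline{U_c^g}$ by the Fatou-domain bijection, and at each higher level the new component of $f^{-p}$-preimage containing the previous one is determined combinatorially by how the new Fatou-domain pieces attach at pre-repelling periodic boundary points. These attachment data are preserved by the perturbation: rational external rays landing at pre-repelling periodic points are stable by Lemma~\ref{lem:stable}(1), and by Proposition~\ref{pro:chain1} the continuum $K_{U_c}$ contains no periodic cut points, which prevents the inductively grown components from merging with neighboring max single-chains. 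Once the correspondence $K_{U_c}\leftrightarrow K_{U_c^g}$ is in hand, the critical-containment part of item~(4), applied to the maximal Fatou chain in $\EEE_f$ containing $K_{U_c}$, combined with item~(2), locates $g^k(c_g)$ inside the correct $K_{U_c^g}$, completing the proof.
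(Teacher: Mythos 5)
Your overall strategy is genuinely different from the paper's, and unfortunately it has a gap exactly at the step you flag as "the main obstacle." The paper does not attempt a forward, level-by-level identification of the single-generated chains $K_{U_c}$ and $K_{U_c^g}$. Instead it argues by contradiction: if some $g^l(c_g)$ lies outside the chain $E_g$ generated by the corresponding fixed Fatou domain $U_g$, then $g^l(c_g)$ sits in a wake of $E_g$ bounded by a ray pair which, by Lemma~\ref{pro:wake}, is eventually periodic; after one application of $g$ its landing point is a repelling \emph{fixed} point $w_g\in E_g$. Passing to the limit along the quasiconformal deformations $g_n\to f$ furnished by Theorem~\ref{thm:perturbation}\,(5), and using Lemma~\ref{lem:stable}\,(1) at this repelling fixed point, one obtains a fixed ray pair for $f$ that separates $f^l(c)$ from $U$ inside $E$; hence $w$ is a \emph{periodic} cut point of $E$, contradicting Proposition~\ref{pro:chain1}. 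The periodicity of the separating ray pair is the engine of the whole argument: it is what makes the ray configuration stable under the limit and what produces a contradiction with the "no periodic cut points" property.

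Your proposed induction does not supply a substitute for this. The assertion that "the new component \dots is determined combinatorially by how the new Fatou-domain pieces attach at pre-repelling periodic boundary points," with stability via Lemma~\ref{lem:stable}\,(1), misidentifies where single-generated chains actually grow: by Proposition~\ref{pro:chain1}, $E_U$ exceeds $\overline{U}$ precisely when $\partial U$ carries a critical point, so the attachment loci are (preimages of) critical points --- in particular preimages of $c^*$ itself --- and Lemma~\ref{lem:stable}\,(1) explicitly excludes precritical landing points from its ray-identification conclusion. Worse, Theorem~\ref{thm:perturbation}\,(5) states that the fibers of the semiconjugacy $\phi$ split exactly over the backward orbit of $c^*$, so the combinatorial "attachment data" are \emph{not} preserved there; this is precisely the mechanism by which $K_{U_c^g}$ could fail to reach $g^k(c_g)$ even though $K_{U_c}$ reaches $f^k(c)$. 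Your appeal to the absence of periodic cut points only rules out merging of neighboring chains, not this shrinking. Finally, the closing step via Theorem~\ref{thm:perturbation}\,(4) only controls the coarse maximal chains of Proposition~\ref{pro:max}, which may strictly contain several single-generated chains plus Julia points outside all of them, so knowing which element of $\EEE_g$ contains $c_g$ cannot locate $g^k(c_g)$ inside the specific $K_{U_c^g}$. To repair the argument you would essentially need the paper's wake/periodic-cut-point mechanism or an equivalent.
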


\begin{proof}
	Assume, by contradiction, that there exists a critical point
	$c_g$ of $g$ that is never iterated into any maximal Fatou chain generated by a bounded Fatou domain.
	
	By Theorem \ref{thm:perturbation}\,(5), there exists a sequence $\{g_n\}$ of polynomials, obtained via quasiconformal deformations of $g$ such that $g_n\to f$ as $n\to\infty$.
	For notational simplicity, if $A_g$ denotes an object associated with $g$ (e.g. an external ray, a preperiodic point, etc.), we write $A_n$ for the corresponding object under $g_n$, and $A:=\lim\limits_{n\to\infty} A_n$ for the corresponding object under $f$.
	
	By the hypothesis of the proposition, there exists a periodic bounded Fatou domain $ U$ of $f$ and an integer $l\geq0$ such that $f^l(c)$ belongs to the maximal Fatou chain $E$ generated by $ U$. Without loss of generality, we may assume that $f( U)= U$.
	
	Let $ U_g$ be the attracting Fatou domain of $g$ corresponding to $ U$, and let $E_g$ be the maximal Fatou chain generated by $ U_g$. Fix an external ray $R_g(\alpha)$ lands at $g^l(c_g)$. By assumption, $g^l(c_g)\notin E_g$. So  $\overline{R_g(\alpha)}$ is contained in a wake $W_g$ of $E_g$ bounded by a ray pair $\g_g=R_g(\theta)\cup \{z_g\}\cup R_g(\theta')$. Since $\g_g$ is $g$-preperiodic by Lemma \ref{pro:wake}, we may assume that
	$\g_g':=g(\g_g)$ is fixed by $g$. Then $w_g:=g(z_g)\in E_g$ is a repelling fixed point of $g$. Denote by $W_g'$ the wake of $E_g$ bounded by $\g'_g$. 

	By Lemma \ref{lem:stable}\,(1), $\g_n\to \g=R_f(\theta)\cup\{z\}\cup R_f(\theta')$ as $n\to\infty$. Then $\g$ separates $R_f(\alpha)$ from $ U$. We now verify the following two properties:
	\begin{enumerate}
		\item $f$ sends a local sector of $W$ homeomorphically onto a local sector of $W'$;
		\item $\overline{R_f(\alpha)}$ and $ U$ lie in distinct components of $\mathbb{C}\setminus\g.$
	\end{enumerate}
	
	To show (1), let $d_0:={\rm deg}(g, z_g)\geq 1$. Then there exists $d_0$ external rays $$R_g(\theta)=R_g(\theta_1),\ldots, R_g(\theta_{d_0})$$
	landing at $ z_g$, such that $g$ maps each of them onto $R_g(\sigma_d(\theta))$. Since $g$ maps a local sector of $W_g$ homeomorphically to that of $W_g'$, the wake $W_g$ lies entirely in one component of $$\C\setminus\bigcup_{j=1}^{d_0}\overline{R_g(\theta_j)}.$$
	
	By Theorem \ref{thm:perturbation}, ${\rm deg}(f,  z)=d_0$. Combined with Lemma \ref{lem:stable}, the $d_0$ rays $R_f(\theta_j)$ are all preimages under $f$ of $R_f(\sigma_d(\theta))$ that land at $z$. Since $W$ lies in a component of $\C\setminus\bigcup_{j=1}^{d_0}\overline{R_f(\theta_j)}$, property (1) holds.\vspace{2pt}
	
	To show (2), it suffices to prove that $z$ is not the landing point of $R_f(\alpha)$. Suppose otherwise; then $R_f(\sigma_d(\alpha))$ lands at the repelling fixed point $w$ of $f$. Here $w=\lim_{n\to\infty} w_n$ and $\{w\}=\gamma'\cap J_f$. Since $R_f(\sigma_d(\alpha))\subseteq W'$, it follows from Lemma \ref{lem:stable} that $R_g(\sigma_d(\alpha))\subseteq W_g'$ and  it lands at $w_g$. Hence, there exists $\beta\neq \alpha$ such that $\sigma_d(\beta)=\sigma_d(\alpha)$ and $R_g(\beta)\subseteq W_g$ lands at $z_g$.  Consequently, $R_f(\beta)\subseteq W$ lands at $z$.
	Then the two distinct rays $R_f(\alpha)$ and $R_f(\beta)$ in $W$  are both mapped onto $R_f(\sigma_d(\alpha))$ by $f$, contradicting property (1).\vspace{2pt}

	We now claim that the fixed point $w$ is a cut point of the maximal Fatou chain $E$ generated by $U$. Indeed,  note first that $R_f(\alpha)$ lands at $f^l(c)$. Since both $f^l(c)$ and $ U$ are contained in $E$, by properties (1) and (2), we may choose a point $z_*\in E\cap W$ in a local sector of $W$ such that $f(z_*)\in E\cap W'$. Then $\g'$ separates $f(z_*)$ from $ U$. So $w$ disconnects $E$.
	
	This claim contradicts Proposition \ref{pro:chain1}. So the  proposition is proved.
\end{proof}

\begin{proof}[Proof of Theorem C]
	
	Suppose $J_f\cap C_f\neq \emptyset$ and each critical point of $f$ is iterated to the maximal Fatou chain generated by a periodic bounded Fatou domain. By Proposition \ref{pro:chain1}, at least one critical point lies on the boundary of a bounded Fatou domain.
	
	Applying Theorem \ref{thm:perturbation}  and Proposition \ref{pro:same} successively, we obtain a sequence of subhyperbolic polynomials $f=f_0,\ldots,f_m\in \CCCC_d$
	such that, for each $i=1,\ldots,m$,
	\begin{enumerate}
		\item the map $f_{i}$ is a dynamical perturbation of $(f_{i-1},c_{i-1})$, where $c_{i-1}$ is a last critical point of $f_i$ lying on the boundary of a bounded Fatou domain;
		\item each critical point of $f_i$ is iterated to the maixmal Fatou chain generated by a periodic bounded Fatou domain;
		\item the map $f_m$ has no critical points on boundaries of its bounded Fatou domains.
	\end{enumerate}
	It follows that $f_m$ is hyperbolic, and belongs to a bounded hyperbolic component $\HHHH$.
	
	By Theorem \ref{thm:perturbation}\,(5), for each $i=1,\ldots,m$, there exists a sequence $\{f_{i,n}\}_{n}$ of subhyperbolic maps such that each $f_{i,n}$ (for $n\geq1$) is quasiconformal conjugate to $f_i$, and $f_{i,n}\to f_{i-1}$ as $n\to\infty$. In particular, $\{f_{m,n}\}_{n}\subseteq\HHHH$ and hence $f_{m-1}\in\partial \HHHH$.
	
	According to \cite[Theorem 1.1]{GYZ}, if a subhyperbolic rational map $g_*$ lies in the boundary of a hyperbolic component $\HHHH_*$, then any quasiconformal deformation of $g_*$ remains in $\partial \HHHH_*$. Hence $\{f_{m-1,n}\}_{n}\subseteq\partial\HHHH$, and thus $f_{m-2}\in\partial\HHHH$. By induction, it follows that $f\in\partial\HHHH$.
	\vspace{3pt}
	
	For the necessity, let $\HHHH$ be a bounded hyperbolic component with a sequence $\{f_n\}$ converging to $f\in\partial\HHHH$. Fix a critical point $c$ of $f$. It then suffices to show that $c$ is eventually mapped into a maximal Fatou chain generated by a periodic Fatou domain.

	Let $c_n$ be a critical point of $f_n$ with $c_n\to c$ as $n\to \infty$.
	Since $f_n\in\HHHH$, there exists a periodic attracting Fatou domain $ U_n$ and a minimal integer $l\geq0$ such that $f_n^l(c_n)\subseteq  U_n$. Without loss of generality, we may assume $f_n( U_n)= U_n$ for all $n\geq1$. Let $z_n$ be the attracting fixed point in $ U_n$. Then $z:=\lim_{n\to\infty} z_n$ is an attracting fixed point of $f$.
	Let $ U$ be the Fatou domain of $f$ containing $z$, and $K$ be the maximal Fatou chain of $f$ generated by $ U$.
	By Proposition \ref{pro:renormalization}, given any $\epsilon>0$, there exist nested disks $W\Subset V$ containing $K$ and lying in an $\epsilon$-neighborhood of $K$, such that $f:W\to V$ is a branched covering of degree ${\rm deg}(f|_K)$.
	
	We  claim that for every sufficiently large $n$, the entire Fatou domain $ U_n$ is contained in $V$. To prove this claim,  choose a small neighborhood $D\subseteq U$ of $z$ such that $f(D)\Subset D$. Then there exists $N>0$ such that $z_n\in D\subseteq  U_n$ and $f_n(D)\Subset D$ for all $n>N$. Moreover, by increasing $N$ if necessary, we may assume that for each $n>N$, there exists a disk $W_n\Subset V$ such that $f_n:W_n\to V$ is a branched covering of  degree ${\rm deg}(f|_K)$.
	
	Note that $ U_n=\bigcup_{k\geq1} D_n^k$, where $D_n^k$ is the component of $f^{-k}_n(D)$ containing $z_n$. Since $D\subseteq V$ and $f_n:W_n\to V$ is a branched covering, it follows that $D_n^1\subseteq W_n\Subset V$. Inductively, each $D_n^k$ is contained in $V$. Thus, $ U_n\subseteq V$. This proves the claim.
	
	From the claim, we have $f_n^l(c_n)\in V$ for all $n>N$. This implies $f^l(c)\in\ov{V}$. Since $V$ is contained in an $\epsilon$-neighborhood of $K$ and $\epsilon>0$ is arbitrary, it follows that $f^l(c)\in K$. This completes the proof of the necessity.
\end{proof}




\begin{thebibliography}{FF}
	\bibitem[BOPT]{BOPT}
	\textsc{ A. Blokh, L. Oversteegen, R. Ptacek, V. Timorin},
	\textit{Quadratic-like dynamics of cubic polynomials}, Comm. Math. Phys., 341 (2016), 733--749.
	
	\bibitem[BH]{BH}
	\textsc{B. Branner, J. Hubbard},
	\textit{The iteration of cubic polynomials part I: the global topology of parameter space},
	Acta Math., 160 (1988) 143--206.
	
	\bibitem[CGZ]{CGZ}
	\textsc{G. Cui, Y. Gao, J. Zeng,}
	\textit{Invariant graphs in Julia sets and decompositions of rational maps},
	arXiv:240812371.
	
	\bibitem[CPT]{CPT}
	\textsc{G. Cui, W. Peng, L. Tan,}
	\textit{On a theorem of Rees-Shishikura},
	Ann. Fac. Sci. Toulouse Math. (6) 21 (2012), no. 5, 981--993.
	
	\bibitem[CT1]{CT1}
	\textsc{G. Cui, L. Tan,}
	\textit{A characterization of hyperbolic rational maps},
	Invent Math, {\bf 183} (2011),  451--516.
	
	
	
	\bibitem[CT2]{CT2}
	\textsc{G. Cui, L. Tan,}
	\textit{Hyperbolic-parabolic deformations of rational maps},
	SCIENCE CHINA Mathematics, {\bf 61} (2018), no. 12, 2157--2220.
	
	\bibitem[CT3]{CT3}
	\textsc{G. Cui, L. Tan,}
	\textit{Distortion control of conjugacies between quadratic polynomials},
	Sci China Math, {\bf 53} (2010), 625--634.
	
	\bibitem[DH1]{DH1}
	\textsc{A. Douady, J. H. Hubbard,}
	\emph{Exploring the Mandelbrot set: The Orsay Notes}, available at \url{http:www.math.cornell.edu/~hubbard/OrsayEnglish.pdf}
	
	\bibitem[DH2]{DH2}
	\textsc{A. Douady and J. Hubbard}, On the dynamics of polynomial-like mappings.
	Ann. Sci. \'{E}cole Norm. Sup. Paris,
	{18} (1985), no. 4, 287--343.
	
	\bibitem[DH3]{DH3}
	\textsc{A. Douady, J. H. Hubbard,}
	\textit{A proof of Thurston's topological characterization of
		rational functions}, Acta Math., 171 (1993), 263--297.
	
	\bibitem[DHS]{DHS}
	\textsc{D. Dudko, M. Hlushchanka, D. Schleicher,}
	\textit{ A canonical decomposition of postcritically finite rational maps and their maximal expanding quotients}, arXiv:2209.02800v1.
	
	\bibitem[G]{G}
	\textsc{Y. Gao,}
	\emph {On the core entropy of Newton maps},
	Sci. China Math. {\bf 67} (2024), no. 1, 77--128.
	
	
	
	\bibitem[GYZ]{GYZ}
	\textsc{Y. Gao, L. Yang, J. Zeng,}
	\emph{Subhyperbolic rational maps on boundaries of hyperbolic components}, Discrete Contin. Dyn. Syst. 42 (2022), no. 1, 319–326.
	
	\bibitem[GT]{GT}
	\textsc{Y. Gao, G. Tiozzo}
	\emph{The core entropy for polynomials of higher degree}, J. Eur. Math. Soc. 24 (2022), 2555--2603.
	
	
	\bibitem[GZ]{GZ}
	\textsc{Y. Gao, J. Zeng}
	\textit{The landing of parameter rays at non-recurrent
		critical portraits},
	Sci. China Math., {\bf 61} (2018), no. 12, 2267--2282.
	
	\bibitem[Ka]{Ka}
	\textsc{T. Kawahira,}
	\textit{Semiconjugacies between the Julia sets of geometrically finite rational maps}, Ergodic
	Theory Dynam. Systems, {\bf 23} (2003), no. 4, 1125--1152.
	
	\bibitem[K]{K}
	\textsc{J. Kiwi},
	\textit{Rational laminations of complex polynomials}, In M. Lyubich, J. W. Milnor, and Y. N. Minsky, editors, {Laminations and Foliations in Dynamics, Geometry and Topology.} volume 269 of Contemp. Math., pages 111--154. Amer. Math. Soc., Providence, RI, 2001.
	
	\bibitem[La]{La} P. Lavaurs. {\it Systemes dynamique holomorphe, Explosion de points periodique paraboliques.} Th\'ese de doctorat, Universit\'e Paris-Sud in Orsay (1989).
	
	\bibitem[Luo]{Luo}
	\textsc{Y. Luo}
	\textit{On geometrically finite degenerations I: boundaries of main hyperbolic components}, J. Eur. Math. Soc. 26 (2024), 3793–3861.
	
	\bibitem[LP]{LP}
	\textsc{Y. Luo, I. Park}
	\emph{Polynomials with core entropy zero}, arXiv:2205.13704.
	
	
	\bibitem[Poi1]{Poi1}
	\textsc{A. Poirier,}
	\emph{Critical portraits}, Fund. Math., 203 (2009), 193–248.
	
	\bibitem[Poi2]{Poi}
	\textsc{A. Poirier,}
	\emph{Hubbard trees}, Fund. Math., 208 (2010), 107–163.
	
	
	\bibitem[Mc1]{Mc}
	\textsc{C. McMullen,}
	{\it Complex Dynamics and Renormalization}. Annals of Mathematics Studies, {\bf 135} (1994), Princeton University Press.
	
	\bibitem[Mc2]{Mc2}
	\textsc{C. McMullen},
	\textit{ Automorphisms of rational maps}, In: Holomorphic Functions and Moduli I, 31--60.
	Springer, New York (1988)
	
	
	\bibitem[Mil1]{Mil}
	\textsc{J. Milnor,}
	{\it Dynamics in One Complex Variable}. Princeton University Press 2006.
	
	
	\bibitem[Mil2]{M2} J. Milnor. Hyperbolic components (with an appendix by A. Poirier). {\it Contemp. Math.} 573, 2012, 183-232.
	
	\bibitem[Tan]{Tan}
	\textsc{L. Tan},
	\textit{Stretching rays and their accumulations, following Pia Willumsen}, in
	\textit{Dynamics on the Riemann sphere}, European Mathematical Society, 2006, 183--208.
	
	\bibitem[Th]{Th}
	\textsc{W. Thurston,}
	\textit{On the geometry and dynamics of iterated rational maps},
	in
	\textit{Complex dynamics},
	{edited by D. Schleicher and N. Selinger and with an
		appendix by Schleicher},
	A K Peters, Wellesley, MA,
	2009,
	3--137.
\end{thebibliography}
\end{document}